\newtheorem{theorem}{Theorem}
\newtheorem{definition}[theorem]{Definition}
\newtheorem{lemma}[theorem]{Lemma}
\newenvironment{proof}[1][Proof]{\noindent\textbf{#1.} }{\ \rule{0.5em}{0.5em}}
\begin{document}

\title{\textbf{Model Selection for independent not identically distributed
observations based on R\'{e}nyi's pseudodistances}}
\author{Angel Felipe$^{1},$ Maria Jaenada$^{1},$ Pedro Miranda$^{1}$ and Leandro Pardo$^{1}$ \\
%EndAName
$^{1}${\small Department of Statistics and O.R., Complutense University of
Madrid, Spain}}
\date{}
\maketitle

\begin{abstract}
Model selection criteria are rules used to select the best statistical model among a set of candidate models, striking a trade-off between goodness of fit and model complexity. Most popular model selection criteria measure the  goodness of fit trough the model log-likelihood function, yielding to non-robust criteria.
This paper presents a new family of robust model selection criteria for independent but not identically distributed observations (i.n.i.d.o.) based on the R\'{e}nyi's pseudodistance  (RP).
The RP-based model selection criterion is indexed with a tuning parameter $\alpha$ controlling the trade-off between efficiency and robustness.
%The goal is to find a criterion with good properties in terms of robustness.
Some theoretical results about the RP criterion are derived and the theory is applied to the multiple linear regression model, obtaining explicit expressions of the model selection criterion.
Moreover, restricted models are considered and explicit expressions under the multiple linear regression model with nested models are
accordingly derived.
Finally, a simulation study empirically illustrates the robustness advantage of the method.

{\it Keywords: } R\'enyi's pseudodistance, robustness, restricted model, multiple linear regression model.
\end{abstract}

\section{Introduction}

Consider a set of real-life observations coming from an unknown distribution to be statistically modeled.
%For inferential purposes, the true model underlying the data is assumed to belongs to a parametric family. However,
Different candidate models may be assumed to fit the data and so a natural question arises as to how to choose the model that best fits the data.
If the assumed model is too simple, with few number of parameters, it may not capture some important patterns and relationships in the data.
In contrast, if the assumed model is too complex with large number of parameters, the estimated model parameters may over-fit the observed data (including possible sample noise), then resulting in a poor performance when the model is applied to new data.
%These data are supposed to come from an unknown model and we have several candidate models that could fit the data. Hence,
A model selection criterion is a rule used to select a statistical model among a set of candidates based on the observed data. It defines an objective criterion function quantifying the compromise between goodness of fit and model complexity, typically measured through an expected dissimilarity or divergence. Then, the dissimilarity measure needs to be minimized to select the model with the best trade-off.
%Then, the model selection criterion minimizes the objetive
In other words, model selection criteria rely on a measure of fairness between a candidate model and the true model (i.e., the probability distribution generating the data).

The Akaike information criterion (AIC)  is one of the most widely known and used in statistical practice model selection criterion.
It was developed by Akaike \cite{aka73, aka74} as the first model selection criterion in the statistical literature. The AIC estimates the expected Kullback-Leibler divergence \cite{kule51} between the true model underlying the data and a fitted candidate model, and selects the model with minimum AIC. Of course, the true model underlying the data is generally unknown and so an empirical estimate obtained from the observed data is used.

Following similar ideas than the AIC, several other model selection criteria have been proposed in the literature.
For example, Schwarz in \cite{sch78} developed the ``Bayesian information criterion" (BIC), which  imposes a stronger penalty for model complexity than AIC.
Also derived from AIC, Hurvich and Tsai \cite{huts89, huts93, huts95} studied the bias problem of the AIC and corrected it with a new criterion called ``Corrected Akaike information criterion" (AIC$_{C}$).
This criterion tries to cope with the fact that the AIC is only asymptotically unbiased and hence, the bias may be important when the sample size is not large enough and the number of parameters is large. Indeed, under small samples sizes the AIC tends to overfitting the observed data.
 Konishi and Kitagawa \cite{koki96} extended the framework in which AIC has been developed to a general framework, including other estimation methods than maximum likelihood  to fit the assumed candidate model. The resulting model selection criterion was called the ``generalized information criterion" (GIC).
 The penalty term of GIC reduces to that of ``Takeuchi information criterion" (TIC) developed by Takeuchi in \cite{tak76} when the fitting method is maximum likelihood.
 Finally, Bozdogon \cite{boz87} proposed another variant of AIC, called CAIC, that corrected its lack of consistency.  Interesting surveys about model selection criteria can be found in \cite{rawu01, cane11}.

Most of the previous procedures measure the fairness in terms of the Kullback-Leibler divergence. However, some other divergence measures have been explored, extending the methods with better robustness properties.
For example, \cite{maleka09} considered the density power divergence (DPD) \cite{bahahjjo98}  to define a robust model selection criterion. Similarly, Toma et al. \cite{tokatr20} introduced  another robust criterion for model selection based on the R\'{e}nyi pseudodistance (RP) \cite{johjhaba01}.

All the previous criteria assume that the observations are independent and identically distributed. A new problem appears if the observations are independent but not identically distributed (i.n.i.d.o.). In this context, Kurata and Hamada \cite{kuha18} considered a criterion based on DPD, extending the theory of \cite{maleka09}. The main purpose of this paper is to introduce a new robust model selection criterion in the context of i.n.i.d.o. based on RP, thus extending the methods of \cite{tokatr20}.

%Besides, in recent years there is a growing debate to establish the behavior of RP and DPD in terms of robustness in different statistical problems. This paper aims to shed light about this question in the particular case of model selection.

The rest of the paper goes as follows. In Section 2 we introduce RP for i.n.i.d.o. and we present some theoretical results necessary for next sections. The criterion based on RP is considered in Section 3 and an application to multiple linear regression model (MLRM) is presented.  Section 4 studies the restricted case, where some additional conditions on the parameter space are imposed. The corresponding explicit expressions for the MLRM comparing a model with many parameters to other with a reduced number of parameters are derived. In Section 5 a simulation study illustrates the robustness of the proposed criterion and compare it with other model selection criteria. Section 6 deals with a real data example. Some final conclusions are presented in Section 7.

\section{R\'{e}nyi's pseudodistance for independent but not identically distributed observations}

%Let us start explaining the problem we want to treat in this section.
 Let $Y_{1},...,Y_{n}$ be i.n.i.d.o. observations, where each $Y_i$ has true probability distribution function $G_{i}, i=1,...,n,$ and probability density function $g_{i}, i=1,...,n,$ respectively.
 For inferential purposes, it is assumed that the true density function $g_{i}$ could belong to a parametric family of densities,
 % We are interested in modeling $g_{i}(y), i=1,...,n$ by the density functions
  $f_{i}(y,\boldsymbol{\theta }), i=1,...,n,$ with $\boldsymbol{\theta } \in \Theta \subset \mathbb{R}^{p}$  a common model parameter for all the density functions. % $f_{i}(y,\boldsymbol{\theta }),$
In the following, we shall denote by $F_{i}(y, \boldsymbol{\theta })$ the distribution function associated to the density function $f_{i}(y,\boldsymbol{\theta }), i=1, ..., n.$

The value of $\boldsymbol{\theta}$ that best fits the original distributions $g_1, ..., g_n$, would  naturally minimize  some kind of distance between the true and assumed densities, $(g_1(y), ..., g_n(y))$ and $(f_1(y, \boldsymbol{\theta }), ..., f_n(y, \boldsymbol{\theta })).$
Here, we will use the family of RP divergence measures defined in \cite{johjhaba01} as measure of closeness between both sets of densities.

\begin{definition}
Consider $f(\cdot ,\boldsymbol{\theta }), g(\cdot)$ two probability density functions. The {\bf R\'enyi's pseudodistance} (RP) between $f$ and $g$ of tuning parameter $\alpha >0$ is defined by

\begin{equation}\label{2.1}
	\begin{aligned}
		R_{\alpha }\left( f(\cdot ,\boldsymbol{\theta }),g(\cdot )\right) =
		&\frac{1}{	\alpha +1}\log \left( \int f(y,\boldsymbol{\theta })^{\alpha+1}dy\right)
		-\frac{1}{\alpha }\log \left( \int f(y,\boldsymbol{\theta })^{\alpha }g(y)dy\right)\\
		& +\frac{1}{\alpha \left( \alpha +1\right) }\log \left( \int g(y)^{\alpha
			+1}dy\right).
	\end{aligned}
\end{equation}
\end{definition}

The tuning parameter $\alpha $ controls the trade-off between efficiency and robustness. Hence, for small values of $\alpha $ (in the limit $\alpha =0$), the corresponding results will be more efficient while less robust. On the other hand, for large values of $\alpha $, the results will lead to robustness but with a loss of efficiency. 
 
The RP divergence defined in Eq. (\ref{2.1}) is always positive and it  only reaches the zero when both densities coincide. Then,
 the best model parameter value approximating the underlying distribution would naturally minimize Eq. (\ref{2.1}) in $\boldsymbol{\theta }\in \boldsymbol{\Theta }.$ Indeed,  if the true distribution $g$ belongs to the assumed parametric model with true parameter $\boldsymbol{\theta}_0$,  the global minimizer of the RP is necessarily $\boldsymbol{\theta} = \boldsymbol{\theta}_0.$

At $\alpha =0,$ the corresponding {\bf R\'enyi's pseudodistance} between $f$ and $g$ can be defined by taking continuous limits as follows
\begin{eqnarray}
	R_{0}\left( f(\cdot ,\boldsymbol{\theta }),g(\cdot )\right) & = & \lim_{\alpha \downarrow 0}R_{\alpha }\left( f(y,\boldsymbol{\theta }),g(y)\right) = \int g(y) \log {g(y)\over f(y, \boldsymbol{\theta })} dy \nonumber \\
	& = & \int g(y) \log g(y) dy - \int g(y) log f(y,\boldsymbol{\theta }) dy.
	\label{2.3}
\end{eqnarray}

Hence, $R_0\left( f(\cdot ,\boldsymbol{\theta }),g(\cdot )\right) $ coincides with the Kullback-Leibler divergence measure between $g$ and $f$.
The RP have been applied in many different statistical models with very promising results in terms of robustness with a small loss of efficiency.
For example,  \cite{fueg08} considered the RP divergence under the name of $\gamma $-cross entropy. Additionally, Toma and Leoni-Auban \cite{tole10} defined new robust and efficient measures based on RP.
In \cite{cajapa22}, Wald-type tests based on RP were developed in the context of MLRM, and were extended later in \cite{japa22} for the generalized multiple regression model. Moreover, in \cite{japa22} a robust approach for comparing two dependent normal populations via a Wald-type test based on RP was carried out. In \cite{jamipa22} the restricted MRPE was considered and their asymptotic properties studied; moreover, an application to Rao-type tests based on the restricted RP was there developed.

 Note that the last term in Eq. (\ref{2.1})
%\begin{equation*}
%k=\frac{1}{\alpha \left( \alpha +1\right) }\log \left( \int g(y)^{\alpha
%+1}dy\right)
%\end{equation*}
does not depend on $\boldsymbol{\theta }.$ Hence, the minimizer of the RP measure can be obtained,
 for $\alpha >0,$  %we aim to find $\boldsymbol{\theta }\in \boldsymbol{\Theta }$
 by minimizing the surrogate function
\begin{equation}
\frac{1}{\alpha +1}\log \left( \int f_{i}(y,\boldsymbol{\theta })^{\alpha
+1}dy\right) -\frac{1}{\alpha }\log \left( \int f(y,\boldsymbol{\theta })^{\alpha } g(y) dy \right) . \label{2.2}
\end{equation}
The above expression can be rewritten using logarithm properties as
\begin{equation*}
- \frac{1}{\alpha }\log \frac{\int f(y,\boldsymbol{\theta })^{\alpha } g(y)dy}{\left( \int f(y,\boldsymbol{\theta })^{\alpha +1}dy\right) ^{\frac{\alpha }{\alpha +1}}},
\end{equation*}
and thus minimizing $R_{\alpha }(f(\cdot , \boldsymbol{\theta }), g(\cdot )) $ in $\boldsymbol{\theta }$, for $\alpha >0,$ is
equivalent to minimize
\begin{equation}\label{Eq3}
V_{\alpha}^\ast\left( \boldsymbol{\theta }\right) = - \frac{\int f(y,\boldsymbol{\theta })^{\alpha } g(y) dy}{\left( \int f(y,\boldsymbol{\theta })^{\alpha +1}dy\right)
^{\frac{\alpha }{\alpha +1}}}.
\end{equation}

Similarly, for $\alpha =0,$ we have that the first term in Eq. (\ref{2.3}) does not depend on $\boldsymbol{\theta}$ and hence, minimizing $R_{0}\left( f(\cdot ,\boldsymbol{\theta }),g(\cdot )\right) $ is equivalent to minimizing
\begin{equation}\label{Nueva}
V^*_{0}\left( \boldsymbol{\theta }\right) = - \int g(y) log f(y,\boldsymbol{\theta }) dy.
\end{equation}

However, now Expression (\ref{Eq3}) does not tend to Expression (\ref{Nueva}) when $\alpha \rightarrow 0.$
In order to recover such convergence, and then extend the classical results based on Kullback-Leibler divergence, we slightly modify Expression (\ref{Eq3}) as
\begin{equation}\label{Aux}
V_{\alpha}\left( \boldsymbol{\theta }\right) = - \frac{\int f(y,\boldsymbol{\theta })^{\alpha } g(y) dy}{\alpha \left( \int f(y,\boldsymbol{\theta })^{\alpha +1} dy\right)
^{\frac{\alpha }{\alpha +1}}} + {1\over \alpha},
\end{equation}
where the value of $\boldsymbol{\theta}$ minimizing (\ref{Eq3}) is the same as for minimizing (\ref{Aux}). %Now, the following holds:
Next lemma proves the required convergence of the objective functions.

\begin{lemma} For any two density function $f(\cdot ,\boldsymbol{\theta })$ and $g(\cdot ),$ the following convergence holds
$$ \lim_{\alpha \rightarrow 0} V_{i,\alpha }(\boldsymbol{\theta })= V_{i,0}(\boldsymbol{\theta }). $$
\end{lemma}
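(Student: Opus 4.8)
The plan is to recognize that the claimed limit is an indeterminate form of type $0/0$ and to resolve it by L'Hôpital's rule. First I would rewrite the modified objective as
\begin{equation*}
V_{\alpha}(\boldsymbol{\theta}) = \frac{1}{\alpha}\left(1 - \frac{A(\alpha)}{B(\alpha)^{\alpha/(\alpha+1)}}\right),
\end{equation*}
where $A(\alpha) = \int f(y,\boldsymbol{\theta})^{\alpha} g(y)\,dy$ and $B(\alpha) = \int f(y,\boldsymbol{\theta})^{\alpha+1}\,dy$. Since $g$ and $f(\cdot,\boldsymbol{\theta})$ are densities, $A(0) = B(0) = 1$, so the bracketed numerator vanishes at $\alpha = 0$ while the denominator $\alpha$ also vanishes; this is precisely the source of the indeterminacy, and the reason the unmodified expression (\ref{Eq3}) failed to converge to (\ref{Nueva}) as observed in the text.

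Writing $N(\alpha)$ for the bracketed numerator and applying L'Hôpital's rule, the limit equals $N'(0)$, so it remains to differentiate $H(\alpha) := A(\alpha)/B(\alpha)^{\alpha/(\alpha+1)}$ at $\alpha = 0$. Rather than differentiate the quotient directly, I would use logarithmic differentiation: setting $L(\alpha) = \log H(\alpha) = \log A(\alpha) - \frac{\alpha}{\alpha+1}\log B(\alpha)$, one has $H'(\alpha) = H(\alpha)L'(\alpha)$, and since $H(0) = 1$ this gives $H'(0) = L'(0)$. Differentiating under the integral sign yields $A'(\alpha) = \int f(y,\boldsymbol{\theta})^{\alpha}\log f(y,\boldsymbol{\theta})\,g(y)\,dy$, whence $A'(0)/A(0) = \int g(y)\log f(y,\boldsymbol{\theta})\,dy$. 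The two contributions coming from the factor $\frac{\alpha}{\alpha+1}\log B(\alpha)$ both vanish at $\alpha = 0$: one because $\log B(0) = \log 1 = 0$, and the other because the prefactor $\frac{\alpha}{\alpha+1}$ itself vanishes there. Hence $L'(0) = \int g(y)\log f(y,\boldsymbol{\theta})\,dy$.

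Combining these gives
\begin{equation*}
\lim_{\alpha\to 0} V_{\alpha}(\boldsymbol{\theta}) = N'(0) = -H'(0) = -\int g(y)\log f(y,\boldsymbol{\theta})\,dy = V_{0}(\boldsymbol{\theta}),
\end{equation*}
which is exactly the asserted identity; with the index $i$ attached throughout, the argument is verbatim the same for each pair $(f_i,g_i)$. The main obstacle I anticipate is technical rather than conceptual: carefully handling the composite exponent $B(\alpha)^{\alpha/(\alpha+1)}$ via logarithmic differentiation, and justifying the interchange of differentiation and integration used to obtain $A'$ and $B'$. This interchange requires mild regularity and domination assumptions on $f(\cdot,\boldsymbol{\theta})$ and $g$ (so that $\int f(y,\boldsymbol{\theta})^{\alpha}|\log f(y,\boldsymbol{\theta})|\,g(y)\,dy$ and the analogous integral defining $B'$ are finite in a neighbourhood of $\alpha = 0$), which I would either invoke as standing assumptions on the model or verify directly.
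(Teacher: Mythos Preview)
Your proposal is correct and follows essentially the same route as the paper: both recognize the $0/0$ indeterminacy, apply L'H\^{o}pital, and handle the awkward exponent $B(\alpha)^{\alpha/(\alpha+1)}$ via logarithmic differentiation. Your version is somewhat more streamlined in that you take the logarithm of the entire quotient $H(\alpha)=A(\alpha)/B(\alpha)^{\alpha/(\alpha+1)}$ at once, whereas the paper isolates $z(\alpha)=B(\alpha)^{\alpha/(\alpha+1)}$, differentiates it logarithmically, and then applies L'H\^{o}pital to $(z-A)/(\alpha z)$; but the computations and the key observation that the $B$-contributions vanish at $\alpha=0$ are identical.
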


\begin{proof}
First, note that

\begin{equation}\label{limite}
\lim_{\alpha \rightarrow 0}   \left( - \frac{\int f(y,\boldsymbol{\theta })^{\alpha } g(y) dy}{\alpha \left( \int f(y,\boldsymbol{\theta })^{\alpha +1} dy\right)
^{\frac{\alpha }{\alpha +1}}}  + {1\over \alpha} \right)
\end{equation}
 leads to an indeterminate $(0/0)$. Let us denote

$$ z(\alpha )= \left( \int f(y,\boldsymbol{\theta })^{\alpha +1} dy\right)
^{\frac{\alpha }{\alpha +1}}.$$

Taking derivatives on its logarithm
$$ \log z(\alpha )= {\alpha \over \alpha +1} \log \left( \int f(y,\boldsymbol{\theta })^{\alpha +1} dy\right),$$
we obtain, after some algebra, that
 ${\partial \log z(\alpha )\over \partial \alpha } = {1\over z(\alpha )} {\partial z(\alpha )\over \partial \alpha }.$
On the other hand, the derivative of the function $ \log z(\alpha )$ is given by
$$ {\partial \log z(\alpha )\over \partial \alpha } = {1\over (\alpha +1)^2} \log \left( \int f(y,\boldsymbol{\theta })^{\alpha +1} dy\right) + {\alpha \over \alpha +1} {\left( \int f(y,\boldsymbol{\theta })^{\alpha +1} \log f(y,\boldsymbol{\theta }) dy\right) \over \left( \int f(y,\boldsymbol{\theta })^{\alpha +1} dy\right)},$$
and solving the above equation we have that
\begin{align*}
	 {\partial z(\alpha )\over \partial \alpha } = &\left[ {1\over (\alpha +1)^2} \log \left( \int f(y,\boldsymbol{\theta })^{\alpha +1} dy\right) + {\alpha \over \alpha +1} {\left( \int f(y,\boldsymbol{\theta })^{\alpha +1} \log f(y,\boldsymbol{\theta }) dy\right) \over \left( \int f(y,\boldsymbol{\theta })^{\alpha +1} dy\right)} \right]\\
	 & \times  \left( \int f(y,\boldsymbol{\theta })^{\alpha +1} dy\right)
	^{\frac{\alpha }{\alpha +1}}.
\end{align*}

Hence, applying L'H\^{o}pital rule in (\ref{limite}), we obtain that
$$ \lim_{\alpha \rightarrow 0} - \frac{\int f(y,\boldsymbol{\theta })^{\alpha } g(y) dy}{\alpha \left( \int f(y,\boldsymbol{\theta })^{\alpha +1} dy\right)
^{\frac{\alpha }{\alpha +1}}}  + {1\over \alpha} = \lim_{\alpha \rightarrow 0} { - \int f(y,\boldsymbol{\theta })^{\alpha } g(y) \log f(y,\boldsymbol{\theta }) dy + {\partial z(\alpha )\over \partial \alpha } \over z-\alpha {\partial z(\alpha )\over \partial \alpha }}.$$

Finally,

\begin{itemize}
\item $ \lim_{\alpha \rightarrow 0} \int f(y,\boldsymbol{\theta })^{\alpha } g(y) \log f(y,\boldsymbol{\theta }) dy = \int g(y) \log f(y,\boldsymbol{\theta }) dy.$
\item $ \lim_{\alpha \rightarrow 0} {\partial z(\alpha )\over \partial \alpha } = {1\over 1} \log 1 + {0\over 1} {\int f(y,\boldsymbol{\theta }) \log f(y,\boldsymbol{\theta }) dy \over 1} =0.$
\item $ \lim_{\alpha \rightarrow 0} z  = 1^0 = 1.$
\end{itemize}
Hence, the result holds.
\end{proof}

Now, let us denote $V_{i, \alpha }(\boldsymbol{\theta })$
%\begin{equation}\label{Aux}
%V_{i, \alpha }(\boldsymbol{\theta })=\frac{\int f_{i}(y,\boldsymbol{\theta })^{\alpha } g_i(y) dy}{\alpha \left( \int f_{i}(y,\boldsymbol{\theta })^{\alpha +1}dy\right)^{\frac{\alpha }{\alpha +1}}}  - {1\over \alpha},
%\end{equation}
the corresponding objective functions for each pair of distributions $(f_{i}(y,\boldsymbol{\theta }), g_i(y)), i=1,...,n,$ as given in (\ref{Aux}).
As all densities $f_{i}(y,\boldsymbol{\theta })$ share a common parameter, the model parameter that best approximates the different underlying densities should minimize the weighted objective function, giving equal weighting to all functions $V_{i, \alpha }(\boldsymbol{\theta }).$ Hence, we consider
\begin{equation}\label{A}
H_{\alpha }(\boldsymbol{\theta })= {1\over n} \sum_{i=1}^n V_{i, \alpha }(\boldsymbol{\theta })= {1\over n} \sum_{i=1}^n\left[-\frac{\int f_{i}(y,\boldsymbol{\theta })^{\alpha } g_i(y) dy}{\alpha \left( \int f_{i}(y,\boldsymbol{\theta })^{\alpha +1}dy\right)^{\frac{\alpha }{\alpha +1}}}  + {1\over \alpha} \right].
\end{equation}
%and
%
%\begin{equation}
%L_{\alpha }^{i}\left( \boldsymbol{\theta }\right) =\left( \int f_{i}(y, \boldsymbol{\theta })^{\alpha +1}dy\right) ^{\frac{\alpha }{\alpha +1}}.
%\label{EL}
%\end{equation}

\begin{definition}
Consider $(g_1(y), ..., g_n(y))$ and $ (f_1(y, \boldsymbol{\theta }), ..., f_n(y, \boldsymbol{\theta })),$ $n$ pairs of true and assumed densities for i.n.i.d.o. random variables $Y_i, i = 1,...,n.$
 For any $\alpha \geq 0,$ the value $\boldsymbol{\theta }_{\boldsymbol{g},\alpha }$ satisfying
\begin{equation*}
\boldsymbol{\theta }_{\boldsymbol{g}, \alpha }=\arg \min_{\boldsymbol{\theta }} {1\over n} \sum_{i=1}^n \left[-\frac{\int f_i(y,\boldsymbol{\theta })^{\alpha } g_i(y) dy}{\alpha \left( \int f_i(y,\boldsymbol{\theta })^{\alpha +1}dy\right)
^{\frac{\alpha }{\alpha +1}}}  + {1\over \alpha} \right]= \arg \min_{\boldsymbol{\theta }} {1\over n} \sum_{i=1}^n V_{i, \alpha }(\boldsymbol{\theta}).
\end{equation*}
is called the {\bf best-fitting parameter according to RP}.

% For $\alpha =0,$ we define
%$$ \boldsymbol{\theta }_{\boldsymbol{g}, 0}= \arg \max_{\boldsymbol{\theta }} {1\over n} \sum_{i=1}^n V_{i, 0}(\boldsymbol{\theta}),$$
%where
%$$ V_{i, 0}(\boldsymbol{\theta}) = \lim_{\alpha \rightarrow 0} V_{i, \alpha }(\boldsymbol{\theta}) = \int g(y) log f(y,\boldsymbol{\theta }) dy.$$
\end{definition}

In the following we shall assume that there exists an open subset $\boldsymbol{\Theta _{0}}\subset \boldsymbol{\Theta }$ that contains the best-fitting parameter $\boldsymbol{\theta }_{\boldsymbol{g},\alpha }.$

For any fixed $i=1,...,n,$ the true distribution $g_i$ of the random variable $Y_i$ is usually unknown in practice and thus $\boldsymbol{\theta }_{\boldsymbol{g}, \alpha }$ must be empirically estimated. As we only have one observation of each variable $Y_{i},$ the best way to estimate $g_{i}$ based on the observation $y_i$ is assuming that the distribution is degenerate in $y_{i}.$ We will denote this degenerate distribution by $\widehat{g}_i.$ Therefore, the empirical estimate of the RP divergence with $\alpha >0$, given in Eq. (\ref{2.1}) is
\begin{equation}
R_{\alpha }\left( f_{i}(Y_{i},\boldsymbol{\theta }),\widehat{g}_{i}\right) = \frac{1}{\alpha +1}\log \left( \int f_{i}(y,\boldsymbol{\theta })^{\alpha +1}dy\right) -\frac{1}{\alpha }\log f_{i}(Y_{i},\boldsymbol{\theta })^{\alpha }+k,  \label{2.7}
\end{equation}
and similarly the empirical estimate of the RP for $\alpha=0,$ stated in (\ref{2.3}), yields to
\begin{equation}\label{2.8}
R_{0}\left( f_{i}(Y_{i},\boldsymbol{\theta }),\widehat{g}_{i}\right) = - \log f_{i}(Y_{i},\boldsymbol{\theta })+k,
\end{equation}
where $k$ in (\ref{2.7}) and (\ref{2.8}) denotes a constant that does not depend on $\boldsymbol{\theta}.$
As discussed earlier, the best estimator of the model parameter $\boldsymbol{\theta},$ based on the  RP divergence should minimize its empirical estimate.
%Now, as before, expression (\ref{2.7}) can be written as
%
%\begin{equation*}
%R_{\alpha }\left( f_{i}(Y_{i},\boldsymbol{\theta }),\widehat{g}_{i}\right) =- \frac{1}{\alpha }\log \frac{f_{i}(Y_{i},\boldsymbol{\theta })^{\alpha }}{\left( \int f_{i}(y,\boldsymbol{\theta })^{\alpha +1}dy\right) ^{\frac{\alpha }{\alpha +1}}}+\textcolor{red}{k},
%\end{equation*}
But again, minimizing the estimated RP, $R_{\alpha }\left( f_{i}(Y_{i},\boldsymbol{\theta }),\widehat{g}_{i}\right), $  for $\alpha >0,$ is equivalent to minimizing
\begin{equation}\label{V_i}
\widehat{V}_{i,\alpha}\left(Y_i, \boldsymbol{\theta }\right) = -\frac{f_{i}(Y_{i},\boldsymbol{\theta })^{\alpha }}{\alpha \left( \int f_{i}(y,\boldsymbol{\theta })^{\alpha +1}dy\right)
^{\frac{\alpha }{\alpha +1}}} +{1\over \alpha }.
\end{equation}
and for $\alpha =0,$ we can proceed the same way and conclude that minimizing $R_{0}\left( f_{i}(Y_{i},\boldsymbol{\theta }),\widehat{g}_{i}\right) $ in $\boldsymbol{\theta }$, is equivalent to minimizing
\begin{equation}\label{V_i0}
\widehat{V}_{0,\alpha}\left(Y_i, \boldsymbol{\theta }\right) = - \log f(Y_i,\boldsymbol{\theta }).
\end{equation}

%As before, the limit when $\alpha \rightarrow 0$ in (13) is not (\ref{V_i0}). To solve this, we are going to consider
%\begin{equation}
%V_{i, \alpha }(Y_{i},\boldsymbol{\theta })=\frac{f_{i}(Y_{i},\boldsymbol{\theta })^{\alpha }}{\alpha \left( \int f_{i}(y,\boldsymbol{\theta })^{\alpha +1}dy\right)^{\frac{\alpha }{\alpha +1}}} -{1\over \alpha },
%\end{equation}\label{V_i}
%and
%$$ V_{i, 0}(Y_{i},\boldsymbol{\theta })= \lim_{\alpha \rightarrow 0} V_{i, \alpha }(Y_{i},\boldsymbol{\theta }) = \log f(y,\boldsymbol{\theta }).$$

Now, all the available information about the true value of the parameter comes from  the set observed data, and so to obtain the best estimation fitting jointly all the observations we should consider the weighted objective function given for for $\alpha >0$ as
\begin{equation}
	\begin{aligned}
		H_{n, \alpha }(\boldsymbol{\theta })=&
		% \frac{1}{n}\tsum\limits_{i=1}^{n}\frac{f_{i}(Y_{i},\boldsymbol{\theta })^{\alpha }}{\alpha \left( \int f_{i}(y,\boldsymbol{\theta })^{\alpha +1}dy\right) ^{\frac{\alpha }{\alpha +1}}} - {1\over \alpha }
		\frac{1}{n} \tsum\limits_{i=1}^{n} \left[-\frac{f_{i}(Y_{i},\boldsymbol{\theta })^{\alpha }}{\alpha L_{\alpha }^{i}\left( \boldsymbol{\theta }\right) } + {1\over \alpha }\right]\\
		 =& \frac{1}{n} \tsum\limits_{i=1}^{n}\widehat{V}_{i, \alpha }(Y_{i},\boldsymbol{\theta }).
	\end{aligned} \label{2.4}
\end{equation}
with
\begin{equation*}
	L_{\alpha }^{i}\left( \boldsymbol{\theta }\right) =\left( \int f_{i}(y, \boldsymbol{\theta })^{\alpha +1}dy\right) ^{\frac{\alpha }{\alpha +1}},
	\label{EL}
\end{equation*}
and correspondingly,
\begin{equation} \label{2.5}
	H_{n, 0}(\boldsymbol{\theta }) = \lim_{\alpha \rightarrow 0} H_{n, \alpha }(\boldsymbol{\theta }) = {1\over n} \sum_{i=1}^n \widehat{V}_{i, 0}(Y_{i},\boldsymbol{\theta }).
\end{equation}

Remark at this point that the expected values of the estimates are indeed the theoretical objective functions
$$ V_{i,\alpha }(\boldsymbol{\theta }) = E_{Y_i} \left[ \widehat{V}_{i, \alpha }(Y_i, \boldsymbol{\theta })\right] ,\quad H_{\alpha }(\boldsymbol{\theta }) = E_{Y_1, ..., Y_n} \left[ H_{n, \alpha }(\boldsymbol{\theta })\right] .$$

\begin{definition}
Given $Y_{1},...,Y_{n}$ be i.n.i.d.o. and $\alpha >0,$ the {\bf minimum RP estimator (MRPE)}, $\widehat{\boldsymbol{\theta }}_{\alpha },$ is given by
\begin{equation}
\widehat{\boldsymbol{\theta }}_{\alpha }=\arg \min_{\boldsymbol{\theta } \in \boldsymbol{\Theta }}H_{n, \alpha }(\boldsymbol{\theta }),  \label{2.4.1}
\end{equation}
with $H_{n, \alpha }(\boldsymbol{\theta })$ defined in (\ref{2.4}) for $\alpha >0$ and in (\ref{2.5}) for $\alpha = 0.$
\end{definition}

Note that at $\alpha =0,$ we recover the maximum likelihood estimator (MLE) of the model and so the MRPE family includes the classical estimator as a particular case.
%$$ \widehat{\boldsymbol{\theta }}_{0}= \arg \max_{\boldsymbol{\theta } \in \boldsymbol{\Theta }} H_{n, 0}(\boldsymbol{\theta })=\arg \max_{\boldsymbol{\theta } \in \boldsymbol{\Theta }} \frac{1}{n}{\sum\limits_{i=1}^{n}}\log f_{i}(Y_{i},\boldsymbol{\theta }).$$

As the MRPE, $\widehat{\boldsymbol{\theta }}_{\alpha },$ is a minimum of a differentiable function, it must annul the first derivatives of the function $H_{n,\alpha}(\boldsymbol{\theta })$
\begin{equation*}
\frac{1}{n}\tsum\limits_{i=1}^{n}\frac{\partial \widehat{V}_{i, \alpha }(Y_{i};\boldsymbol{\theta })}{\partial \theta _{j}}=0, \hspace{0.3cm} j=1, ...,p.
\end{equation*}
%of satisfy the system of $p$-equations given by

That is, the estimation equations of the MRPE are
\begin{equation*}
\frac{1}{n}\tsum\limits_{i=1}^{n}\frac{1}{\alpha L_{\alpha }^{i}\left( \boldsymbol{\theta }\right) ^{2}}\left( \alpha f_{i}(Y_i, \boldsymbol{\theta })^{\alpha }u_{j}(Y_i, \boldsymbol{\theta })L_{\alpha }^{i}\left( \boldsymbol{\theta }\right) -\frac{\partial L_{\alpha }^{i}\left( \boldsymbol{\theta }\right) }{\partial \theta _{j}}f_{i}(Y_i, \boldsymbol{\theta })^{\alpha }\right) = 0, \hspace{0.3cm} j=1, ...,p,
\end{equation*}
with
\begin{equation*}
u_{j}(y,\boldsymbol{\theta })=\frac{\partial \log (f_{i}(y,\boldsymbol{\theta }))}{\partial {\theta _{j}}},
\end{equation*}
and
\begin{align*}
\frac{\partial L_{\alpha }^{i}\left( \boldsymbol{\theta }\right) }{\partial \theta _{j}} & = \frac{\alpha }{\alpha +1}\left( \int f_{i}(y,\boldsymbol{\theta })^{\alpha +1}dy\right) ^{\frac{\alpha }{\alpha +1}-1}\left( \alpha
+1\right) \int f_{i}(y,\boldsymbol{\theta })^{\alpha +1}u_{j}(y,\boldsymbol{\theta })dy \\
& =\alpha \left( \int f_{i}(y,\boldsymbol{\theta })^{\alpha +1}dy\right) ^{\frac{\alpha }{\alpha +1}-1}\int f_{i}(y,\boldsymbol{\theta })^{\alpha +1}u_{j}(y,\boldsymbol{\theta })dy, i=1, ..., n.
\end{align*}

It is interesting to observe that if $Y_{1},...,Y_{n}$ are independent and identically distributed (i.i.d.) random variables, the MRPE $\widehat{\boldsymbol{\theta }}_{\alpha }$ coincides with the estimator $\widehat{\boldsymbol{\theta }}_{\alpha }^{\ast }$ proposed in \cite{brtova12}.

We next study the asymptotic distribution of the MRPE, $\widehat{\boldsymbol{\theta }}_{\alpha }.$
For notation simplicity, let us define the matrices $\boldsymbol{\Psi }_{n, \alpha }\left( \boldsymbol{\theta }_{\boldsymbol{g}, \alpha }\right) $ and $\boldsymbol{\Omega }_{n, \alpha }\left( \boldsymbol{\theta }_{\boldsymbol{g}, \alpha }\right) $ as follows:
\begin{equation}
\boldsymbol{\Psi }_{n, \alpha }\left( \boldsymbol{\theta }_{\boldsymbol{g}, \alpha }\right) =\frac{1}{n}\tsum\limits_{i=1}^{n}\boldsymbol{J}_{\alpha }^{\left( i\right) }\left( \boldsymbol{\theta }_{\boldsymbol{g}, \alpha }\right) ,  \label{2.4.2}
\end{equation}
with
\begin{equation*}
\boldsymbol{J}_{\alpha }^{\left( i\right) }\left( \boldsymbol{\theta }_{\boldsymbol{g}, \alpha }\right) =\left( E_{Y_i}\left[ \frac{\partial ^{2}\widehat{V}_{i, \alpha }(Y_i;\boldsymbol{\theta })}{\partial \theta _{j}\partial \theta _{k}}\right] _{\boldsymbol{\theta } = \boldsymbol{\theta }_{\boldsymbol{g}, \alpha }}\right) _{j, k=1,...,p}, i=1, ..., n,
\end{equation*}
and
\begin{equation}
\boldsymbol{\Omega }_{n, \alpha }\left( \boldsymbol{\theta }_{\boldsymbol{g}, \alpha }\right) = \frac{1}{n}\tsum\limits_{i=1}^{n}Var_{Y_i} \left[ \left( \frac{\partial \widehat{V}_{i, \alpha }(Y_i;\boldsymbol{\theta })}{\partial \theta _{j}}\right)_{j=1,..,p}\right] _{\boldsymbol{\theta }=\boldsymbol{\theta }_{\boldsymbol{g}, \alpha }},\, i=1, ..., n. \label{2.4.3}
\end{equation}

Additionally, let $\lambda_1, ..., \lambda_n$ be the eigenvalues of $\boldsymbol{\Omega }_{n, \alpha }\left( \boldsymbol{\theta }_{\boldsymbol{g}, \alpha }\right) .$ From now on, we will assume that
$
\inf_{n}\lambda _{n}>0,
$
so that $\boldsymbol{\Omega }_{n, \alpha }\left( \boldsymbol{\theta }_{\boldsymbol{g}, \alpha }\right) $ can be inverted.

We consider the following regularity conditions:

\begin{description}
%\item[C0.\label{itm:C0}] The parametric full model under consideration
%contains the true distribution $\boldsymbol{G},$ i.e. $\boldsymbol{G}=\boldsymbol{F_{\theta}},$ for some $\boldsymbol{\theta}\in \boldsymbol{\Theta }.$

\item[C1.\label{itm:C1}] The support, $\mathcal{X},$ of the density functions $f_{i}(y,\boldsymbol{\theta })$ is the same for all $i$ and it does not depend on $\boldsymbol{\theta }.$ Besides, the true probability density
functions $g_{1},...,g_{n}$ have the same support $\mathcal{X}$.

\item[C2.\label{itm:C2}] For almost all $y\in \mathcal{X}$ the density $f_{i}(y,\boldsymbol{\theta })$ admits all third derivatives with respect to $\boldsymbol{\theta \in \Theta }$ and $i=1,...,n.$

\item[C3.\label{itm:C3}] For $i=1,2,..., n$ the integrals

\begin{equation*}
\tint f_{i}(y,\boldsymbol{\theta })^{1+\alpha }dy\text{ }
\end{equation*}%
can be differentiated thrice with respect to $\boldsymbol{\theta }$ and we can interchange integration and differentiation. As a consequence of this condition, it follows that

$$ {\partial V_{i, \alpha }(\boldsymbol{\theta })\over \partial \boldsymbol{\theta }} = E_{Y_i}\left[ {\partial \widehat{V}_{i, \alpha }(Y_i, \boldsymbol{\theta })\over \partial \boldsymbol{\theta }}\right] , \quad
{\partial^2 V_{i, \alpha }(\boldsymbol{\theta })\over \partial \boldsymbol{\theta } \partial \boldsymbol{\theta }^T} = E_{Y_i}\left[ {\partial^2 \widehat{V}_{i, \alpha }(Y_i,\boldsymbol{\theta })\over \partial \boldsymbol{\theta }\partial \boldsymbol{\theta }^T}\right] = \boldsymbol{J}_{ \alpha }^{(i)}(\boldsymbol{\theta }).$$

\item[C4.\label{itm:C4}] For $i=1,2,..., n$ the matrices $\boldsymbol{J}_{\alpha }^{\left( i\right) }\left( \boldsymbol{\theta }_{\boldsymbol{g}, \alpha }\right) $ are positive definite.

\item[C5.\label{itm:C5}] There exist functions $M_{jkl}^{\left( i\right) }$ and constants $m_{jkl}$ such that

\begin{equation*}
\left\vert \frac{\partial ^{3}\widehat{V}_{i, \alpha }(y;\boldsymbol{\theta })}{\partial \theta_{j}\partial \theta _{k}\partial \theta _{l}}\right\vert \leq M_{jkl}^{\left( i\right) }\left( y\right) ,\text{ \qquad }\forall \boldsymbol{\theta }\in \boldsymbol{\Theta },\text{ }\forall j,k,l
\end{equation*}
and
\begin{equation*}
E_{Y}\left[ M_{jkl}^{\left( i\right) }\left( Y\right) \right] =m_{jkl}<\infty ,\text{ \qquad }\forall \boldsymbol{\theta }\in \boldsymbol{\Theta },\text{ }\forall j,k,l.
\end{equation*}

\item[C6.\label{itm:C6}] For all $j,k,l$ and $\boldsymbol{\theta }\in \boldsymbol{\Theta },$ the sequences $\left\{ \frac{\partial \widehat{V}_{i, \alpha }(Y_i, \boldsymbol{\theta})}{\partial\theta_{j}}\right\}_{j=1,...,p}$ ,$\left\{ \frac{\partial^{2}\widehat{V}_{i, \alpha }(Y_i, \boldsymbol{\theta})}{\partial\theta _{j}\partial\theta_{k}}\right\} _{j,k=1,..,p}$ and $\left\{ \frac {\partial^{3}\widehat{V}_{i, \alpha }(Y_i, \boldsymbol{\theta})}{\partial\theta_{j}\partial \theta_{k}\partial l}\right\} _{j,k,l=1,..,p}$ are uniformly integrable in the Ces\`{a}ro sense, i.e.

\begin{align*}
\lim_{n\rightarrow \infty }\left( \sup_{n>1}\frac{1}{n}\tsum \limits_{i=1}^{n}E_{Y_i}\left[ \left\vert \frac{\partial \widehat{V}_{i, \alpha }(Y_i, \boldsymbol{\theta })}{\partial \theta _{j}}\right\vert {\LARGE I}_{\left\{ \frac{\partial V_{i, \alpha }(Y_i, \boldsymbol{\theta })}{\partial \theta _{j}}>n\right\} }(Y_i)\right] \right) & =0, \\
\lim_{n\rightarrow \infty }\left( \sup_{n>1}\frac{1}{n}\tsum \limits_{i=1}^{n}E_{Y_i}\left[ \left\vert \frac{\partial ^{2}\widehat{V}_{i, \alpha }(Y_i, \boldsymbol{\theta })}{\partial \theta _{j}\partial \theta_{k}}\right\vert {\LARGE I}_{\left\{ \frac{\partial ^{2}V_{i, \alpha }(Y_i, \boldsymbol{\theta })}{\partial \theta _{j}\partial \theta _{k}}>n\right\} }(Y_i)\right] \right) & =0, \\
\lim_{n\rightarrow \infty }\left( \sup_{n>1}\frac{1}{n}\tsum \limits_{i=1}^{n}E_{Y_i}\left[ \left\vert \frac{\partial ^{3}\widehat{V}_{i, \alpha }(Y_i, \boldsymbol{\theta })}{\partial \theta _{j}\partial \theta _{k}\partial \theta_{l}}\right\vert {\LARGE I}_{\left\{ \frac{\partial ^{3}V_{i, \alpha }(Y_i, \boldsymbol{\theta })}{\partial \theta _{j}\partial \theta _{k}\partial \theta _{l}}>n\right\} }(Y_i)\right] \right) & =0.
\end{align*}

\item[C7.\label{itm:C7 copy(1)}] For all $\varepsilon >0$

\begin{equation*}
\lim_{n\rightarrow \infty }\left\{ \frac{1}{n}\tsum\limits_{i=1}^{n}E_{Y_i}\left[ \left\Vert \boldsymbol{\Omega }_{n}^{-\frac{1}{2}}\left( \boldsymbol{\theta }\right) \frac{\partial \widehat{V}_{i, \alpha }(Y_i,\boldsymbol{\theta } )}{\partial \boldsymbol{\theta }}\right\Vert _{2}^{2}{\LARGE I}_{\left\{ \left\Vert \boldsymbol{\Omega }_{n}^{-\frac{1}{2}}\left( \boldsymbol{\theta }\right) \frac{\partial \widehat{V}_{i, \alpha }(Y_i,\theta )}{\partial \boldsymbol{\theta }}\right\Vert _{2}^{2}\right\} }(Y_i)\right] >\varepsilon \sqrt{n}\right\} =0.
\end{equation*}
\end{description}

Now, the following result, whose proof can be seen in \cite{cajapa22}, holds.

\begin{theorem}
Suppose the previous regularity conditions {\bf C1- C7} hold. Then,

\begin{equation}\label{dist-asymp}
\sqrt{n}\boldsymbol{\Omega }_{n, \alpha }\left( \boldsymbol{\theta }_{\boldsymbol{g}, \alpha }\right)^{-\frac{1}{2}}\boldsymbol{\Psi }_{n, \alpha }\left( \boldsymbol{\theta }_{\boldsymbol{g}, \alpha }\right) \left(
\widehat{\boldsymbol{\theta }}_{\alpha }-\boldsymbol{\theta }_{\boldsymbol{g}, \alpha }\right) \underset{n\rightarrow \infty }{\overset{L}{\rightarrow }}N(\boldsymbol{0}_{p},\boldsymbol{I}_{p}),
\end{equation}
being $\boldsymbol{I}_{p}$ the p-dimensional identity matrix.
\end{theorem}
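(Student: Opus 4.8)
The plan is to treat the MRPE as an M-estimator defined through its estimating equations and to derive its asymptotic law by a Taylor expansion of the score around $\boldsymbol{\theta}_{\boldsymbol{g},\alpha}$, followed by a central limit theorem adapted to independent but non-identically distributed summands. Writing $\boldsymbol{U}_{n,\alpha}(\boldsymbol{\theta})=\partial H_{n,\alpha}(\boldsymbol{\theta})/\partial\boldsymbol{\theta}=\frac{1}{n}\sum_{i=1}^{n}\partial\widehat{V}_{i,\alpha}(Y_i;\boldsymbol{\theta})/\partial\boldsymbol{\theta}$ for the gradient of the objective, the estimator satisfies $\boldsymbol{U}_{n,\alpha}(\widehat{\boldsymbol{\theta}}_{\alpha})=\boldsymbol{0}$. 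First I would invoke a consistency argument (standard for M-estimators under C1--C5) guaranteeing $\widehat{\boldsymbol{\theta}}_{\alpha}\overset{P}{\rightarrow}\boldsymbol{\theta}_{\boldsymbol{g},\alpha}$, so that eventually $\widehat{\boldsymbol{\theta}}_{\alpha}$ lies in the open set $\boldsymbol{\Theta}_0$ and a second-order Taylor expansion about $\boldsymbol{\theta}_{\boldsymbol{g},\alpha}$ is legitimate. This expansion reads
\begin{equation*}
\boldsymbol{0}=\sqrt{n}\,\boldsymbol{U}_{n,\alpha}(\boldsymbol{\theta}_{\boldsymbol{g},\alpha})+\boldsymbol{H}_{n,\alpha}(\boldsymbol{\theta}_{\boldsymbol{g},\alpha})\,\sqrt{n}\left(\widehat{\boldsymbol{\theta}}_{\alpha}-\boldsymbol{\theta}_{\boldsymbol{g},\alpha}\right)+\boldsymbol{R}_n,
\end{equation*}
where $\boldsymbol{H}_{n,\alpha}(\boldsymbol{\theta})=\frac{1}{n}\sum_{i=1}^{n}\partial^{2}\widehat{V}_{i,\alpha}(Y_i;\boldsymbol{\theta})/\partial\boldsymbol{\theta}\partial\boldsymbol{\theta}^{T}$ is the sample Hessian and $\boldsymbol{R}_n$ gathers the third-order terms.

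Next I would reduce the quadratic and remainder pieces to the deterministic matrix $\boldsymbol{\Psi}_{n,\alpha}(\boldsymbol{\theta}_{\boldsymbol{g},\alpha})$ multiplied by $\sqrt{n}(\widehat{\boldsymbol{\theta}}_{\alpha}-\boldsymbol{\theta}_{\boldsymbol{g},\alpha})$, up to a $o_P$ term. By C3 each Hessian summand has expectation $\boldsymbol{J}_{\alpha}^{(i)}(\boldsymbol{\theta}_{\boldsymbol{g},\alpha})$, so $E[\boldsymbol{H}_{n,\alpha}(\boldsymbol{\theta}_{\boldsymbol{g},\alpha})]=\boldsymbol{\Psi}_{n,\alpha}(\boldsymbol{\theta}_{\boldsymbol{g},\alpha})$; the Ces\`{a}ro uniform integrability in C6 then furnishes a weak law of large numbers for triangular arrays, yielding $\boldsymbol{H}_{n,\alpha}(\boldsymbol{\theta}_{\boldsymbol{g},\alpha})-\boldsymbol{\Psi}_{n,\alpha}(\boldsymbol{\theta}_{\boldsymbol{g},\alpha})=o_P(1)$. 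The remainder $\boldsymbol{R}_n$ is controlled via the third-derivative domination C5 together with the finiteness of the $m_{jkl}$: on the consistency event $\|\widehat{\boldsymbol{\theta}}_{\alpha}-\boldsymbol{\theta}_{\boldsymbol{g},\alpha}\|=o_P(1)$, it is of smaller order than the quadratic term. Since $\boldsymbol{\Psi}_{n,\alpha}$ is positive definite by C4, I then solve to obtain $\boldsymbol{\Psi}_{n,\alpha}(\boldsymbol{\theta}_{\boldsymbol{g},\alpha})\sqrt{n}(\widehat{\boldsymbol{\theta}}_{\alpha}-\boldsymbol{\theta}_{\boldsymbol{g},\alpha})=-\sqrt{n}\,\boldsymbol{U}_{n,\alpha}(\boldsymbol{\theta}_{\boldsymbol{g},\alpha})+o_P(1)$.

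The core of the argument is the limiting law of the normalized score $\sqrt{n}\,\boldsymbol{U}_{n,\alpha}(\boldsymbol{\theta}_{\boldsymbol{g},\alpha})=\frac{1}{\sqrt{n}}\sum_{i=1}^{n}\partial\widehat{V}_{i,\alpha}(Y_i;\boldsymbol{\theta}_{\boldsymbol{g},\alpha})/\partial\boldsymbol{\theta}$. By the first-order condition defining the best-fitting parameter, $\frac{1}{n}\sum_{i=1}^{n}\partial V_{i,\alpha}(\boldsymbol{\theta}_{\boldsymbol{g},\alpha})/\partial\boldsymbol{\theta}=\boldsymbol{0}$, and by C3 each summand has mean $\partial V_{i,\alpha}(\boldsymbol{\theta}_{\boldsymbol{g},\alpha})/\partial\boldsymbol{\theta}$; hence the score is a centred sum of independent non-identically distributed vectors whose averaged covariance is precisely $\boldsymbol{\Omega}_{n,\alpha}(\boldsymbol{\theta}_{\boldsymbol{g},\alpha})$ by (\ref{2.4.3}). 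The assumption $\inf_n\lambda_n>0$ makes $\boldsymbol{\Omega}_{n,\alpha}^{-1/2}$ well defined and bounded, so the Lindeberg-type condition C7 lets me apply the multivariate Lindeberg--Feller CLT (through the Cram\'{e}r--Wold device) to conclude $\sqrt{n}\,\boldsymbol{\Omega}_{n,\alpha}(\boldsymbol{\theta}_{\boldsymbol{g},\alpha})^{-1/2}\boldsymbol{U}_{n,\alpha}(\boldsymbol{\theta}_{\boldsymbol{g},\alpha})\overset{L}{\rightarrow}N(\boldsymbol{0}_p,\boldsymbol{I}_p)$.

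Finally, multiplying the solved equation by $\boldsymbol{\Omega}_{n,\alpha}(\boldsymbol{\theta}_{\boldsymbol{g},\alpha})^{-1/2}$ and applying Slutsky's theorem delivers the stated result, the $o_P(1)$ terms being absorbed since $\boldsymbol{\Omega}_{n,\alpha}^{-1/2}$ is bounded. I expect the main obstacle to be the i.n.i.d. structure itself: the ordinary law of large numbers and CLT are unavailable, so the delicate part is checking that C6 and C7 genuinely yield the triangular-array weak law for the Hessian and the Lindeberg--Feller CLT for the score under the common-parameter, non-identical-densities setup, while controlling the third-order remainder uniformly in $i$ through the Ces\`{a}ro-averaged bounds of C5.
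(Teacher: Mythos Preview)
Your proposal is correct and follows the standard M-estimator route (Taylor expansion of the estimating equation, weak law for the Hessian via the Ces\`aro uniform integrability in C6, control of the cubic remainder through C5, and Lindeberg--Feller for the score via C7). Note, however, that the paper itself does not prove this theorem: it merely states the result and refers the reader to \cite{cajapa22} for the proof, so there is no in-paper argument to compare against. Your outline is precisely the kind of argument one finds in that reference, and the only point worth flagging is a small subtlety you handle correctly but might state more explicitly: the individual score summands $\partial\widehat{V}_{i,\alpha}(Y_i;\boldsymbol{\theta}_{\boldsymbol{g},\alpha})/\partial\boldsymbol{\theta}$ need not have mean zero, only their \emph{average} does (by the first-order condition for $\boldsymbol{\theta}_{\boldsymbol{g},\alpha}$); the Lindeberg--Feller CLT is therefore applied to the centred summands $\partial\widehat{V}_{i,\alpha}/\partial\boldsymbol{\theta}-\partial V_{i,\alpha}/\partial\boldsymbol{\theta}$, whose averaged covariance is exactly $\boldsymbol{\Omega}_{n,\alpha}$ as defined in (\ref{2.4.3}).
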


\subsection{Example: The MPRE under the MLRM \label{sec:MLRM}}

Consider $(Y_{1},..., Y_{n})$ a set of random variables, related to the explanatory variables $(\boldsymbol{X}_{1},..., \boldsymbol{X}_{n})$ through the  MLRM,
\begin{equation}
Y_{i}=\boldsymbol{X}_{i}^{T}\boldsymbol{\beta }+\varepsilon _{i},\quad i=1,\dots ,n,  \label{eq:linear}
\end{equation}
where the errors $\varepsilon _{i}^{\prime }s$ are i.i.d. normal random variables with mean zero and variance $\sigma ^{2}$, $\boldsymbol{X}_{i}^{T}=(X_{i1},...,X_{ip})$ is the vector of independent variables
corresponding to the $i$-th condition and $\boldsymbol{\beta }=\left( \beta_{1},...,\beta _{p}\right) ^{T}$ is the vector of regression coefficients to be estimated. We will consider that, for each $i$, $\boldsymbol{X}_{i}$ is fixed, yielding to i.n.i.d.o. $Y_{i}^{\prime }s$, with $Y_{i}\sim \mathcal{N}(\boldsymbol{X}_{i}^{T}\boldsymbol{\beta },\sigma ^{2})$.

We next derive the explicit expression of the MRPE  for the parameters $\boldsymbol{\theta} = (\boldsymbol{\beta}, \sigma)$. With the previous notation, the assumed density functions are $f_{i}\left( y,\boldsymbol{\beta },\sigma \right) \equiv \mathcal{N}(\boldsymbol{X}_{i}^{T}\boldsymbol{\beta },\sigma ^{2})$ and then, using Eq. (\ref{Aux}), we have that for $\alpha >0$,
\begin{equation} \label{Vi}
	\begin{aligned}
		\widehat{V}_{i, \alpha }(Y_{i};\boldsymbol{\beta },\sigma )&=
		- \frac{\frac{1}{(2\pi )^{\alpha /2}\sigma ^{\alpha }}\exp \left( \frac{-\alpha (Y_{i}-\boldsymbol{X}_{i}^{T}\boldsymbol{\beta })^{2}}{2\sigma ^{2}}\right) }{\alpha \left( (2\pi )^{\alpha /2}\sigma ^{\alpha }\sqrt{1+\alpha }\right) ^{-\frac{\alpha }{\alpha +1}}}  + {1\over \alpha }\\
		&=
		- {1\over \alpha }\left( \frac{1+\alpha }{2\pi }\right) ^{\frac{\alpha }{2(\alpha +1)}}\sigma ^{-\frac{\alpha }{\alpha +1}}\exp \left( -\frac{\alpha }{2}\left( \frac{Y_{i}-\boldsymbol{X}_{i}^{T}\boldsymbol{\beta }}{\sigma }\right)^{2}\right) + {1\over \alpha }.
	\end{aligned}
\end{equation}
and thus,  the MRPE for $\alpha >0$ is obtained minimizing the averaged objective function
\begin{equation*}
	\begin{aligned}
		H_{n,\alpha} (\boldsymbol{\theta}) &= \frac{1}{n}\sum_{i=1}^{n}\widehat{V}_{i, \alpha }(Y_{i};\boldsymbol{\beta },\sigma )\\
		&=
		- {1\over \alpha }\left(\frac{1+\alpha }{2\pi }\right) ^{\frac{\alpha }{2(\alpha +1)}}\frac{1}{n}\sum_{i=1}^{n}\sigma ^{-\frac{\alpha }{\alpha +1}}\exp \left( -\frac{\alpha }{2}\left( \frac{Y_{i}-\boldsymbol{X}_{i}^{T}\boldsymbol{\beta }}{\sigma }\right) ^{2}\right)  + {1\over \alpha }.
	\end{aligned}
\end{equation*}
%Taking into account that the terms $k={1\over \alpha }\left( \frac{1+\alpha }{2\pi }\right) ^{\frac{\alpha }{2(\alpha +1)}}$ and ${1\over \alpha }$ do not depend on the model parameters $(\boldsymbol{\beta}, \sigma ),$

Ignoring all constant terms, we have that the MRPE for the MLRM is given, for $\alpha >0,$ as
\begin{equation*}
\left( \widehat{\boldsymbol{\beta }}_{\alpha },\widehat{\sigma }_{\alpha }\right) =
\arg \min_{\boldsymbol{\beta },\sigma }{\sum\limits_{i=1}^{n}}- \sigma ^{-\frac{\alpha }{\alpha +1}}\exp \left( -\frac{\alpha }{2}\left( \frac{Y_{i}-\boldsymbol{X}_{i}^{T}\boldsymbol{\beta }}{\sigma }\right)^{2}\right) .
\end{equation*}

Moreover, taking derivatives with respect to $\boldsymbol{\beta }$ and $\sigma,$ the estimation equations of $\widehat{\boldsymbol{\beta }}_{\alpha }$ and $\widehat{\sigma }_{\alpha }$ are
\begin{equation}
\begin{array}{l}
{\sum\limits_{i=1}^{n}}\exp \left( -\frac{\alpha }{2}\left( \frac{Y_{i}-\boldsymbol{X}_{i}^{T}\boldsymbol{\beta }}{\sigma }\right) ^{2}\right) \left( \frac{Y_{i}-\boldsymbol{X}_{i}^{T}\boldsymbol{\beta }}{\sigma }\right) \boldsymbol{X}_{i}=\boldsymbol{0}_{p} \\
{\sum\limits_{i=1}^{n}}\exp \left( -\frac{\alpha }{2}\left( \frac{Y_{i}-\boldsymbol{X}_{i}^{T}\boldsymbol{\beta }}{\sigma }\right) ^{2}\right) \left\{ \left( \frac{Y_{i}-\boldsymbol{X}_{i}^{T}\boldsymbol{\beta }}{\sigma
}\right) ^{2}-\frac{1}{1+\alpha }\right\} =0
\end{array}
,  \label{eq:estimating}
\end{equation}
which is exactly the same system as the one obtained in \cite{cajapa22}. For $\alpha =0$, if we denote $\mathbb{X}= (\boldsymbol{X}_1, ..., \boldsymbol{X}_n)_{n\times p}^{T}$ and $\boldsymbol{Y} =(Y_1, ..., Y_n),$ we get the MLE of $\widehat{\boldsymbol{\beta }}_0$ and $\widehat{\sigma }_0,$ i.e.

\begin{equation*}
\widehat{\boldsymbol{\beta }}_{0}=(\mathbb{X}^{T}\mathbb{X}\mathbf{)}^{-1}\mathbb{X}^{T}\mathbf{Y}\text{ \ and \ }\widehat{\sigma }_{0}^{2}=\frac{1}{n}{\sum\limits_{i=1}^{n}}\left( Y_{i}-\boldsymbol{X}_{i}^{T}\widehat{\boldsymbol{\beta }}_{0}\right) ^{2}.
\end{equation*}

Finally, from the results in \cite{cajapa22}, it can be seen that matrices $\boldsymbol{\Psi }_{n,\alpha}\left( \boldsymbol{\beta },\sigma \right) $ and $\boldsymbol{\Omega }_{n,\alpha}\left( \boldsymbol{\beta },\sigma \right) $ are given by
\begin{eqnarray*}
\boldsymbol{\Psi }_{n,\alpha}\left( \boldsymbol{\beta },\sigma \right)  & = & \frac{1}{n}\tsum\limits_{i=1}^{n}\boldsymbol{J}^{\left( i\right) }\left( \boldsymbol{\beta },\sigma ^{2}\right)  \\
& = & k\sigma ^{-\frac{3\alpha +2}{\alpha +1}} \left( \alpha +1\right) ^{-\frac{3}{2}}\left[ \begin{array}{cc} \frac{1}{n}\mathbb{X}^{T}\mathbb{X} & 0 \\ 0 & \frac{2}{\alpha +1} \end{array} \right] \\
& = & K_{1}\left( \alpha +1\right) ^{-\frac{3}{2}}\left[ \begin{array}{cc} \frac{1}{n}\mathbb{X}^{T}\mathbb{X} & 0 \\ 0 & \frac{2}{\alpha +1} \end{array} \right] ,
\end{eqnarray*}
and
\begin{eqnarray*}
	\boldsymbol{\Omega }_{n,\alpha}\left( \boldsymbol{\beta },\sigma \right)
	& = & \frac{1}{n}\tsum\limits_{i=1}^{n}Var_{Y_i}\left[ \left( \frac{\partial V_{i, \alpha }(Y_i;\boldsymbol{\beta },\sigma ^{2})}{\partial \theta _{j}}\right) _{j=1,..,k}\right] \\
	& = & K_{1}^{2}\sigma ^{2}\frac{1}{\left( 2\alpha +1\right) ^{3/2}}\left[
	\begin{array}{cc} \frac{1}{n}\mathbb{X}^{T}\mathbb{X} & \boldsymbol{0} \\ \boldsymbol{0} & \frac{(3\alpha ^{2}+4\alpha +2)}{(\alpha +1)^{2}(2\alpha +1)}\end{array}
	\right] .
\end{eqnarray*}
with
\begin{equation}\label{K1}
k= {1\over \alpha} \left( {1+\alpha \over 2\pi }\right)^{\alpha \over 2(\alpha +1)} ,\quad K_{1}=k\sigma ^{-\frac{3\alpha +2}{\alpha +1}}.
\end{equation}

Therefore, for $\alpha =0$ we get the Fisher information matrix for $\left( \boldsymbol{\beta },\sigma \right) $ in both matrices, i.e.
$$ \boldsymbol{\Psi }_{n,0}\left( \boldsymbol{\beta },\sigma \right) = \left[ \begin{array}{cc} {1\over \sigma^2}\frac{1}{n}\mathbb{X}^{T}\mathbb{X} & 0 \\ 0 & \frac{2}{\sigma^2} \end{array} \right],$$
and
$$ \boldsymbol{\Omega }_{n,0}\left( \boldsymbol{\beta },\sigma \right) = \left[ \begin{array}{cc} {1\over \sigma^2}\frac{1}{n}\mathbb{X}^{T}\mathbb{X} & \boldsymbol{0} \\ \boldsymbol{0} & \frac{2}{\sigma^2}\end{array}\right] .$$
%\end{example}

\section{Model selection criterion based on RP}

In this section we present the model selection criterion based on RP. Let us consider a collection of $l$ candidate models
\begin{equation}
\left\{ \boldsymbol{M}^{(s)} = \left( M_{1}^{(s)},...,M_{n}^{(s)}\right) \right\} _{s\in \left\{ 1,...,l\right\} }  \label{6.4.1}
\end{equation}
  such that each $\boldsymbol{M}^{(s)} $ is characterized by the parametric density functions
\begin{equation*}
\boldsymbol{f}(\cdot,\boldsymbol{\theta }_{s}) = \left( f_{1}(\cdot,\boldsymbol{\theta }_{s}),...,f_{n}(\cdot,\boldsymbol{\theta }_{s})\right) ,\text{ }\boldsymbol{\theta }_{s}\in \boldsymbol{\Theta }_{s}\subset \mathbb{R}^{p_s},
\end{equation*}
with associated distribution functions %$F_1(., \boldsymbol{\theta }_s),...,F_n(., \boldsymbol{\theta }_s)$ and let us denote
 $\boldsymbol{F}(., \boldsymbol{\theta }_s)=\left( F_1(\boldsymbol{\theta }_s),...,F_n(., \boldsymbol{\theta }_s)\right) ,$ where $\boldsymbol{\theta }_s$ is common for all density functions in model $s.$ That is, each candidate model would represent a parametric family defined by a common parameter, which may contain different number of parameters.
 Based on the random sample $Y_{1},...,Y_{n},$ we need to select the best model from the collection $\{ \boldsymbol{M}^{(s)} \} _{s\in \left\{ 1,...,l\right\} }$ according to some suitable selection criterion. For such purpose, for each assumed model $\boldsymbol{M}^{(s)},$ we should first determine the best parameter $\boldsymbol{\theta}_s$ fitting the sample and subsequently select the best fitted model from the collection. Then, given a set of observations, the model selection is performed in two steps: we first fit all the candidates models to the data, and then select the model with best trade-off between goodness of fit and complexity in terms of RP.

We next describe the first step of the model selection algorithm.
Let consider a fixed parametric model $\boldsymbol{M}^{(s)}$ modeling the true distribution underlying.
%First, we are going to consider the best parameter for this model. In our case, we are going to obtain this value via RP, as it was done in the previous section. Hence,
If the true distribution was known, the parameter that best fits the model $\boldsymbol{M}^{(s)},$  denoted by $\boldsymbol{\theta }_{\boldsymbol{g}, \alpha }^s,$ can be obtained by maximizing the theoretical averaged objective function  $H_{\alpha }(\boldsymbol{\theta })$ defined in Eq. (\ref{A}) under the $s$-model.
%That is, the measure of fairness the value
%$$ \sum_{i=1}^n R_{\alpha } (f_i(\cdot , \boldsymbol{\theta }_{\boldsymbol{g}, \alpha }^s), g_i(\cdot )),$$
%where $R_{\alpha } (f_i(\cdot , \boldsymbol{\theta }_{\boldsymbol{g}, \alpha }^s), g_i(\cdot ))$ has been defined in (\ref{2.1}), and we will select as the best model the one minimizing the previous expression. As developed in the previous section, this is equivalent to consider as the best model the one maximizing
%
%$$ W_{\alpha }\left( \boldsymbol{\theta }_{\boldsymbol{g}, \alpha }^s \right) .$$

Following the discussion in Section 2, if the true distribution underlying is unknown but we have a random sample $Y_1,...,Y_n,$ %a model $\boldsymbol{M}^{(s)}$ is assumed,
the best estimate of the true parameter based on the sample from the RP approach is the MRPE defined in (\ref{2.4.1}).

Once all candidate models are fitted to the observed data (or to the true distribution, if it is known), we should select the model with the best trade-off between fitness and complexity. Therefore, we need a measure of fairness between the best candidate for each model and the true distribution.
 The goodness of fit of a certain model $\boldsymbol{M}^{(s)}$ with associated densities $\boldsymbol{f}(\cdot, \boldsymbol{\theta}^s_g)$ and the best-fitting parameter $\boldsymbol{\theta}^s_g$ based on the RP can be quantified by the averaged objective function $H_{\alpha}(\boldsymbol{\theta}^s_g)$ given in Eq. (\ref{A}).
% It should be noted that some of the candidate models may fail to model the distribution of $Y_{1},...,Y_{n}.$ That is, \textcolor{red}{it could be that $\boldsymbol{\theta}_{\boldsymbol{g}, \alpha }^s$ does not satisfy $g_{i}= f_{i}(y,\boldsymbol{\theta }_{\boldsymbol{g}, \alpha }^s),$ for some (perhaps all of them) $s=1,...,n.$}
%, as $g_i$ is not known, for each $s=1, ..., l,$ we are going to estimate $\boldsymbol{\theta }_{\boldsymbol{g}, \alpha }^s$ by the vector $\widehat{\boldsymbol{\theta }}_{\alpha }^s$ maximizing $H_{n,\alpha }\left( \boldsymbol{\theta }\right).$
%Now, as $\boldsymbol{\theta }_{\boldsymbol{g}, \alpha }^s$ is not known and it is estimated in terms of the sample information, we are going to consider as the best model the one maximizing
%$$ W_{\alpha }\left( \widehat{\boldsymbol{\theta }}_{\alpha }^s \right) .$$
%However, this value depends on the true distribution generating the data $\boldsymbol{g},$ usually unknown.

As the true distribution is generally unknown, $\boldsymbol{\theta}^s_g$ is estimated by $\widehat{\boldsymbol{\theta }}_{\alpha }^s$. Hence, we can estimate $H_{\alpha}(\boldsymbol{\theta}^s_g)$ by $H_{\alpha}(\widehat{\boldsymbol{\theta }}_{\alpha }^s).$ But again $H_{\alpha }$ needs to be estimated, and the natural estimator is $H_{n, \alpha }( \widehat{\boldsymbol{\theta }}_{\alpha }^s).$ However, as the sample is used both for estimating the parameter and for estimating $H_{\alpha },$ it does not hold that
%As the true distribution is generally unknown, this averaged function needs to be estimated. And the estimated averaged RP-based functions $H_{n, \alpha }( \widehat{\boldsymbol{\theta }}_{\alpha }^s)$ based on the sample is a natural estimator. However, as the sample is used both for estimating the parameter and for estimating the expected value, it does not hold that

$$E_{Y_{1},...,Y_{n}}\left[ H_{n, \alpha }\left( \widehat{\boldsymbol{\theta }}_{\alpha }^s\right) \right] \ne E_{Y_{1},...,Y_{n}}\left[ H_{\alpha }\left( \widehat{\boldsymbol{\theta }}_{\alpha }^s \right) \right] .$$

Moreover, the estimation bias would depend on the model and consequently, we need to add a term correcting the bias caused by the model assumption.

The AIC criterion selects the model that minimizes
$$ - 2\sum_{i=1}^n \log f_i(y_i, \boldsymbol{\theta }) + 2 p = 2H_{n,0}\left( \boldsymbol{\theta }\right) + 2 p,  $$
 where $2p$ is the term correcting the bias. Following the same idea, we define the   $RP_{NH}-$Criterion as follows:
% However, if we take limits when $\alpha \rightarrow 0,$ we obtain
%$$ \lim_{\alpha \rightarrow 0} H_{n,\alpha }\left( \boldsymbol{\theta }\right) = {1\over n}\sum_{i=1}^n \log f_i(y_i, \boldsymbol{\theta }).$$
%In order to recover the AIC for $\alpha=0$, instead of maximizing $H_{n,\alpha }\left( \boldsymbol{\theta }\right)$ we are going to minimize $-H_{n,\alpha }\left( \boldsymbol{\theta }\right)$ with the corresponding correction term to avoid bias. %Hence, we consider the following criterion.

\begin{definition}\label{def-Renyi-criterion}
Let $\left\{ \left( M_{1}^{(s)},...,M_{n}^{(s)}\right) \right\} _{s\in \left\{ 1,..., l\right\} }$ be $l$ candidate models for the i.n.i.d.o. $Y_{1},...,Y_{n}$. The selected model $\left( M_{1}^{\ast },...,M_{n}^{\ast
}\right) $ according the {\bf $RP_{NH}-$Criterion} is the one satisfying

\begin{equation*}
\left( M_{1}^{\ast },...,M_{n}^{\ast }\right) =\min_{s\in \{ 1, ..., l\} }RP_{NH}\left( M_{1}^{(s)},...,M_{n}^{(s)},\widehat{\boldsymbol{\theta }}_{\alpha }^s\right) ,
\end{equation*}
where

\begin{equation}
RP_{NH}\left( M_{1}^{(s)},...,M_{n}^{(s)},\widehat{\boldsymbol{\theta }}_{\alpha }^s\right) =  H_{n, \alpha }\left( \widehat{\boldsymbol{\theta }}_{\alpha }^s\right) +\frac{1}{n}trace\left( \boldsymbol{\boldsymbol{\Omega }_{n}\left(
\widehat{\boldsymbol{\theta }}_{\alpha }^s\right) \Psi }_{n}^{-1}\left( \widehat{\boldsymbol{\theta }}_{\alpha }^s\right) \right) .  \label{CRP}
\end{equation}
\end{definition}

We can observe that

$$ \lim_{\alpha \rightarrow 0}RP_{NH}\left( M_{1}^{(s)},...,M_{n}^{(s)},\widehat{\boldsymbol{\theta }}_{\alpha }^s\right) = -{1\over n}\sum_{i=1}^n \log f_i(Y_i, \boldsymbol{\theta }) + {p\over n},$$
and hence we recover AIC criterion up to the multiplicative constant $2n.$

In order to justify the $RP_{NH}-$Criterion, %considered in Definition \ref{def-Renyi-criterion},
we shall establish that the estimated function $RP_{NH}\left( M_{1}^{(s)},...,M_{n}^{(s)}\right) $ quantifying the loss of choosing a model is an unbiased estimator of it theoretical version,
$E_{Y_{1},...,Y_{n}}\left[ H_{\alpha }\left( \widehat{\boldsymbol{\theta }}_{\alpha }^s\right) \right] .$
For this purpose, we shall assume the following additional regularity condition:
\begin{description}
\item[C8.\label{itm:C8}] The matrices $\boldsymbol{\Psi }_{n}^{-1}\left( \boldsymbol{\theta }\right) $ and $\boldsymbol{\Omega }_{n}\left( \boldsymbol{\theta }\right) $ are continuous for arbitrary $\boldsymbol{\theta \in \Theta }.$
\end{description}

\begin{theorem}
Assume that conditions {\bf C1-C8} hold. Then,
\begin{equation*}
E_{Y_{1},...,Y_{n}}\left[ RP_{NH}\left( M_{1}^{(s)},...,M_{n}^{(s)}, \widehat{\boldsymbol{\theta }}_{\alpha }^s \right) \right] =E_{Y_{1},...,Y_{n}}\left[ H_{\alpha }\left( \widehat{\boldsymbol{\theta }}_{\alpha }^s \right) \right] ,\forall s=1, ..., l.
\end{equation*}
\end{theorem}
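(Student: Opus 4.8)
The plan is to establish that the trace term in (\ref{CRP}) is precisely the (asymptotic) correction needed so that $H_{n,\alpha}(\widehat{\boldsymbol{\theta}}_\alpha^s)$ becomes an unbiased estimator of the theoretical loss $E[H_\alpha(\widehat{\boldsymbol{\theta}}_\alpha^s)]$. Equivalently, I must show the optimism bias
\[
E_{Y_1,\dots,Y_n}\left[H_\alpha(\widehat{\boldsymbol{\theta}}_\alpha^s)\right]-E_{Y_1,\dots,Y_n}\left[H_{n,\alpha}(\widehat{\boldsymbol{\theta}}_\alpha^s)\right]
\]
equals $\tfrac1n\,\mathrm{trace}\!\big(\boldsymbol{\Omega}_{n,\alpha}\boldsymbol{\Psi}_{n,\alpha}^{-1}\big)$. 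First I would add and subtract the value at the best-fitting parameter $\boldsymbol{\theta}_{\boldsymbol{g},\alpha}^s$, decomposing the integrand as
\[
\left[H_\alpha(\widehat{\boldsymbol{\theta}}_\alpha^s)-H_\alpha(\boldsymbol{\theta}_{\boldsymbol{g},\alpha}^s)\right]+\left[H_\alpha(\boldsymbol{\theta}_{\boldsymbol{g},\alpha}^s)-H_{n,\alpha}(\boldsymbol{\theta}_{\boldsymbol{g},\alpha}^s)\right]+\left[H_{n,\alpha}(\boldsymbol{\theta}_{\boldsymbol{g},\alpha}^s)-H_{n,\alpha}(\widehat{\boldsymbol{\theta}}_\alpha^s)\right].
\]
The middle bracket has zero expectation, since $\boldsymbol{\theta}_{\boldsymbol{g},\alpha}^s$ is deterministic and, by the remark following (\ref{2.5}), $E[H_{n,\alpha}(\boldsymbol{\theta})]=H_\alpha(\boldsymbol{\theta})$ pointwise.

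Next I would Taylor-expand the two remaining brackets to second order. Because $\boldsymbol{\theta}_{\boldsymbol{g},\alpha}^s$ minimizes $H_\alpha$, its gradient vanishes and the first bracket reduces to $\tfrac12(\widehat{\boldsymbol{\theta}}_\alpha^s-\boldsymbol{\theta}_{\boldsymbol{g},\alpha}^s)^T\boldsymbol{\Psi}_{n,\alpha}(\boldsymbol{\theta}_{\boldsymbol{g},\alpha}^s)(\widehat{\boldsymbol{\theta}}_\alpha^s-\boldsymbol{\theta}_{\boldsymbol{g},\alpha}^s)$ plus a third-order remainder, where condition {\bf C3} justifies identifying $\nabla^2 H_\alpha$ with $\boldsymbol{\Psi}_{n,\alpha}$. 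Symmetrically, since $\widehat{\boldsymbol{\theta}}_\alpha^s$ annuls the estimating equations of $H_{n,\alpha}$, expanding $H_{n,\alpha}(\boldsymbol{\theta}_{\boldsymbol{g},\alpha}^s)$ about $\widehat{\boldsymbol{\theta}}_\alpha^s$ turns the third bracket into $\tfrac12(\widehat{\boldsymbol{\theta}}_\alpha^s-\boldsymbol{\theta}_{\boldsymbol{g},\alpha}^s)^T\nabla^2 H_{n,\alpha}(\widehat{\boldsymbol{\theta}}_\alpha^s)(\widehat{\boldsymbol{\theta}}_\alpha^s-\boldsymbol{\theta}_{\boldsymbol{g},\alpha}^s)$ plus remainder. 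Since the empirical Hessian $\nabla^2 H_{n,\alpha}$ converges to $\boldsymbol{\Psi}_{n,\alpha}$ by a Ces\`aro weak law supported by condition {\bf C6}, the two brackets combine into
\[
(\widehat{\boldsymbol{\theta}}_\alpha^s-\boldsymbol{\theta}_{\boldsymbol{g},\alpha}^s)^T\boldsymbol{\Psi}_{n,\alpha}(\boldsymbol{\theta}_{\boldsymbol{g},\alpha}^s)(\widehat{\boldsymbol{\theta}}_\alpha^s-\boldsymbol{\theta}_{\boldsymbol{g},\alpha}^s)
\]
up to terms controlled by the third-derivative domination of condition {\bf C5}.

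I would then take expectations of this quadratic form via the cyclic-trace identity, writing it as $\mathrm{trace}\big(\boldsymbol{\Psi}_{n,\alpha}\,E[(\widehat{\boldsymbol{\theta}}_\alpha^s-\boldsymbol{\theta}_{\boldsymbol{g},\alpha}^s)(\widehat{\boldsymbol{\theta}}_\alpha^s-\boldsymbol{\theta}_{\boldsymbol{g},\alpha}^s)^T]\big)$, and invoke the asymptotic law (\ref{dist-asymp}): since $\sqrt{n}\,\boldsymbol{\Omega}_{n,\alpha}^{-1/2}\boldsymbol{\Psi}_{n,\alpha}(\widehat{\boldsymbol{\theta}}_\alpha^s-\boldsymbol{\theta}_{\boldsymbol{g},\alpha}^s)$ is asymptotically $N(\boldsymbol{0}_p,\boldsymbol{I}_p)$, the covariance of $\widehat{\boldsymbol{\theta}}_\alpha^s$ is $\tfrac1n\boldsymbol{\Psi}_{n,\alpha}^{-1}\boldsymbol{\Omega}_{n,\alpha}\boldsymbol{\Psi}_{n,\alpha}^{-1}$, so the quadratic form has expectation $\tfrac1n\,\mathrm{trace}(\boldsymbol{\Omega}_{n,\alpha}\boldsymbol{\Psi}_{n,\alpha}^{-1})$. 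Finally, consistency of $\widehat{\boldsymbol{\theta}}_\alpha^s$ together with the continuity assumed in condition {\bf C8} lets me replace the matrices evaluated at $\boldsymbol{\theta}_{\boldsymbol{g},\alpha}^s$ by those at $\widehat{\boldsymbol{\theta}}_\alpha^s$ inside the expectation, yielding exactly the correction term of (\ref{CRP}).

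The main obstacle is the rigor of the remainder control in the i.n.i.d.o. setting: the two second-order expansions are only asymptotically exact, so the asserted equality must be read as holding up to terms negligible relative to $1/n$, and making this precise requires the third-derivative bound {\bf C5} together with the Ces\`aro uniform integrability {\bf C6} to ensure the empirical Hessian and the remainder averages concentrate about their expectations uniformly in $n$. A second delicate point is that (\ref{dist-asymp}) supplies only the limiting distribution, not directly convergence of second moments; closing that gap needs a uniform-integrability argument (leaning again on {\bf C6}--{\bf C7}) so that $E[n(\widehat{\boldsymbol{\theta}}_\alpha^s-\boldsymbol{\theta}_{\boldsymbol{g},\alpha}^s)(\widehat{\boldsymbol{\theta}}_\alpha^s-\boldsymbol{\theta}_{\boldsymbol{g},\alpha}^s)^T]$ genuinely converges to $\boldsymbol{\Psi}_{n,\alpha}^{-1}\boldsymbol{\Omega}_{n,\alpha}\boldsymbol{\Psi}_{n,\alpha}^{-1}$ rather than only in law.
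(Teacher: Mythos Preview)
Your proposal is correct and follows essentially the same route as the paper: the three-bracket decomposition you write down is exactly what the paper performs, first expanding $H_\alpha(\widehat{\boldsymbol{\theta}}_\alpha^s)$ about $\boldsymbol{\theta}_{\boldsymbol{g},\alpha}^s$, then expanding $H_{n,\alpha}(\boldsymbol{\theta}_{\boldsymbol{g},\alpha}^s)$ about $\widehat{\boldsymbol{\theta}}_\alpha^s$, and using $E[H_{n,\alpha}(\boldsymbol{\theta}_{\boldsymbol{g},\alpha}^s)]=H_\alpha(\boldsymbol{\theta}_{\boldsymbol{g},\alpha}^s)$ to link the two expansions before substituting one into the other. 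The only cosmetic difference is that the paper computes the expectation of the quadratic form by invoking Corollary~2.1 of Dik and Gunst (distribution as $\sum_i\lambda_i Z_i^2$ with $\lambda_i$ the eigenvalues of $\boldsymbol{\Omega}_n\boldsymbol{\Psi}_n^{-1}$, so the mean is $\sum_i\lambda_i=\mathrm{trace}(\boldsymbol{\Omega}_n\boldsymbol{\Psi}_n^{-1})$), whereas you reach the same trace via the covariance of $\widehat{\boldsymbol{\theta}}_\alpha^s$ and the cyclic identity; and the paper, like you, treats the equality as asymptotic, leaving the remainders as $o_p(1)$ without the additional moment-convergence justification you flag.
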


\begin{proof}
Consider a fixed $s=1, ..., l.$ A Taylor expansion of $V_{i, \alpha }\left( \boldsymbol{\theta }\right) $ defined in Eq. (\ref{Aux}) around $\boldsymbol{\theta }_{\boldsymbol{g}, \alpha}^s$ and evaluated at $\widehat{\boldsymbol{\theta }}_{\alpha }^s$ gives

\begin{eqnarray*}
V_{i, \alpha }\left( \widehat{\boldsymbol{\theta }}_{\alpha }^s\right)
& = & V_{i, \alpha }\left( \boldsymbol{\theta }_{\boldsymbol{g}, \alpha }^s\right) +\left( \frac{\partial V_{i, \alpha }\left( \boldsymbol{\theta }\right) }{\partial \boldsymbol{\theta }}\right) _{\boldsymbol{\theta =\theta }_{\boldsymbol{g}, \alpha}^s}\left( \widehat{\boldsymbol{\theta }}_{\alpha }^s-\boldsymbol{\theta }_{\boldsymbol{g}, \alpha }^s\right)  \\
& & +\frac{1}{2}\left( \widehat{\boldsymbol{\theta }}_{\alpha }^s-\boldsymbol{\theta }_{\boldsymbol{g}, \alpha }^s\right) ^{T}\left( \frac{\partial ^{2}V_{i, \alpha }\left( \boldsymbol{\theta }\right) }{\partial \boldsymbol{\theta }\text{ }\partial \boldsymbol{\theta }^{T}}\right) _{\boldsymbol{\theta =\theta }_{\boldsymbol{g}, \alpha }^s}\left( \widehat{\boldsymbol{\theta }}_{\alpha }^s-\boldsymbol{\theta }_{\boldsymbol{g}, \alpha }^s\right) +o\left( \left\Vert \widehat{\boldsymbol{\theta }}_{\alpha }^s-\boldsymbol{\theta }_{\boldsymbol{g}, \alpha }^s\right\Vert ^{2}\right)  \\
& = & V_{i, \alpha }\left( \boldsymbol{\theta }_{\boldsymbol{g}, \alpha }^s\right) +\left( \frac{\partial V_{i, \alpha }\left( \boldsymbol{\theta }\right) }{\partial \boldsymbol{\theta }}\right) _{\boldsymbol{\theta =\theta }_{\boldsymbol{g}, \alpha }^s}\left( \widehat{\boldsymbol{\theta }}_{\alpha }^s-\boldsymbol{\theta }_{\boldsymbol{g}, \alpha }^s\right)  \\
& & +\frac{1}{2}\left( \widehat{\boldsymbol{\theta }}_{\alpha }^s-\boldsymbol{\theta }_{\boldsymbol{g}, \alpha }^s\right) ^{T} \boldsymbol{J}_{\tau }^{(i)}\left( \boldsymbol{\theta }_{\boldsymbol{g}, \alpha }^s\right) \left(
\widehat{\boldsymbol{\theta }}_{\alpha }^s-\boldsymbol{\theta }_{\boldsymbol{g}, \alpha }^s\right) + o\left( \left\Vert \widehat{\boldsymbol{\theta }}_{\alpha }^s-\boldsymbol{\theta }_{\boldsymbol{g}, \alpha }^s\right\Vert ^{2}\right) .
\end{eqnarray*}

Summing over $i$ and dividing by $n$, taking into account that $\boldsymbol{\theta }_{\boldsymbol{g}, \alpha }^s$ maximizes $H_{\alpha }(\boldsymbol{\theta}),$ we get

\begin{equation*}
H_{\alpha }( \widehat{\boldsymbol{\theta }}_{\alpha }^s) = H_{\alpha }\left( \boldsymbol{\theta }_{\boldsymbol{g}, \alpha }^s\right) - \frac{1}{2}\left( \widehat{\boldsymbol{\theta }}_{\alpha }^s-\boldsymbol{\theta }_{\boldsymbol{g}, \alpha }^s\right) ^{T}\boldsymbol{\Psi }_{n}\left( \boldsymbol{\theta }_{\boldsymbol{g}, \alpha }^s\right) \left( \widehat{\boldsymbol{\theta }}_{\alpha }^s-\boldsymbol{\theta }_{\boldsymbol{g}, \alpha }^s\right) +o\left( \left\Vert \widehat{\boldsymbol{\theta }}_{\alpha }^s-\boldsymbol{\theta }_{\boldsymbol{g}, \alpha }^s\right\Vert^{2}\right)
\end{equation*}
and hence,

\begin{equation}\label{eq-aux}
E_{Y_{1},...,Y_{n}}\left[ nH_{\alpha }\left( \widehat{\boldsymbol{\theta }}_{\alpha }^s\right) \right] = n H_{\alpha }\left( \boldsymbol{\theta }_{\boldsymbol{g}, \alpha }^s\right) - \frac{1}{2}E_{Y_{1},...,Y_{n}}\left[ \sqrt{n}\left( \widehat{\boldsymbol{\theta }}_{\alpha }^s-\boldsymbol{\theta }_{\boldsymbol{g}, \alpha }^s\right) ^{T}\boldsymbol{\Psi }_{n}\left( \boldsymbol{\theta }_{\boldsymbol{g}, \alpha }^s\right) \sqrt{n}\left( \widehat{\boldsymbol{\theta }}_{\alpha }^s-\boldsymbol{\theta }_{\boldsymbol{g}, \alpha }^s\right) \right] +o_{p}(1).
\end{equation}

But by Eq. (\ref{dist-asymp}), and applying Corollary 2.1 in \cite{digu85}, we have

\begin{equation*}
\sqrt{n}\left( \widehat{\boldsymbol{\theta }}_{\alpha }^s-\boldsymbol{\theta }_{\boldsymbol{g}, \alpha }^s\right) ^{T}\boldsymbol{\Psi }_{n}\left( \boldsymbol{\theta }_{\boldsymbol{g}, \alpha }^s\right) \sqrt{n}\left( \widehat{\boldsymbol{\theta }}_{\alpha }^s-\boldsymbol{\theta }_{\boldsymbol{g}, \alpha }^s\right) \underset{n\longrightarrow \infty }{\overset{\mathcal{L}}{\longrightarrow }}\dsum_{i=1}^{k}\lambda _{i}(\boldsymbol{\theta }_{\boldsymbol{g}, \alpha }^s)Z_{i}^{2},
\end{equation*}
where $\lambda _{1}(\boldsymbol{\theta }_{\boldsymbol{g}, \alpha }^s),...,\lambda _{n}(\boldsymbol{\theta }_{\boldsymbol{g}, \alpha }^s)$ are the eigenvalues of the matrix
\begin{equation*}
\boldsymbol{\Psi }_{n}\left( \boldsymbol{\theta }_{\boldsymbol{g}, \alpha }^s\right) \boldsymbol{\Psi }_{n}\left( \boldsymbol{\theta }_{\boldsymbol{g}, \alpha }^s\right) ^{-1}\boldsymbol{\Omega }_{n}\left( \boldsymbol{\theta }_{\boldsymbol{g}, \alpha }^s\right) \boldsymbol{\Psi }_{n}\left( \boldsymbol{\theta }_{\boldsymbol{g}, \alpha }^s\right) ^{-1}=\boldsymbol{\Omega }_{n}\left( \boldsymbol{\theta }_{\boldsymbol{g}, \alpha }^s\right) \boldsymbol{\Psi }_{n}\left( \boldsymbol{\theta }_{\boldsymbol{g}, \alpha }^s\right) ^{-1}
\end{equation*}
$^{{}}$and $Z_{1},...,Z_{k}$ are independent normal random variables with mean zero and variance 1. Therefore,

\begin{eqnarray*}
E_{Y_{1},...,Y_{n}} &\left[ \sqrt{n}\left( \widehat{\boldsymbol{\theta }}_{\alpha }^s-\boldsymbol{\theta }_{\boldsymbol{g}, \alpha }^s\right) ^{T}\boldsymbol{\Psi }_{n}\left( \boldsymbol{\theta }_{\boldsymbol{g}, \alpha }^s\right) \sqrt{n}\left( \widehat{\boldsymbol{\theta }}_{\alpha }^s-\boldsymbol{\theta }_{\boldsymbol{g}, \alpha }^s\right) \right]\\
 = & \dsum_{i=1}^{k}\lambda _{i}(\boldsymbol{\theta }_{\boldsymbol{g}, \alpha }^s)+o_{P}(1) \\
 = & trace\left( \boldsymbol{\Omega }_{n}\left( \boldsymbol{\theta }_{\boldsymbol{g}, \alpha }^s\right) \boldsymbol{\Psi }_{n}\left( \boldsymbol{\theta }_{\boldsymbol{g}, \alpha }^s\right)^{-1}\right) +o_{P}(1).
\end{eqnarray*}

On the other hand, taking into account that $\widehat{\boldsymbol{\theta }}_{\alpha }^s$ maximizes $H_{n, \alpha }\left( \boldsymbol{\theta } \right) ,$ a Taylor expansion of $H_{n, \alpha }\left( \boldsymbol{\theta }\right) $ at $\widehat{\boldsymbol{\theta }}_{\alpha }^s$ and evaluated at $\boldsymbol{\theta }_{\boldsymbol{g}, \alpha }^s$ gives

\begin{equation*}
H_{n,\alpha }\left( \boldsymbol{\theta }_{\boldsymbol{g}, \alpha }^s\right) = H_{n,\alpha }\left( \widehat{\boldsymbol{\theta }}_{\alpha }^s\right) +\frac{1}{2}\left( \boldsymbol{\theta }_{\boldsymbol{g}, \alpha }^s-\widehat{\boldsymbol{\theta }}_{\alpha }^s \right)^{T}\left( \frac{\partial ^{2}H_{n,\alpha }\left( \boldsymbol{\theta }\right) }{\partial \boldsymbol{\theta }\text{ }\partial \boldsymbol{\theta }^{T}}\right) _{\boldsymbol{\theta =}\widehat{\boldsymbol{\theta }}_{\alpha }^s}\left( \boldsymbol{\theta }_{\boldsymbol{g}, \alpha }^s-\widehat{\boldsymbol{\theta }}_{\alpha }^s\right) +o\left( \left\Vert \boldsymbol{\theta }_{\boldsymbol{g}, \alpha }^s-\widehat{\boldsymbol{\theta }}_{\alpha }^s\right\Vert ^{2}\right) .
\end{equation*}

But then, multiplying by $n$ and considering the expected values,

\begin{eqnarray*}
n H_{\alpha }\left( \boldsymbol{\theta }_{\boldsymbol{g}, \alpha }^s\right)
& = & E_{Y_{1},...,Y_{n}}\left[ nH_{n,\alpha }\left( \boldsymbol{\theta }_{\boldsymbol{g}, \alpha }^s\right) \right] = E_{Y_{1},...,Y_{n}}\left[ nH_{n,\alpha }\left( \widehat{\boldsymbol{\theta }}_{\alpha }^s\right) \right]  \\
& & +\frac{1}{2}E_{Y_{1},...,Y_{n}}\left[ \sqrt{n}\left( \boldsymbol{\theta }_{\boldsymbol{g}, \alpha }^s-\widehat{\boldsymbol{\theta }}_{\alpha }^s\right) ^{T}\left( \frac{\partial ^{2}H_{n,\alpha }\left( \boldsymbol{\theta }\right) }{\partial \boldsymbol{\theta }\text{ }\partial \boldsymbol{\theta }^{T}}\right) _{\boldsymbol{\theta =}\widehat{\boldsymbol{\theta }}_{\alpha }^s}\sqrt{n}\left( \boldsymbol{\theta }_{\boldsymbol{g}, \alpha }^s-\widehat{\boldsymbol{\theta }}_{\alpha }^s\right) \right] +o_{p}(1).
\end{eqnarray*}

Besides,

\begin{equation}\label{Ecuacion}
\left( \frac{\partial ^{2}H_{n,\alpha }\left( \boldsymbol{\theta }\right) }{\partial \boldsymbol{\theta }\text{ }\partial \boldsymbol{\theta }^{T}}\right) _{\boldsymbol{\theta =}\widehat{\boldsymbol{\theta }}_{\alpha }^s}\underset{n\longrightarrow \infty }{\overset{\mathcal{P}}{\longrightarrow }} - \boldsymbol{\Psi }_{n}\left( \boldsymbol{\theta }_{\boldsymbol{g}, \alpha }^s\right) .
\end{equation}
by the continuity of $\boldsymbol{\Psi }_{n}.$ Hence, substituting in (\ref{eq-aux})

\small{
\begin{align*}
 E_{Y_{1},...,Y_{n}}&\left[ nH_{\alpha }\left( \widehat{\boldsymbol{\theta }}_{\alpha }^s\right) \right] \\
  = & n H_{\alpha }\left( \boldsymbol{\theta }_{\boldsymbol{g}, \alpha }^s\right) - \frac{1}{2}E_{Y_{1},...,Y_{n}}\left[ \sqrt{n}\left( \widehat{\boldsymbol{\theta }}_{\alpha }^s-\boldsymbol{\theta }_{\boldsymbol{g}, \alpha }^s\right) ^{T}\boldsymbol{\Psi }_{n}\left( \boldsymbol{\theta }_{\boldsymbol{g}, \alpha }^s\right) \sqrt{n}\left( \widehat{\boldsymbol{\theta }}_{\alpha }^s-\boldsymbol{\theta }_{\boldsymbol{g}, \alpha }^s\right) \right] +o_{p}(1) \\
 = & E_{Y_{1},...,Y_{n}}\left[ nH_{n,\alpha }\left( \widehat{\boldsymbol{\theta }}_{\alpha }^s\right) \right] - \frac{1}{2}E_{Y_{1},...,Y_{n}}\left[ \sqrt{n}\left( \widehat{\boldsymbol{\theta }}_{\alpha }^s-\boldsymbol{\theta }_{\boldsymbol{g}, \alpha }^s\right) ^{T}\boldsymbol{\Psi }_{n}\left( \boldsymbol{\theta }_{\boldsymbol{g}, \alpha }^s\right) \sqrt{n}\left( \widehat{\boldsymbol{\theta }}_{\alpha }^s-\boldsymbol{\theta }_{\boldsymbol{g}, \alpha }^s\right) \right] +o_{p}(1) \\
 & - \frac{1}{2}E_{Y_{1},...,Y_{n}}\left[ \sqrt{n}\left( \boldsymbol{\theta }_{\boldsymbol{g}, \alpha }^s-\widehat{\boldsymbol{\theta }}_{\alpha }^s\right) ^{T}\boldsymbol{\Psi }_{n}\left( \boldsymbol{\theta }_{\boldsymbol{g}, \alpha }^s\right) \sqrt{n}\left( \boldsymbol{\theta }_{\boldsymbol{g}, \alpha }^s-\widehat{\boldsymbol{\theta }}_{\alpha }^s\right) \right] +o_{p}(1) \\
 = & E_{Y_{1},...,Y_{n}}\left[ nH_{n,\alpha }\left( \widehat{\boldsymbol{\theta }}_{\alpha }^s\right) \right] - E_{Y_{1},...,Y_{n}}\left[ \sqrt{n}\left( \widehat{\boldsymbol{\theta }}_{\alpha }^s-\boldsymbol{\theta }_{\boldsymbol{g}, \alpha }^s\right) ^{T}\boldsymbol{\Psi }_{n}\left( \boldsymbol{\theta }_{\boldsymbol{g}, \alpha }^s\right) \sqrt{n}\left( \widehat{\boldsymbol{\theta }}_{\alpha }^s-\boldsymbol{\theta }_{\boldsymbol{g}, \alpha }^s\right) \right] +o_{p}(1),
\end{align*}}
and thus,

\begin{eqnarray*}
E_{Y_{1},...,Y_{n}}\left[ H_{\alpha }\left( \widehat{\boldsymbol{\theta }}_{\alpha }^s\right) \right] & = & E_{Y_{1},...,Y_{n}}\left[ H_{n,\alpha }\left( \widehat{\boldsymbol{\theta }}_{\alpha }^s\right) \right]  \\
& & - \frac{1}{n}E_{Y_{1},...,Y_{n}}\left[ \sqrt{n}\left( \boldsymbol{\theta }_{\boldsymbol{g}, \alpha }^s-\widehat{\boldsymbol{\theta }}_{\alpha }^s\right) ^{T}\boldsymbol{\Psi }_{n}\left( \boldsymbol{\theta }_{\boldsymbol{g}, \alpha }^s\right) \sqrt{n}\left( \boldsymbol{\theta }_{\boldsymbol{g}, \alpha }^s-\widehat{\boldsymbol{\theta }}_{\alpha }^s\right) \right] +o_{p}(1) \\
& = & E_{Y_{1},...,Y_{n}}\left[ H_{n, \alpha }\left( \widehat{\boldsymbol{\theta }}_{\alpha }^s\right) \right] - \frac{1}{n}trace\left( \boldsymbol{\Omega }_{n}\left( \boldsymbol{\theta }_{\boldsymbol{g}, \alpha }^s\right) \boldsymbol{\Psi }_{n}^{-1}\left( \boldsymbol{\theta }_{\boldsymbol{g}, \alpha }^s\right) \right).
\end{eqnarray*}

Hence, the result holds.
\end{proof}

%In order to clarify the different terms appearing in $RP_{NH},$ we are going to
We next develop explicit expressions for the $RP_{NH}$-criterion under the MLRM.

%\begin{example}
\subsection{Example: The RP-based model selection under the multiple linear regression model \label{sec:RPMLRM}}

We consider the MLRM defined in Section 2.1.

\begin{equation}
Y_{i}=\boldsymbol{X}_{i}^{T}\boldsymbol{\beta }+\varepsilon _{i},\quad i=1,\dots ,n.  \label{eq:linear}
\end{equation}

We consider several models $\{ (M^{(s)}_1, ..., M^{(s)}_n)\}_{s=1, ..., l}$ where each model differs on the parameter $\boldsymbol{\beta }$ considered. For example, consider  four explanatory variables $(X_1, X_2, X_3, X_4)$ and four different models given by
\begin{align*}
	 (M^{(1)}_1, ..., M^{(1)}_n) &\equiv Y_i = \beta_0 + \beta_1 X_1 + \beta_2 X_2 + \beta_3 X_3 + \epsilon_i ,\\
	 (M^{(2)}_1, ..., M^{(2)}_n) &\equiv Y_i = \beta_0 + \beta_1 X_1 + \beta_2 X_2 + \beta_4 X_4 + \epsilon_i \\
	(M^{(3)}_1, ..., M^{(3)}_n) &\equiv Y_i = \beta_0 + \beta_1 X_1 + \beta_3 X_3 + \beta_4 X_4 + \epsilon_i ,\\ (M^{(4)}_1, ..., M^{(4)}_n) &\equiv Y_i = \beta_0 + \beta_2 X_2 + \beta_3 X_3 + \beta_4 X_4 + \epsilon_i.
\end{align*}

Each of the models has five parameters that need to be estimated. Let us then determine the corresponding values of $RP_{NH}\left( M_{1}^{(s)},...,M_{n}^{(s)}, \widehat{\boldsymbol{\theta }}_{\alpha }^s \right) $ for $s=1, 2, 3, 4.$

As stated in Section \ref{sec:MLRM}, for each $s=1, 2, 3, 4,$ the estimators of $\widehat{\boldsymbol{\beta }}_{\alpha }^s$ and $\widehat{\sigma }_{\alpha }^s$ are the solutions of the system

\begin{equation}
\begin{array}{l}
{\sum\limits_{i=1}^{n}}\exp \left( -\frac{\alpha }{2}\left( \frac{Y_{i}-\boldsymbol{X}_{s,i}^{T}\boldsymbol{\beta }}{\sigma }\right) ^{2}\right) \left( \frac{Y_{i}-\boldsymbol{X}_{s, i}^{T}\boldsymbol{\beta }}{\sigma }\right) \boldsymbol{X}_{s, i}=\boldsymbol{0}_{4} \\
{\sum\limits_{i=1}^{n}}\exp \left( -\frac{\alpha }{2}\left( \frac{Y_{i}-\boldsymbol{X}_{s, i}^{T}\boldsymbol{\beta }}{\sigma }\right) ^{2}\right) \left\{ \left( \frac{Y_{i}-\boldsymbol{X}_{s, i}^{T}\boldsymbol{\beta }}{\sigma
}\right) ^{2}-\frac{1}{1+\alpha }\right\} =0
\end{array}
,
\end{equation}
where $X_{s, i}$ corresponds to the values of observation $i$ restricted to the variables appearing in model $s.$ Note that, although $\boldsymbol{\beta}$ has a different meaning for the different models, this is not the case of $\sigma.$ However, the  estimation of   $\sigma$ is different for the different models and so this estimation is denoted for  by $\widehat{\sigma }_{\alpha }^s$ for the $s$-th model.

At $\alpha =0$, we have that the model parameters can be explicitly obtained as
\begin{equation*}
\widehat{\boldsymbol{\beta }}_{0}^s=(\mathbb{X}_s^{T}\mathbb{X}_s\mathbf{)}^{-1}\mathbb{X}_s^{T}\mathbf{Y}\text{ \ and \ }(\widehat{\sigma }_{0}^s)^{2}=\frac{1}{n}{\sum\limits_{i=1}^{n}}\left( Y_{i}-\boldsymbol{X}_{s,i}^{T}\widehat{\boldsymbol{\beta }}_{0}\right) ^{2}.
\end{equation*}

%This would provide us with the estimations of the parameters for the different models.
Thus, according to Eq. (\ref{2.4}),
$$ H_{n, \alpha }(\widehat{\boldsymbol{\beta}}, \widehat{\sigma }) = {1\over \alpha }{1\over n} \sum_{i=1}^n- k\widehat{\sigma }^{-\frac{\alpha }{\alpha +1}}\exp \left( -\frac{\alpha }{2}\left( \frac{Y_{i}-\boldsymbol{X}_{i}^{T}\widehat{\boldsymbol{\beta }}}{\widehat{\sigma }}\right)^{2}\right) +{1\over \alpha },$$ with $k$ as defined in (\ref{K1}).

Next, let us obtain expressions of $\boldsymbol{\Psi }_{s, n}\left( \boldsymbol{\beta }^s,\sigma \right) $ and $\boldsymbol{\Omega }_{s, n}\left( \boldsymbol{\beta }^s,\sigma \right) .$ Note that these matrices also depend on the model $s$. Applying again the results of the previous section, we obtain

\begin{equation}\label{EqsSR}
\begin{aligned}
	\boldsymbol{\Psi }_{s,n}\left( \boldsymbol{\beta }^s,\sigma \right) &=K_{1}\left( \alpha +1\right) ^{-\frac{3}{2}}\left[
	\begin{array}{cc} \frac{1}{n}\mathbb{X}_s^{T}\mathbb{X}_s & 0 \\ 0 & \frac{2}{\alpha +1} \end{array}
	\right] , \\
	 \boldsymbol{\Omega }_{s, n}\left( \boldsymbol{\beta }^s,\sigma \right) &= K_{1}^{2}\sigma ^{2}\frac{1}{\left( 2\alpha +1\right) ^{3/2}}\left[
	\begin{array}{cc} \frac{1}{n}\mathbb{X}_s^{T}\mathbb{X}_s & 0 \\ 0 & \frac{(3\alpha ^{2}+4\alpha +2)}{2(\alpha +1)(2\alpha +1)}\end{array}
	\right] ,
\end{aligned}
\end{equation}
where $K_1$ was defined in (\ref{K1}). Note that these matrices have dimension $(p+1)\times (p+1)$ where $p$ is the dimension of vector $\boldsymbol{\beta }$ for each model. In our example, $p=4$ and therefore,
$$ \boldsymbol{\Omega }_{n}\left( \widehat{\boldsymbol{\theta }}_{\alpha }^s\right) \boldsymbol{\Psi }_{s, n}^{-1}\left( \widehat{\boldsymbol{\beta }}_{\alpha }^s,\widehat{\sigma }_{\alpha }^s\right) = (\widehat{\sigma }_{\alpha }^s)^{2}K_{1}\frac{\left( \alpha +1\right) ^{\frac{3}{2}}}{\left( 2\alpha +1\right) ^{\frac{3}{2}}} \left[ \begin{array}{cc} \boldsymbol{I}_{p\times p} & \boldsymbol{0} \\ \boldsymbol{0}^T & \frac{3\alpha ^{2}+4\alpha +2}{(\alpha +1)^2(2\alpha +1)} \end{array} \right] ,$$
and hence,
\begin{equation*}
trace\left( \boldsymbol{\Omega }_{n}\left( \widehat{\boldsymbol{\theta }}_{\alpha }^s\right) \boldsymbol{\Psi }_{s, n}^{-1}\left( \widehat{\boldsymbol{\beta }}_{\alpha }^s,\widehat{\sigma }_{\alpha }^s\right) \right) = (\widehat{\sigma }_{\alpha }^s)^{2}K_{1}\left( p\frac{\left( \alpha +1\right) ^{\frac{3}{2}}}{\left( 2\alpha +1\right) ^{\frac{3}{2}}}+\frac{\left( \alpha +1\right) ^{\frac{1}{2}}\left( 3\alpha ^{2}+4\alpha +2\right) }{2\left( 2\alpha +1\right) ^{5/2}}\right) .
\end{equation*}

Therefore, applying the $RP_{NH}-$Criterion defined in (\ref{def-Renyi-criterion})
\begin{align}
RP_{NH}(M^{(s)}_{1},..., M^{(s)}_{n}, & \widehat{\boldsymbol{\beta }}_{\alpha }^s, \widehat{\sigma }_{\alpha }^2) \nonumber \\
 = & - {1\over \alpha }\left( \frac{1+\alpha }{2\pi }\right) ^{\frac{\alpha }{2(\alpha +1)}}\frac{1}{n}\sum_{i=1}^{n}(\widehat{\sigma }_{\alpha }^s)^{-\frac{\alpha }{\alpha +1}}\exp \left( -\frac{\alpha }{2}\left( \frac{Y_{i}-\boldsymbol{X}_{s, i}^{T}\widehat{\boldsymbol{\beta }}_{\alpha }^s}{\widehat{\sigma }_{\alpha }^s}\right) ^{2}\right) \nonumber \\
 & + {1\over \alpha }
+ \frac{1}{n}(\widehat{\sigma }_{\alpha }^s)^{2}K_{1}\left( p\frac{\left( \alpha +1\right) ^{\frac{3}{2}}}{\left( 2\alpha +1\right) ^{\frac{3}{2}}}+ \frac{\left( \alpha +1\right) ^{\frac{1}{2}}\left( 3\alpha ^{2}+4\alpha
+2\right) }{2\left( 2\alpha +1\right) ^{5/2}}\right) .\label{ali}
\end{align}

Finally, we select the model with minimum, in $s$, $RP_{NH}(M^{(s)}_{1},..., M^{(s)}_{n}, \widehat{\boldsymbol{\beta }}_{\alpha }^s, \widehat{\sigma }_{\alpha }^s)$ as the most appropriate model among the four candidates.
%\end{example}

\section{The restricted model}

Let us consider a particular case of the model selection problem. In some situations it is interesting to compare a full model based on $\boldsymbol{\theta }\in \boldsymbol{\Theta }\subset \mathbb{R}^{p}$, with $p$
parameters with other restricted models where the parameter has to satisfy additionally linear constraints of the form
\begin{equation}
\left\{ \boldsymbol{\theta }\in \boldsymbol{\Theta }/\text{ }\boldsymbol{m}(\boldsymbol{\theta })=\boldsymbol{0}_{r}\right\} ,  \label{3}
\end{equation}
where $\boldsymbol{0}_{r}$ denotes the null vector of dimension $r$ with $r<p$ and $\boldsymbol{m}:\mathbb{R}^{p}\rightarrow \mathbb{R}^{r}$ is a vector-valued function such that the $p\times r$ matrix
\begin{equation}
\mathbf{M}\left( \boldsymbol{\theta }\right) =\frac{\partial \boldsymbol{m}^{T}(\boldsymbol{\theta )}}{\partial \boldsymbol{\theta }}  \label{4}
\end{equation}
exists and is continuous in $\boldsymbol{\theta },$ and rank$\left( \mathbf{M}\left( \boldsymbol{\theta }\right) \right) =r, \forall \boldsymbol{\theta }\in \boldsymbol{\Theta }.$ Related to the divergence-based restricted estimation, in \cite{bamamapa18} the restricted minimum density power divergence estimator was defined. Later, in \cite{cajamapa23} the restricted MRPE for general populations was given.

Given a candidate model, we have already established that the best fitting parameter for this model based on the RP is defined by
\begin{equation*}
\boldsymbol{\theta }_{\boldsymbol{g}, \alpha }=\arg \min_{\boldsymbol{\theta \in \Theta \subset }\mathbb{R}^{p}}{} H_{\alpha }(\boldsymbol{\theta }),
\end{equation*}
where $H_{\alpha }(\boldsymbol{\theta })$ was defined in Eq. (\ref{A}). On the other hand, applying the same criterion for the restricted model, we obtain that the best-fitting parameter for the restricted model is given by

\begin{equation*}
\boldsymbol{\theta }_{\boldsymbol{g}, \alpha }^R=\arg \min_{\boldsymbol{\theta \in }\Theta /\text{ }\boldsymbol{m}(\boldsymbol{\theta })=\boldsymbol{0}_{r}}{} H_{\alpha }(\boldsymbol{\theta }).
\end{equation*}

Following similar arguments than in Section 2, we defined the restricted MRPE as follows.

\begin{definition}
	Given $Y_{1},...,Y_{n}$ be i.n.i.d.o., the {\bf restricted MRPE} (RMRPE), $\widetilde{\boldsymbol{\theta }}_{\alpha },$ is given by
	\begin{equation}
		\widetilde{\boldsymbol{\theta }}_{\alpha }=\arg \min_{\boldsymbol{\theta} \in \Theta /\boldsymbol{m}(\boldsymbol{\theta })=\boldsymbol{0}_{r}} H_{n, \alpha }(\boldsymbol{\theta }),  \label{3.4.1}
	\end{equation}
	with $H_{n, \alpha }(\boldsymbol{\theta })$ defined in (\ref{2.4}) for $\alpha >0$ and in (\ref{2.5}) for $\alpha = 0.$
\end{definition}

Note that

\begin{equation*}
H_{n, \alpha }(\widehat{\boldsymbol{\theta }}_{\alpha })\leq H_{n, \alpha }(\widetilde{\boldsymbol{\theta }}_{\alpha }).
\end{equation*}

The following theorem presents a representation of the RMPRE.

\begin{theorem}
Assume conditions {\bf C1}-{\bf C8} and suppose that $\boldsymbol{\theta }_{\boldsymbol{g}, \alpha }$ satisfies the conditions of the restricted model. Then,

\begin{equation*}
n^{1/2}(\widetilde{\boldsymbol{\theta }}_{\alpha }-\boldsymbol{\theta }_{\boldsymbol{g}, \alpha })=\boldsymbol{P}^{\ast }(\boldsymbol{\theta }_{\boldsymbol{g}, \alpha })n^{1/2}\left( \frac{\partial H_{n, \alpha }(\boldsymbol{\theta })}{\partial \boldsymbol{\theta }}\right) _{\boldsymbol{\theta =\theta }_{\boldsymbol{g}, \alpha }}+o_{p}(1),
\end{equation*}
being

\begin{equation}
\boldsymbol{P}^{\ast }(\boldsymbol{\theta }_{\boldsymbol{g}, \alpha }) = \boldsymbol{Q}_{\alpha }(\boldsymbol{\theta }_{\boldsymbol{g}, \alpha })\boldsymbol{M}(\boldsymbol{\theta }_{\boldsymbol{g}, \alpha })^{T}\boldsymbol{\Psi }_{n}\left( \boldsymbol{\theta }_{\boldsymbol{g}, \alpha }\right) ^{-1} - \boldsymbol{\Psi }_{n}\left( \boldsymbol{\theta }_{\boldsymbol{g}, \alpha }\right) ^{-1},  \label{P}
\end{equation}
with

\begin{equation}
\boldsymbol{Q}_{\alpha }(\boldsymbol{\theta }_{\boldsymbol{g}, \alpha })=\boldsymbol{\Psi }_{n}\left( \boldsymbol{\theta }_{\boldsymbol{g}, \alpha }\right) ^{-1}\boldsymbol{M}(\boldsymbol{\theta }_{\boldsymbol{g}, \alpha })\left[ \boldsymbol{M}(\boldsymbol{\theta }_{\boldsymbol{g}, \alpha })^{T}\boldsymbol{\Psi }_{n}\left( \boldsymbol{\theta }_{\boldsymbol{g}, \alpha }\right) ^{-1}\boldsymbol{M}(\boldsymbol{\theta }_{\boldsymbol{g}, \alpha })\right] ^{-1}. \label{Q}
\end{equation}
\end{theorem}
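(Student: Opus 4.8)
The plan is to treat $\widetilde{\boldsymbol{\theta}}_{\alpha}$ as the solution of a constrained optimization problem and to linearize its Karush--Kuhn--Tucker conditions around $\boldsymbol{\theta}_{\boldsymbol{g},\alpha}$, following the standard route for restricted $M$-estimators. Since $\widetilde{\boldsymbol{\theta}}_{\alpha}$ minimizes $H_{n,\alpha}(\boldsymbol{\theta})$ subject to $\boldsymbol{m}(\boldsymbol{\theta})=\boldsymbol{0}_{r}$, the Lagrange multiplier theorem provides a vector $\boldsymbol{\lambda}_{n}\in\mathbb{R}^{r}$ with
$$
\frac{\partial H_{n,\alpha}(\widetilde{\boldsymbol{\theta}}_{\alpha})}{\partial \boldsymbol{\theta}} + \boldsymbol{M}(\widetilde{\boldsymbol{\theta}}_{\alpha})\boldsymbol{\lambda}_{n} = \boldsymbol{0}_{p}, \qquad \boldsymbol{m}(\widetilde{\boldsymbol{\theta}}_{\alpha}) = \boldsymbol{0}_{r}.
$$
The preliminary step is to argue that $\widetilde{\boldsymbol{\theta}}_{\alpha}$ is $\sqrt{n}$-consistent, $\widetilde{\boldsymbol{\theta}}_{\alpha}-\boldsymbol{\theta}_{\boldsymbol{g},\alpha}=O_{p}(n^{-1/2})$, so that the first-order expansions below are legitimate; here I would lean on exactly the regularity conditions C2--C6 that underpin the asymptotic normality of the unrestricted estimator, and on the central limit theorem under C7, which moreover gives $\boldsymbol{S}_{n}=O_{p}(1)$ for the scaled score introduced below.

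Next I would Taylor-expand both equations to first order about $\boldsymbol{\theta}_{\boldsymbol{g},\alpha}$. In the stationarity equation, the Hessian $\partial^{2}H_{n,\alpha}(\boldsymbol{\theta})/\partial\boldsymbol{\theta}\,\partial\boldsymbol{\theta}^{T}$, evaluated near $\boldsymbol{\theta}_{\boldsymbol{g},\alpha}$, converges in probability to $\boldsymbol{\Psi}_{n}(\boldsymbol{\theta}_{\boldsymbol{g},\alpha})$ (positive definite by C4) through the Ces\`{a}ro law of large numbers granted by C3 and C6, while $\boldsymbol{M}(\widetilde{\boldsymbol{\theta}}_{\alpha})\to\boldsymbol{M}(\boldsymbol{\theta}_{\boldsymbol{g},\alpha})$ by the continuity of $\boldsymbol{M}(\boldsymbol{\theta})$ assumed in (\ref{4}). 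In the constraint equation I would use that $\boldsymbol{\theta}_{\boldsymbol{g},\alpha}$ satisfies the restriction by hypothesis, so $\boldsymbol{m}(\boldsymbol{\theta}_{\boldsymbol{g},\alpha})=\boldsymbol{0}_{r}$, and the zeroth-order terms cancel against $\boldsymbol{m}(\widetilde{\boldsymbol{\theta}}_{\alpha})=\boldsymbol{0}_{r}$, leaving only $\boldsymbol{M}(\boldsymbol{\theta}_{\boldsymbol{g},\alpha})^{T}(\widetilde{\boldsymbol{\theta}}_{\alpha}-\boldsymbol{\theta}_{\boldsymbol{g},\alpha})$. Abbreviating $\boldsymbol{\Psi}_{n}=\boldsymbol{\Psi}_{n}(\boldsymbol{\theta}_{\boldsymbol{g},\alpha})$, $\boldsymbol{M}=\boldsymbol{M}(\boldsymbol{\theta}_{\boldsymbol{g},\alpha})$ and $\boldsymbol{S}_{n}=n^{1/2}(\partial H_{n,\alpha}(\boldsymbol{\theta})/\partial\boldsymbol{\theta})_{\boldsymbol{\theta}=\boldsymbol{\theta}_{\boldsymbol{g},\alpha}}$, multiplication by $n^{1/2}$ turns the two linearized equations into the saddle-point linear system
$$
\begin{pmatrix} \boldsymbol{\Psi}_{n} & \boldsymbol{M} \\ \boldsymbol{M}^{T} & \boldsymbol{0}_{r\times r} \end{pmatrix}
\begin{pmatrix} n^{1/2}(\widetilde{\boldsymbol{\theta}}_{\alpha}-\boldsymbol{\theta}_{\boldsymbol{g},\alpha}) \\ n^{1/2}\boldsymbol{\lambda}_{n} \end{pmatrix}
= \begin{pmatrix} -\boldsymbol{S}_{n} \\ \boldsymbol{0}_{r} \end{pmatrix} + o_{p}(1).
$$

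The final step is linear algebra: I would solve this system by eliminating the multiplier. Because $\boldsymbol{\Psi}_{n}$ is positive definite and $\mathrm{rank}(\boldsymbol{M})=r$, the matrix $\boldsymbol{M}^{T}\boldsymbol{\Psi}_{n}^{-1}\boldsymbol{M}$ is invertible, so the top-left block of the inverse of the saddle-point matrix equals $\boldsymbol{\Psi}_{n}^{-1}-\boldsymbol{\Psi}_{n}^{-1}\boldsymbol{M}[\boldsymbol{M}^{T}\boldsymbol{\Psi}_{n}^{-1}\boldsymbol{M}]^{-1}\boldsymbol{M}^{T}\boldsymbol{\Psi}_{n}^{-1}$. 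Applying it to $-\boldsymbol{S}_{n}$ yields
$$
n^{1/2}(\widetilde{\boldsymbol{\theta}}_{\alpha}-\boldsymbol{\theta}_{\boldsymbol{g},\alpha}) = \left(\boldsymbol{\Psi}_{n}^{-1}\boldsymbol{M}\left[\boldsymbol{M}^{T}\boldsymbol{\Psi}_{n}^{-1}\boldsymbol{M}\right]^{-1}\boldsymbol{M}^{T}\boldsymbol{\Psi}_{n}^{-1}-\boldsymbol{\Psi}_{n}^{-1}\right)\boldsymbol{S}_{n}+o_{p}(1).
$$
Reading off $\boldsymbol{Q}_{\alpha}(\boldsymbol{\theta}_{\boldsymbol{g},\alpha})=\boldsymbol{\Psi}_{n}^{-1}\boldsymbol{M}[\boldsymbol{M}^{T}\boldsymbol{\Psi}_{n}^{-1}\boldsymbol{M}]^{-1}$ from (\ref{Q}), the parenthesis is exactly $\boldsymbol{Q}_{\alpha}(\boldsymbol{\theta}_{\boldsymbol{g},\alpha})\boldsymbol{M}^{T}\boldsymbol{\Psi}_{n}^{-1}-\boldsymbol{\Psi}_{n}^{-1}=\boldsymbol{P}^{\ast}(\boldsymbol{\theta}_{\boldsymbol{g},\alpha})$ of (\ref{P}), which is the claimed representation.

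The step I expect to be the genuine obstacle is not the algebra but the justification of the linearization: establishing the $\sqrt{n}$-consistency of $\widetilde{\boldsymbol{\theta}}_{\alpha}$ and showing that, after scaling by $n^{1/2}$, the second- and third-order Taylor remainders are $o_{p}(1)$ uniformly enough to be discarded. This is precisely where the integrable third-derivative bound C5 and the Ces\`{a}ro uniform-integrability conditions C6 are indispensable, since for i.n.i.d.o. data the ordinary law of large numbers must be replaced by its Ces\`{a}ro and Lindeberg--Feller counterparts to control the triangular array of score and Hessian contributions.
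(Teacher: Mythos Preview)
Your proposal is correct and follows essentially the same route as the paper's proof: Lagrange/KKT conditions for the constrained minimizer, a first-order Taylor expansion of both the score and the constraint about $\boldsymbol{\theta}_{\boldsymbol{g},\alpha}$, replacement of the empirical Hessian by $\boldsymbol{\Psi}_{n}(\boldsymbol{\theta}_{\boldsymbol{g},\alpha})$, and inversion of the resulting $(p+r)\times(p+r)$ saddle-point block system (the paper's system differs from yours only by an overall sign in the first block row). If anything, your discussion of the $\sqrt{n}$-consistency of $\widetilde{\boldsymbol{\theta}}_{\alpha}$ and the control of the Taylor remainders is more explicit than the paper's, which simply posits $\widetilde{\boldsymbol{\theta}}_{\alpha}=\boldsymbol{\theta}_{\boldsymbol{g},\alpha}+\boldsymbol{t}\,n^{-1/2}$ with bounded $\|\boldsymbol{t}\|$ and proceeds directly.
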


\begin{proof}
The RMRPE estimator of $\boldsymbol{\theta }$, $\widetilde{\boldsymbol{\theta }}_{\alpha }$, must satisfy
\begin{equation}
\left\{
\begin{array}{r}
\left( \tfrac{\partial H_{n, \alpha }(\boldsymbol{\theta })}{\partial \boldsymbol{\theta }}\right) _{\boldsymbol{\theta =}\widetilde{\boldsymbol{\theta }}_{\alpha }}+\boldsymbol{M}(\widetilde{\boldsymbol{\theta }}_{\alpha })\boldsymbol{\lambda }_{n}=\boldsymbol{0}_{p}, \\
\boldsymbol{m}(\widetilde{\boldsymbol{\theta }}_{\alpha })=\boldsymbol{0}_{r},
\end{array}
\right\} \Leftrightarrow \left\{
\begin{array}{r}
\left( \tfrac{\partial H_{n, \alpha }(\boldsymbol{\theta })}{\partial \boldsymbol{\theta }}\right) _{\boldsymbol{\theta =}\widetilde{\boldsymbol{\theta }}_{\alpha }} = -\boldsymbol{M}(\widetilde{\boldsymbol{\theta }}_{\alpha })\boldsymbol{\lambda }_{n} \\
\boldsymbol{m}(\widetilde{\boldsymbol{\theta }}_{\alpha })=\boldsymbol{0}_{r}
\end{array}
\right. , \label{EQ:lagrange_restrictions}
\end{equation}
where $\boldsymbol{\lambda }_{n}$ is a vector of Lagrangian multipliers. Now, applying Eq. (\ref{dist-asymp}), we can write $\widetilde{\boldsymbol{\theta }}_{\alpha }=\boldsymbol{\theta }_{\boldsymbol{g}, \alpha }+\boldsymbol{t}n^{-1/2}$, where $||\boldsymbol{t}||<c$, for some $0<c<\infty $. We have, applying Taylor,

\begin{equation*}  \label{MLE}
	\begin{aligned}
		\left( \frac{\partial H_{n, \alpha }(\boldsymbol{\theta })}{\partial \boldsymbol{\theta }}\right) _{\boldsymbol{\theta }=\widetilde{\boldsymbol{\theta }}_{\alpha }} = &
		\left( \frac{\partial H_{n, \alpha }(\boldsymbol{\theta })}{\partial \boldsymbol{\theta }}\right) _{\boldsymbol{\theta =\theta }_{\boldsymbol{g}, \alpha }}+\left( \frac{\partial^{2}H_{n, \alpha }(\boldsymbol{\theta })}{\partial \boldsymbol{\theta }\partial \boldsymbol{\theta }^{T}}\right) _{\boldsymbol{\theta =\theta }_{\boldsymbol{g}, \alpha }}(\widetilde{\boldsymbol{\theta }}_{\alpha }-\boldsymbol{\theta }_{\boldsymbol{g}, \alpha })\\
		&+o(||\widetilde{\boldsymbol{\theta }}_{\alpha }-\boldsymbol{\theta }_{\boldsymbol{g}, \alpha }||^{2}) ,
	\end{aligned}
\end{equation*}
and hence

\begin{equation*} \label{[A]}
\begin{aligned}
n^{1/2}\left( \frac{\partial H_{n, \alpha }(\boldsymbol{\theta })}{\partial \boldsymbol{\theta }}\right) _{\boldsymbol{\theta }= \widetilde{\boldsymbol{\theta }}_{\alpha }} =&
n^{1/2}\left( \frac{\partial H_{n, \alpha }(\boldsymbol{\theta })}{\partial \boldsymbol{\theta }}\right)_{\boldsymbol{\theta =\theta }_{\boldsymbol{g}, \alpha }}\\
&+\left( \frac{\partial ^{2}H_{n, \alpha }(\boldsymbol{\theta })}{\partial \boldsymbol{\theta }\partial \boldsymbol{\theta }^{T}}\right) _{\boldsymbol{\theta =\theta }_{\boldsymbol{g}, \alpha }}n^{1/2}(\widetilde{\boldsymbol{\theta }}_{\alpha }-\boldsymbol{\theta }_{\boldsymbol{g}, \alpha })+o(n^{1/2}||\widetilde{\boldsymbol{\theta }}_{\alpha }-\boldsymbol{\theta }_{\boldsymbol{g}, \alpha }||^{2}).
\end{aligned}
\end{equation*}

However,
\begin{equation*}
o(n^{1/2}||\widetilde{\boldsymbol{\theta }}_{\alpha }-\boldsymbol{\theta }_{\boldsymbol{g}, \alpha }||^{2})=o(n^{1/2}||\boldsymbol{t}||^{2}/n)=o(n^{-1/2}||\boldsymbol{t}||^{2})=o(O_{p}(1))=o_{p}(1).
\end{equation*}

Now,
\begin{equation*}
\begin{aligned}
	& \left( \frac{\partial ^{2}H_{n, \alpha }(\boldsymbol{\theta })}{\partial \theta _{j}\partial \theta _{k}}\right) _{\boldsymbol{\theta =\theta }_{\boldsymbol{g}, \alpha }} =
	 \frac{1}{n}\tsum\limits_{i=1}^{n}\left( \frac{\partial^{2}\hat{V}_{i}(Y_{i};\boldsymbol{\theta })}{\partial \theta _{j}\partial \theta_{k}}\right) _{\boldsymbol{\theta =\theta }_{\boldsymbol{g}, \alpha }}\\
	& \overset{P}{\underset{}{\longrightarrow }} \frac{1}{n}\tsum\limits_{i=1}^{n}E_{Y_i} \left[ \left( \frac{\partial ^{2}\hat{V}_{i}(Y;\boldsymbol{\theta })}{\partial \theta _{j}\partial \theta _{k}}\right) _{\boldsymbol{\theta
			=\theta }_{\boldsymbol{g}, \alpha }}\right] =\left( \boldsymbol{\Psi }_{n}\left( \boldsymbol{\theta }_{\boldsymbol{g}, \alpha }\right) \right) _{jk}.
\end{aligned}
\end{equation*}

Therefore,

\begin{equation}
n^{1/2}\left( \frac{\partial H_{n, \alpha }(\boldsymbol{\theta })}{\partial \boldsymbol{\theta }}\right) _{\boldsymbol{\theta }=\widetilde{\boldsymbol{\theta }}_{\alpha }} =
n^{1/2}\left( \frac{\partial H_{n, \alpha }(\boldsymbol{\theta })}{\partial \boldsymbol{\theta }}\right) _{\boldsymbol{\theta =\theta }_{\boldsymbol{g}, \alpha }}+\boldsymbol{\Psi }_{n}\left( \boldsymbol{\theta }_{\boldsymbol{g}, \alpha }\right) n^{1/2}(\widetilde{\boldsymbol{\theta }}_{\alpha }-\boldsymbol{\theta }_{\boldsymbol{g}, \alpha })+o_{p}(1).  \label{EQ:Theorem2_B}
\end{equation}

As the RMRPE $\widetilde{\boldsymbol{\theta }}_{\alpha }$ must satisfy the conditions in (\ref{EQ:lagrange_restrictions}), and in view of (\ref{EQ:Theorem2_B}) we have

\begin{equation*}
n^{1/2}\left( \frac{\partial H_{n, \alpha }(\boldsymbol{\theta })}{\partial \boldsymbol{\theta }}\right) _{\boldsymbol{\theta =\theta }_{\boldsymbol{g}, \alpha }} =
-\boldsymbol{\Psi }_{n}\left( \boldsymbol{\theta }_{\boldsymbol{g}, \alpha }\right) n^{1/2}(\widetilde{\boldsymbol{\theta }}_{\alpha }-\boldsymbol{\theta }_{\boldsymbol{g}, \alpha }) - \boldsymbol{M}(\widetilde{\boldsymbol{\theta }}_{\alpha })n^{1/2}\boldsymbol{\lambda }_{n}+o_{p}(1).
\end{equation*}

And applying the continuity of $\boldsymbol{M}$, this can be written as

\begin{equation}
-\boldsymbol{\Psi }_{n}\left( \boldsymbol{\theta }_{\boldsymbol{g}, \alpha }\right) n^{1/2}(\widetilde{\boldsymbol{\theta }}_{\alpha }-\boldsymbol{\theta }_{\boldsymbol{g}, \alpha }) - \boldsymbol{M}(\boldsymbol{\theta }_{\boldsymbol{g}, \alpha })n^{1/2}\boldsymbol{\lambda }_{n} =
n^{1/2}\left( \frac{\partial H_{n, \alpha }(\boldsymbol{\theta })}{\partial \boldsymbol{\theta }}\right) _{\boldsymbol{\theta =\theta }_{\boldsymbol{g}, \alpha }} + o_p(1).  \label{EQ:Theorem2_D}
\end{equation}

On the other hand, applying Taylor to $\boldsymbol{m},$ we obtain

\begin{equation}
n^{1/2}\boldsymbol{m}(\widetilde{\boldsymbol{\theta }}_{\alpha }) = n^{1/2}\boldsymbol{m}(\boldsymbol{\theta }_{\boldsymbol{g}, \alpha }) + \boldsymbol{M}(\boldsymbol{\theta }_{\boldsymbol{g}, \alpha })^{T}n^{1/2}(\widetilde{\boldsymbol{\theta }}_{\alpha }-\boldsymbol{\theta }_{\boldsymbol{g}, \alpha })+o_{p}(1).  \label{EQ:Theorem2_C}
\end{equation}

From (\ref{EQ:Theorem2_C}) and applying that $\boldsymbol{m}(\widetilde{\boldsymbol{\theta }}_{\alpha })=\boldsymbol{0}_r, \boldsymbol{m}(\boldsymbol{\theta }_{\boldsymbol{g}, \alpha })= \boldsymbol{0}_r,$ it follows that

\begin{equation}
\boldsymbol{M}(\boldsymbol{\theta }_{\boldsymbol{g}, \alpha })^{T}n^{1/2}(\widetilde{\boldsymbol{\theta }}_{\alpha }-\boldsymbol{\theta }_{\boldsymbol{g}, \alpha })+o_{p}(1)=\boldsymbol{0}_{r}.  \label{EQ:Theorem2_E}
\end{equation}

Now, we can express equations (\ref{EQ:Theorem2_D}) and (\ref{EQ:Theorem2_E}) in matrix form as

\begin{equation*}
\left(
\begin{array}{cc}
-\boldsymbol{\Psi }_{n}\left( \boldsymbol{\theta }_{\boldsymbol{g}, \alpha }\right) & -\boldsymbol{M}(\boldsymbol{\theta }_{\boldsymbol{g}, \alpha }) \\
\boldsymbol{M}(\boldsymbol{\theta }_{\boldsymbol{g}, \alpha })^{T} & \boldsymbol{0}_{r\times r}
\end{array}
\right) \left(
\begin{array}{c} n^{1/2}(\widetilde{\boldsymbol{\theta }}_{\alpha }-\boldsymbol{\theta }_{\boldsymbol{g}, \alpha }) \\ n^{1/2}\boldsymbol{\lambda }_{n} \end{array}
\right) =\left(
\begin{array}{c} n^{1/2}\left( \frac{\partial H_{n}(\boldsymbol{\theta })}{\partial \boldsymbol{\theta }}\right) _{\boldsymbol{\theta =\theta }_{\boldsymbol{g}, \alpha }} \\ \boldsymbol{0}_{r}\end{array}
\right) +o_{p}(1).
\end{equation*}

Therefore,

\begin{equation*}
\left(
\begin{array}{c} n^{1/2}(\widetilde{\boldsymbol{\theta }}_{\alpha }-\boldsymbol{\theta }_{\boldsymbol{g}, \alpha }) \\ n^{1/2}\boldsymbol{\lambda }_{n} \end{array}
\right) =\left(
\begin{array}{cc}
-\boldsymbol{\Psi }_{n}\left( \boldsymbol{\theta }_{\boldsymbol{g}, \alpha }\right) & -\boldsymbol{M}(\boldsymbol{\theta }_{\boldsymbol{g}, \alpha }) \\
\boldsymbol{M}(\boldsymbol{\theta }_{\boldsymbol{g}, \alpha })^{T} & \boldsymbol{0}_{r\times r}
\end{array}
\right) ^{-1}\left(
\begin{array}{c} n^{1/2}\left( \frac{\partial H_{n}(\boldsymbol{\theta })}{\partial \boldsymbol{\theta }}\right) _{\boldsymbol{\theta =\theta }_{\boldsymbol{g}, \alpha }} \\ \boldsymbol{0}_{r} \end{array}
\right) +o_{p}(1).
\end{equation*}

But

\begin{equation*}
\left(
\begin{array}{cc}
-\boldsymbol{\Psi }_{n}\left( \boldsymbol{\theta }_{\boldsymbol{g}, \alpha }\right)  & -\boldsymbol{M}(\boldsymbol{\theta }_{\boldsymbol{g}, \alpha }) \\
\boldsymbol{M}(\boldsymbol{\theta }_{\boldsymbol{g}, \alpha })^{T} & \boldsymbol{0}
\end{array}
\right) ^{-1}={\left(
\begin{array}{cc}
\boldsymbol{P}_{\alpha }^{\ast }(\boldsymbol{\theta }_{\boldsymbol{g, \alpha }}) & -\boldsymbol{Q}_{\alpha }(\boldsymbol{\theta }_{\boldsymbol{g}, \alpha }) \\
- \boldsymbol{Q}_{\alpha }(\boldsymbol{\theta }_{\boldsymbol{g}, \alpha })^{T} & \boldsymbol{R}_{\alpha }(\boldsymbol{\theta }_{\boldsymbol{g}, \alpha })
\end{array}
\right) },
\end{equation*}
where $\boldsymbol{P}_{\alpha }^{\ast }(\boldsymbol{\theta }_{\boldsymbol{g}, \alpha })$ and $\boldsymbol{Q}_{\alpha }(\boldsymbol{\theta }_{\boldsymbol{g}, \alpha })$ are given in (\ref{P}) and (\ref{Q}), respectively. The matrix $\boldsymbol{R}_{\alpha }(\boldsymbol{\theta }_{\boldsymbol{g}, \alpha })$ is the matrix needed to make the right hand side of the above equation equal to the indicated inverse. Then,

\begin{equation}
n^{1/2}(\widetilde{\boldsymbol{\theta }}_{\alpha }-\boldsymbol{\theta }_{\boldsymbol{g}, \alpha }) =
\boldsymbol{P}^{\ast }(\boldsymbol{\theta }_{\boldsymbol{g}, \alpha })n^{1/2}\left( \frac{\partial H_{n}(\boldsymbol{\theta })}{\partial \boldsymbol{\theta }}\right) _{\boldsymbol{\theta =\theta }_{\boldsymbol{g}, \alpha }}+o_{p}(1)  \label{E}
\end{equation}
and the result holds.
\end{proof}

In the following lemma we establish a property about matrix $\boldsymbol{P}_{\alpha }^{\ast }(\boldsymbol{\theta }_{\boldsymbol{g}, \alpha })$ that will be required for the next theorem.

\begin{lemma}
Given $\boldsymbol{P}_{\alpha }^{\ast }(\boldsymbol{\theta }_{\boldsymbol{g}, \alpha })$ and $\boldsymbol{\Psi }_{n}\left( \boldsymbol{\theta }_{\boldsymbol{g}, \alpha }\right)$, it follows

$$ \boldsymbol{P}_{\alpha }^{\ast }(\boldsymbol{\theta }_{\boldsymbol{g}, \alpha }) \boldsymbol{\Psi }_{n}\left( \boldsymbol{\theta }_{\boldsymbol{g}, \alpha }\right) \boldsymbol{P}_{\alpha }^{\ast }(\boldsymbol{\theta }_{\boldsymbol{g}, \alpha }) = -\boldsymbol{P}_{\alpha }^{\ast }(\boldsymbol{\theta }_{\boldsymbol{g}, \alpha }).$$
\end{lemma}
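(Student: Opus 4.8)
The plan is to reduce the identity to the idempotency of a single projection-type matrix and then verify it by a short, direct matrix computation. Throughout I suppress the common argument $\boldsymbol{\theta}_{\boldsymbol{g},\alpha}$ and write $\boldsymbol{\Psi}$ for $\boldsymbol{\Psi}_{n}(\boldsymbol{\theta}_{\boldsymbol{g},\alpha})$ and $\boldsymbol{M}$ for $\boldsymbol{M}(\boldsymbol{\theta}_{\boldsymbol{g},\alpha})$. Recall that $\boldsymbol{\Psi}$ is symmetric and positive definite, being the average of the positive definite matrices $\boldsymbol{J}_{\alpha}^{(i)}$ under condition \textbf{C4}; hence $\boldsymbol{\Psi}^{-1}$ exists and is symmetric, and since $\mathrm{rank}(\boldsymbol{M})=r$ the $r\times r$ matrix $\boldsymbol{M}^{T}\boldsymbol{\Psi}^{-1}\boldsymbol{M}$ is invertible. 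These facts are exactly what make $\boldsymbol{Q}_{\alpha}$ and $\boldsymbol{P}_{\alpha}^{\ast}$ well defined, and I would note them at the outset.

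First I would substitute the expression (\ref{Q}) for $\boldsymbol{Q}_{\alpha}$ into the definition (\ref{P}) of $\boldsymbol{P}_{\alpha}^{\ast}$ to obtain the single closed form
\[
\boldsymbol{P}_{\alpha}^{\ast} = \boldsymbol{A} - \boldsymbol{\Psi}^{-1},
\qquad
\boldsymbol{A} := \boldsymbol{\Psi}^{-1}\boldsymbol{M}\left(\boldsymbol{M}^{T}\boldsymbol{\Psi}^{-1}\boldsymbol{M}\right)^{-1}\boldsymbol{M}^{T}\boldsymbol{\Psi}^{-1}.
\]
The matrix $\boldsymbol{A}$ is the oblique projector onto the column space of $\boldsymbol{\Psi}^{-1}\boldsymbol{M}$ in the inner product induced by $\boldsymbol{\Psi}$, and the crucial property I would establish is that it is idempotent relative to $\boldsymbol{\Psi}$, namely $\boldsymbol{A}\boldsymbol{\Psi}\boldsymbol{A}=\boldsymbol{A}$. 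This follows from a single cancellation: inside the product the central factor $\boldsymbol{M}^{T}\boldsymbol{\Psi}^{-1}\boldsymbol{\Psi}\boldsymbol{\Psi}^{-1}\boldsymbol{M}$ collapses to $\boldsymbol{M}^{T}\boldsymbol{\Psi}^{-1}\boldsymbol{M}$, which then cancels one copy of $\left(\boldsymbol{M}^{T}\boldsymbol{\Psi}^{-1}\boldsymbol{M}\right)^{-1}$, leaving $\boldsymbol{A}$ itself.

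With that identity in hand the statement becomes a short expansion. I would write $\boldsymbol{P}_{\alpha}^{\ast}\boldsymbol{\Psi}\boldsymbol{P}_{\alpha}^{\ast}=(\boldsymbol{A}-\boldsymbol{\Psi}^{-1})\boldsymbol{\Psi}(\boldsymbol{A}-\boldsymbol{\Psi}^{-1})$ and simplify from the inside out, using $\boldsymbol{\Psi}\boldsymbol{\Psi}^{-1}=\boldsymbol{\Psi}^{-1}\boldsymbol{\Psi}=\boldsymbol{I}$. Expanding the product yields $\boldsymbol{A}\boldsymbol{\Psi}\boldsymbol{A}-\boldsymbol{A}-\boldsymbol{A}+\boldsymbol{\Psi}^{-1}$, and substituting the idempotency $\boldsymbol{A}\boldsymbol{\Psi}\boldsymbol{A}=\boldsymbol{A}$ collapses this to $-\boldsymbol{A}+\boldsymbol{\Psi}^{-1}=-(\boldsymbol{A}-\boldsymbol{\Psi}^{-1})=-\boldsymbol{P}_{\alpha}^{\ast}$, which is precisely the claim.

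There is no deep obstacle here; the only thing to watch is the bookkeeping in the two places where $\boldsymbol{\Psi}$ meets $\boldsymbol{\Psi}^{-1}$, and ensuring the two inverses $\boldsymbol{\Psi}^{-1}$ and $\left(\boldsymbol{M}^{T}\boldsymbol{\Psi}^{-1}\boldsymbol{M}\right)^{-1}$ are legitimately available, which is guaranteed by \textbf{C4} together with the rank condition on $\boldsymbol{M}$. The relative idempotency $\boldsymbol{A}\boldsymbol{\Psi}\boldsymbol{A}=\boldsymbol{A}$ is the single load-bearing step, so I would state and verify it as a displayed intermediate identity before carrying out the final expansion.
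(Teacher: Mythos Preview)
Your proof is correct and follows essentially the same route as the paper: both expand $\boldsymbol{P}_{\alpha}^{\ast}\boldsymbol{\Psi}\boldsymbol{P}_{\alpha}^{\ast}$ directly from the definition $\boldsymbol{P}_{\alpha}^{\ast}=\boldsymbol{\Psi}^{-1}\boldsymbol{M}(\boldsymbol{M}^{T}\boldsymbol{\Psi}^{-1}\boldsymbol{M})^{-1}\boldsymbol{M}^{T}\boldsymbol{\Psi}^{-1}-\boldsymbol{\Psi}^{-1}$ and rely on the same central cancellation $\boldsymbol{M}^{T}\boldsymbol{\Psi}^{-1}\boldsymbol{\Psi}\,\boldsymbol{\Psi}^{-1}\boldsymbol{M}=\boldsymbol{M}^{T}\boldsymbol{\Psi}^{-1}\boldsymbol{M}$. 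Your isolation of the relative idempotency $\boldsymbol{A}\boldsymbol{\Psi}\boldsymbol{A}=\boldsymbol{A}$ as a displayed intermediate step is a clean organizational choice, but the underlying computation is identical to the paper's.
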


\begin{proof}
Applying the definitions and denoting

$$ \boldsymbol{A}^{-1} (\boldsymbol{\theta }_{\boldsymbol{g}, \alpha }) = \left[ \boldsymbol{M}(\boldsymbol{\theta }_{\boldsymbol{g}, \alpha })^{T} \boldsymbol{\Psi }_{n}\left( \boldsymbol{\theta }_{\boldsymbol{g}, \alpha }\right) ^{-1} \boldsymbol{M}(\boldsymbol{\theta }_{\boldsymbol{g}, \alpha })\right]^{-1},$$
we obtain

\begin{align*}
\boldsymbol{P}_{\alpha }^{\ast }&(\boldsymbol{\theta }_{\boldsymbol{g}, \alpha }) \boldsymbol{\Psi }_{n}\left( \boldsymbol{\theta }_{\boldsymbol{g}, \alpha }\right) \boldsymbol{P}_{\alpha }^{\ast }(\boldsymbol{\theta }_{\boldsymbol{g}, \alpha })\\
 = & \left[ \boldsymbol{\Psi }_{n}\left( \boldsymbol{\theta }_{\boldsymbol{g}, \alpha }\right) ^{-1} \boldsymbol{M}(\boldsymbol{\theta }_{\boldsymbol{g}, \alpha }) \boldsymbol{A}^{-1}(\boldsymbol{\theta }_{\boldsymbol{g}, \alpha }) \boldsymbol{M}(\boldsymbol{\theta }_{\boldsymbol{g}, \alpha })^{T}\boldsymbol{\Psi }_{n}\left(
\boldsymbol{\theta }_{\boldsymbol{g}, \alpha }\right) ^{-1} - \boldsymbol{\Psi }_{n}\left( \boldsymbol{\theta }_{\boldsymbol{g}, \alpha }\right) ^{-1} \right] \\
 & \boldsymbol{\Psi }_{n}\left( \boldsymbol{\theta }_{\boldsymbol{g}, \alpha }\right) \boldsymbol{P}_{\alpha }^{\ast }(\boldsymbol{\theta }_{\boldsymbol{g}, \alpha }) \\
 = & \left[ \boldsymbol{\Psi }_{n}\left( \boldsymbol{\theta }_{\boldsymbol{g}, \alpha }\right) ^{-1} \boldsymbol{M}(\boldsymbol{\theta }_{\boldsymbol{g}, \alpha }) \boldsymbol{A}^{-1}(\boldsymbol{\theta }_{\boldsymbol{g}, \alpha }) \boldsymbol{M}(\boldsymbol{\theta }_{\boldsymbol{g}, \alpha })^T - Id \right] \boldsymbol{P}_{\alpha }^{\ast }(\boldsymbol{\theta }_{\boldsymbol{g}, \alpha }) \\
 =
 & \boldsymbol{\Psi }_{n}\left( \boldsymbol{\theta }_{\boldsymbol{g}, \alpha }\right) ^{-1} \boldsymbol{M}(\boldsymbol{\theta }_{\boldsymbol{g}, \alpha }) \boldsymbol{A}^{-1}\left( \boldsymbol{\theta }_{\boldsymbol{g}, \alpha }\right) \boldsymbol{M}(\boldsymbol{\theta }_{\boldsymbol{g}, \alpha })^T \boldsymbol{\Psi }_{n}\left( \boldsymbol{\theta }_{\boldsymbol{g}, \alpha }\right) ^{-1} \boldsymbol{M}(\boldsymbol{\theta }_{\boldsymbol{g}, \alpha }) \boldsymbol{A}^{-1}(\boldsymbol{\theta }_{\boldsymbol{g}, \alpha }) \boldsymbol{\Psi }_{n}\left( \boldsymbol{\theta }_{\boldsymbol{g}, \alpha }\right) ^{-1} \\
& - \boldsymbol{\Psi }_{n}\left( \boldsymbol{\theta }_{\boldsymbol{g}, \alpha }\right) ^{-1} \boldsymbol{M}(\boldsymbol{\theta }_{\boldsymbol{g}, \alpha }) \boldsymbol{A}^{-1}(\boldsymbol{\theta }_{\boldsymbol{g}, \alpha }) \boldsymbol{M}(\boldsymbol{\theta }_{\boldsymbol{g}, \alpha })^T \boldsymbol{\Psi }_{n}\left( \boldsymbol{\theta }_{\boldsymbol{g}, \alpha }\right) ^{-1} - \boldsymbol{P}_{\alpha }^{\ast }(\boldsymbol{\theta }_{\boldsymbol{g}, \alpha })
 \\
 = & - \boldsymbol{P}_{\alpha }^{\ast }(\boldsymbol{\theta }_{\boldsymbol{g}, \alpha }).
\end{align*}

Hence, the result holds.
\end{proof}

Suppose now that we have chosen a model as the best fitting model and we wonder if this model overfits the data and a restricted model is more accurate. Then, we can pose this problem as a model selection problem with two models, the big one and a restricted model, and apply the results of the previous section. Hence, it suffices to compute $RP_{NH}((M^{(s)}_{1},..., M^{(s)}_{n}, \boldsymbol{\theta })$ for both models and select the one attaining the minimum. Assuming the restricted model is correct, in the following theorem we shall establish the asymptotic distribution of
\begin{equation*}
2 n\left[ RP_{NH}\left( M^{(s)}_{1},..., M^{(s)}_{n}, \widehat{\boldsymbol{\theta }}_{\alpha } \right) - RP_{NH}\left( M^{(s)}_{1},..., M^{(s)}_{n}, \widetilde{\boldsymbol{\theta }}_{\alpha }\right) \right] ,
\end{equation*}
where $RP_{NH}\left( (M^{(s)}_{1},..., M^{(s)}_{n}, \widehat{\boldsymbol{\theta }}_{\alpha }\right) $ was given in (\ref{CRP}) and
\begin{equation*}
RP_{NH}\left( (M^{(s)}_{1},..., M^{(s)}_{n}, \widetilde{\boldsymbol{\theta }}_{\alpha }\right) =  H_{n,\alpha }\left( \widetilde{\boldsymbol{\theta }}_{\alpha }\right) + \frac{1}{n}trace\left( \boldsymbol{\Omega }
_{n}^{R}\left( \widetilde{\boldsymbol{\theta }}_{\alpha }\right) \boldsymbol{\Psi }_{n}^{R}\left( \widetilde{\boldsymbol{\theta }}_{\alpha }\right) ^{-1}\right) ,
\end{equation*}%
being $\boldsymbol{\Psi }_{n}^{R}\left( \widetilde{\boldsymbol{\theta }}_{\alpha }\right) $ and $\boldsymbol{\Omega }_{n}^{R}\left( \widetilde{\boldsymbol{\theta }}_{\alpha }\right) $ the matrices defined in (\ref{2.4.2}) and (\ref{2.4.3}) but for the restricted model.

Note that the probability of selecting the restricted model is
\begin{equation*}
\Pr \left( RP_{NH}\left( M_{1}^{(k)},...,M_{n}^{(k)},\widehat{\boldsymbol{\theta }}_{\alpha }\right) - RP_{NH}\left( M_{1}^{(k)},...,M_{n}^{(k)}, \widetilde{\boldsymbol{\theta }}_{\alpha } \right)  >0\right) .
\end{equation*}

\begin{theorem}
Assume conditions {\bf C1-C8} hold and suppose that the fitting parameter, $\boldsymbol{\theta }_{\boldsymbol{g}, \alpha },$ belongs to the restricted model. Then, the asymptotic distribution of

\begin{equation*}
2 n\left( RP_{NH}\left( (M^{(s)}_{1},..., M^{(s)}_{n}, \widehat{\boldsymbol{\theta }}_{\alpha } \right) -RP_{NH}\left( (M^{(s)}_{1},..., M^{(s)}_{n}, \widetilde{\boldsymbol{\theta }}_{\alpha }\right) \right)
\end{equation*}
coincides with the distribution of the random variable

\begin{equation*}
\sum_{j=1}^{r}\lambda _{j}(\boldsymbol{\theta }_{g,\alpha })\left( \boldsymbol{\theta }\right) Z_{j}^{2} + 2 trace(\boldsymbol{\Omega }_{n}\left( \boldsymbol{\theta }_{\boldsymbol{g}, \alpha }\right) \boldsymbol{\Psi }_{n}^{-1}\left( \boldsymbol{\theta }_{\boldsymbol{g}, \alpha }\right) ) - 2 trace( \boldsymbol{\Omega }_{n}^{R}\left( \boldsymbol{\theta }_{\boldsymbol{g}, \alpha }\right)(\boldsymbol{\Psi }_{n}^R)^{-1}\left( \boldsymbol{\theta }_{\boldsymbol{g}, \alpha }\right)  ) ,
\end{equation*}
where $Z_{1},\ldots ,Z_{k}$ are independent standard normal variables, $\lambda _{1}(\boldsymbol{\theta }_{\boldsymbol{g}, \alpha }),\ldots ,\lambda _{r}(\boldsymbol{\theta }_{\boldsymbol{g}, \alpha })$ are the nonzero eigenvalues of $- \boldsymbol{Q}_{\alpha }(\boldsymbol{\theta }_{\boldsymbol{g}, \alpha })\boldsymbol{M}(\boldsymbol{\theta }_{\boldsymbol{g}, \alpha })^{T}\boldsymbol{\Psi }_{n}\left( \boldsymbol{\theta }_{\boldsymbol{g}, \alpha }\right) ^{-1}\boldsymbol{\Omega }_{n}\left( \boldsymbol{\theta }_{\boldsymbol{g}, \alpha }\right) $ and

\begin{equation*}
r=rank\left( \boldsymbol{\Omega }_{n}\left( \boldsymbol{\theta }_{\boldsymbol{g}, \alpha }\right) \boldsymbol{Q}_{\alpha }(\boldsymbol{\theta }_{\boldsymbol{g}, \alpha })\boldsymbol{M}(\boldsymbol{\theta }_{\boldsymbol{g}, \alpha })^{T}\boldsymbol{\Psi }_{n}\left( \boldsymbol{\theta }_{\boldsymbol{g}, \alpha }\right)^{-1}\boldsymbol{\Omega }_{n}\left( \boldsymbol{\theta }_{\boldsymbol{g}, \alpha }\right) \right) .
\end{equation*}
\end{theorem}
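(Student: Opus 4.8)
The plan is to split the scaled criterion difference into a quadratic piece coming from $H_{n,\alpha}$ and two trace corrections, and to treat them separately. Writing out both $RP_{NH}$ values through (\ref{CRP}), the quantity in question equals
\[
2n\big[H_{n,\alpha}(\widehat{\boldsymbol{\theta}}_{\alpha})-H_{n,\alpha}(\widetilde{\boldsymbol{\theta}}_{\alpha})\big]+2\,trace\big(\boldsymbol{\Omega}_{n}(\widehat{\boldsymbol{\theta}}_{\alpha})\boldsymbol{\Psi}_{n}^{-1}(\widehat{\boldsymbol{\theta}}_{\alpha})\big)-2\,trace\big(\boldsymbol{\Omega}_{n}^{R}(\widetilde{\boldsymbol{\theta}}_{\alpha})(\boldsymbol{\Psi}_{n}^{R})^{-1}(\widetilde{\boldsymbol{\theta}}_{\alpha})\big).
\]
For the two trace terms I would invoke the consistency of $\widehat{\boldsymbol{\theta}}_{\alpha}$ and of $\widetilde{\boldsymbol{\theta}}_{\alpha}$ for $\boldsymbol{\theta}_{\boldsymbol{g},\alpha}$ (the latter uses the hypothesis that the fitting parameter lies in the restricted model) together with the continuity assumed in {\bf C8}; by the continuous mapping theorem these converge in probability to the same traces evaluated at $\boldsymbol{\theta}_{\boldsymbol{g},\alpha}$, which are exactly the two deterministic shift terms appearing in the statement. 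In all that follows the matrices $\boldsymbol{\Psi}_{n},\boldsymbol{\Omega}_{n},\boldsymbol{M},\boldsymbol{Q}_{\alpha},\boldsymbol{P}_{\alpha}^{\ast}$ are understood to be evaluated at $\boldsymbol{\theta}_{\boldsymbol{g},\alpha}$.

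The core is the first bracket. Since $\widehat{\boldsymbol{\theta}}_{\alpha}$ annuls the gradient of $H_{n,\alpha}$, a second-order Taylor expansion of $H_{n,\alpha}(\widetilde{\boldsymbol{\theta}}_{\alpha})$ about $\widehat{\boldsymbol{\theta}}_{\alpha}$, together with the fact that the Hessian of $H_{n,\alpha}$ converges in probability to $\boldsymbol{\Psi}_{n}$ (the convergence already established in the derivation of (\ref{E})), gives
\[
2n\big[H_{n,\alpha}(\widehat{\boldsymbol{\theta}}_{\alpha})-H_{n,\alpha}(\widetilde{\boldsymbol{\theta}}_{\alpha})\big]=-\big[\sqrt{n}(\widetilde{\boldsymbol{\theta}}_{\alpha}-\widehat{\boldsymbol{\theta}}_{\alpha})\big]^{T}\boldsymbol{\Psi}_{n}\big[\sqrt{n}(\widetilde{\boldsymbol{\theta}}_{\alpha}-\widehat{\boldsymbol{\theta}}_{\alpha})\big]+o_{p}(1).
\]
I then need a linear representation of $\sqrt{n}(\widetilde{\boldsymbol{\theta}}_{\alpha}-\widehat{\boldsymbol{\theta}}_{\alpha})$. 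The restricted part is (\ref{E}), namely $\sqrt{n}(\widetilde{\boldsymbol{\theta}}_{\alpha}-\boldsymbol{\theta}_{\boldsymbol{g},\alpha})=\boldsymbol{P}_{\alpha}^{\ast}\boldsymbol{S}_{n}+o_{p}(1)$, while the analogous expansion of the unrestricted estimating equation yields $\sqrt{n}(\widehat{\boldsymbol{\theta}}_{\alpha}-\boldsymbol{\theta}_{\boldsymbol{g},\alpha})=-\boldsymbol{\Psi}_{n}^{-1}\boldsymbol{S}_{n}+o_{p}(1)$, where $\boldsymbol{S}_{n}:=\sqrt{n}\big(\partial H_{n,\alpha}/\partial\boldsymbol{\theta}\big)_{\boldsymbol{\theta}=\boldsymbol{\theta}_{\boldsymbol{g},\alpha}}$. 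Subtracting and using (\ref{P}),
\[
\sqrt{n}(\widetilde{\boldsymbol{\theta}}_{\alpha}-\widehat{\boldsymbol{\theta}}_{\alpha})=\big(\boldsymbol{P}_{\alpha}^{\ast}+\boldsymbol{\Psi}_{n}^{-1}\big)\boldsymbol{S}_{n}+o_{p}(1)=\boldsymbol{T}\boldsymbol{S}_{n}+o_{p}(1),\qquad \boldsymbol{T}:=\boldsymbol{Q}_{\alpha}\boldsymbol{M}^{T}\boldsymbol{\Psi}_{n}^{-1}.
\]

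The decisive step, which I expect to be the main obstacle, is an algebraic simplification of the resulting quadratic form. The preceding lemma gives $\boldsymbol{P}_{\alpha}^{\ast}\boldsymbol{\Psi}_{n}\boldsymbol{P}_{\alpha}^{\ast}=-\boldsymbol{P}_{\alpha}^{\ast}$; expanding $\boldsymbol{T}\boldsymbol{\Psi}_{n}\boldsymbol{T}=(\boldsymbol{P}_{\alpha}^{\ast}+\boldsymbol{\Psi}_{n}^{-1})\boldsymbol{\Psi}_{n}(\boldsymbol{P}_{\alpha}^{\ast}+\boldsymbol{\Psi}_{n}^{-1})$ and applying this identity collapses it to $\boldsymbol{T}\boldsymbol{\Psi}_{n}\boldsymbol{T}=\boldsymbol{T}$, and from (\ref{Q}) one checks directly that $\boldsymbol{T}$ is symmetric. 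Substituting the representation into the quadratic form therefore yields
\[
2n\big[H_{n,\alpha}(\widehat{\boldsymbol{\theta}}_{\alpha})-H_{n,\alpha}(\widetilde{\boldsymbol{\theta}}_{\alpha})\big]=-\boldsymbol{S}_{n}^{T}\boldsymbol{T}\boldsymbol{S}_{n}+o_{p}(1).
\]
The delicate points here are verifying $\boldsymbol{T}\boldsymbol{\Psi}_{n}\boldsymbol{T}=\boldsymbol{T}$ and keeping the sign straight, so that the matrix governing the limiting eigenvalues ends up being $-\boldsymbol{T}\boldsymbol{\Omega}_{n}$ rather than $+\boldsymbol{T}\boldsymbol{\Omega}_{n}$.

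Finally, since $\boldsymbol{\theta}_{\boldsymbol{g},\alpha}$ minimizes $\tfrac1n\sum_{i}V_{i,\alpha}$, the vector $\boldsymbol{S}_{n}$ has asymptotic mean zero and covariance $\boldsymbol{\Omega}_{n}$, so the Lindeberg-type condition {\bf C7} gives $\boldsymbol{S}_{n}\overset{\mathcal{L}}{\rightarrow}N(\boldsymbol{0}_{p},\boldsymbol{\Omega}_{n})$. Applying Corollary 2.1 of \cite{digu85} to the quadratic form $-\boldsymbol{S}_{n}^{T}\boldsymbol{T}\boldsymbol{S}_{n}$ shows that it converges in law to $\sum_{j=1}^{r}\lambda_{j}Z_{j}^{2}$ with $Z_{1},\dots,Z_{r}$ independent standard normal, where the $\lambda_{j}$ are the nonzero eigenvalues of $-\boldsymbol{T}\boldsymbol{\Omega}_{n}=-\boldsymbol{Q}_{\alpha}\boldsymbol{M}^{T}\boldsymbol{\Psi}_{n}^{-1}\boldsymbol{\Omega}_{n}$ (equivalently of $-\boldsymbol{\Omega}_{n}^{1/2}\boldsymbol{T}\boldsymbol{\Omega}_{n}^{1/2}$), and their number is $r=rank(\boldsymbol{\Omega}_{n}\boldsymbol{Q}_{\alpha}\boldsymbol{M}^{T}\boldsymbol{\Psi}_{n}^{-1}\boldsymbol{\Omega}_{n})$, the last equality holding because $\boldsymbol{\Omega}_{n}$ is nonsingular. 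Combining this weak limit with the two in-probability limits of the trace terms through Slutsky's theorem produces exactly the stated limiting distribution.
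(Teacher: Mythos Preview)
Your proposal is correct and follows essentially the same strategy as the paper: split off the two trace corrections via consistency and {\bf C8}, reduce $2n[H_{n,\alpha}(\widehat{\boldsymbol{\theta}}_{\alpha})-H_{n,\alpha}(\widetilde{\boldsymbol{\theta}}_{\alpha})]$ to the quadratic form $-\boldsymbol{S}_n^{T}\boldsymbol{Q}_{\alpha}\boldsymbol{M}^{T}\boldsymbol{\Psi}_n^{-1}\boldsymbol{S}_n$, and invoke Corollary~2.1 of \cite{digu85}. The only organisational difference is that the paper expands $H_{n,\alpha}(\widetilde{\boldsymbol{\theta}}_{\alpha})$ and $H_{n,\alpha}(\widehat{\boldsymbol{\theta}}_{\alpha})$ separately around $\boldsymbol{\theta}_{\boldsymbol{g},\alpha}$ and then subtracts (using the lemma $\boldsymbol{P}_{\alpha}^{\ast}\boldsymbol{\Psi}_n\boldsymbol{P}_{\alpha}^{\ast}=-\boldsymbol{P}_{\alpha}^{\ast}$ on the restricted piece), whereas you expand $H_{n,\alpha}(\widetilde{\boldsymbol{\theta}}_{\alpha})$ directly around $\widehat{\boldsymbol{\theta}}_{\alpha}$ and use the same lemma in the equivalent form $\boldsymbol{T}\boldsymbol{\Psi}_n\boldsymbol{T}=\boldsymbol{T}$; both routes land on the identical limiting quadratic form.
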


\begin{proof}
Let us denote

\begin{equation*}
L=2n\left[ RP_{NH}\left( M^{(s)}_{1},..., M^{(s)}_{n}, \widehat{\boldsymbol{\theta }}_{\alpha } \right) -RP_{NH}\left( M^{(s)}_{1},..., M^{(s)}_{n}, \widetilde{\boldsymbol{\theta }}_{\alpha }\right) \right] .
\end{equation*}

Then,

\begin{equation*}
L=2n\left[ H_{n,\alpha }\left( \widehat{\boldsymbol{\theta }}_{\alpha }\right)  - H_{n,\alpha }\left( \widetilde{\boldsymbol{\theta }}_{\alpha }\right) \right] + 2 trace \left[ \boldsymbol{\Omega }_{n}\left( \widehat{\boldsymbol{\theta }}_{\alpha }\right) \boldsymbol{\Psi }_{n}\left( \widehat{\boldsymbol{\theta }}_{\alpha }\right)^{-1} \right] -2 trace\left[ \boldsymbol{\Omega }_{n}^R\left( \widetilde{\boldsymbol{\theta }}_{\alpha }\right) \boldsymbol{\Psi }_{n}^R\left( \widetilde{\boldsymbol{\theta }}_{\alpha }\right)^{-1}\right] .
\end{equation*}

First, note that

\begin{eqnarray*}
H_{n}\left( \widetilde{\boldsymbol{\theta }}_{\alpha }\right)
& = & H_{n, \alpha }\left( \boldsymbol{\theta }_{\boldsymbol{g}, \alpha }\right) + \left( {\partial H_{n, \alpha }\left( \boldsymbol{\theta }\right) \over \partial \boldsymbol{\theta }}\right)_{\boldsymbol{\theta } = \boldsymbol{\theta }_{\boldsymbol{g}, \alpha }} \left( \widetilde{\boldsymbol{\theta }}_{\alpha } - \boldsymbol{\theta }_{\boldsymbol{g}, \alpha }\right) \\
& & + {1\over 2} \left( \widetilde{\boldsymbol{\theta }}_{\alpha } - \boldsymbol{\theta }_{\boldsymbol{g}, \alpha }\right)^T \left( {\partial^2 H_{n, \alpha }\left( \boldsymbol{\theta }\right) \over \partial \boldsymbol{\theta } \partial \boldsymbol{\theta }^T}\right)_{\boldsymbol{\theta } = \boldsymbol{\theta }_{\boldsymbol{g}, \alpha }} \left( \widetilde{\boldsymbol{\theta }}_{\alpha } - \boldsymbol{\theta }_{\boldsymbol{g}, \alpha }\right) + o(||\widetilde{\boldsymbol{\theta }}_{\alpha } - \boldsymbol{\theta }_{\boldsymbol{g}, \alpha }||^2).
\end{eqnarray*}

Hence,

\begin{eqnarray*}
2n \left[ H_{n}\left( \widetilde{\boldsymbol{\theta }}_{\alpha }\right) - H_{n, \alpha }\left( \boldsymbol{\theta }_{\boldsymbol{g}, \alpha }\right) \right]
& = & 2 \sqrt{n} \left( {\partial H_{n, \alpha }\left( \boldsymbol{\theta }\right) \over \partial \boldsymbol{\theta }}\right)_{\boldsymbol{\theta } = \boldsymbol{\theta }_{\boldsymbol{g}, \alpha }} \sqrt{n} \left( \widetilde{\boldsymbol{\theta }}_{\alpha } - \boldsymbol{\theta }_{\boldsymbol{g}, \alpha }\right) \\
& & + \sqrt{n} \left( \widetilde{\boldsymbol{\theta }}_{\alpha } - \boldsymbol{\theta }_{\boldsymbol{g}, \alpha }\right)^T \left( {\partial^2 H_{n, \alpha }\left( \boldsymbol{\theta }\right) \over \partial \boldsymbol{\theta } \partial \boldsymbol{\theta }^T}\right)_{\boldsymbol{\theta } = \boldsymbol{\theta }_{\boldsymbol{g}, \alpha }} \sqrt{n} \left( \widetilde{\boldsymbol{\theta }}_{\alpha } - \boldsymbol{\theta }_{\boldsymbol{g}, \alpha }\right) + o_p(1).
\end{eqnarray*}

Now, taking into account that

$$ \sqrt{n} \left( \widetilde{\boldsymbol{\theta }}_{\alpha } - \boldsymbol{\theta }_{\boldsymbol{g}, \alpha }\right) = \boldsymbol{P}_{\alpha }^{\ast }(\boldsymbol{\theta }_{\boldsymbol{g}, \alpha }) \sqrt{n}\left( {\partial H_{n, \alpha }\left( \boldsymbol{\theta }\right) \over \partial \boldsymbol{\theta }}\right)_{\boldsymbol{\theta } = \boldsymbol{\theta }_{\boldsymbol{g}, \alpha }} + o_p(1),$$
and

$$ \left( {\partial^2 H_{n, \alpha }\left( \boldsymbol{\theta }\right) \over \partial \boldsymbol{\theta } \partial \boldsymbol{\theta }^T}\right)_{\boldsymbol{\theta } = \boldsymbol{\theta }_{\boldsymbol{g}, \alpha }} \rightarrow \boldsymbol{\Psi }_{n}\left( \boldsymbol{\theta }_{\boldsymbol{g}, \alpha }\right) ,$$
by Eq. (\ref{Ecuacion}), we conclude that

\begin{equation*}
	\begin{aligned}
2 n& \left[ H_{n}\left( \widetilde{\boldsymbol{\theta }}_{\alpha }\right) - H_{n, \alpha }\left( \boldsymbol{\theta }_{\boldsymbol{g}, \alpha }\right) \right]\\
= & 2 \sqrt{n} \left( {\partial H_{n, \alpha }\left( \boldsymbol{\theta }\right) \over \partial \boldsymbol{\theta }}\right)_{\boldsymbol{\theta } = \boldsymbol{\theta }_{\boldsymbol{g}, \alpha }}^T \boldsymbol{P}_{\alpha }^{\ast }(\boldsymbol{\theta }_{\boldsymbol{g}, \alpha }) \sqrt{n}\left( {\partial H_{n, \alpha }\left( \boldsymbol{\theta }\right) \over \partial \boldsymbol{\theta }}\right)_{\boldsymbol{\theta } = \boldsymbol{\theta }_{\boldsymbol{g}, \alpha }} \\
 & + \sqrt{n} \left( {\partial H_{n, \alpha }\left( \boldsymbol{\theta }\right) \over \partial \boldsymbol{\theta }}\right)_{\boldsymbol{\theta } = \boldsymbol{\theta }_{\boldsymbol{g}, \alpha }}^T \boldsymbol{P}_{\alpha }^{\ast }(\boldsymbol{\theta }_{\boldsymbol{g}, \alpha }) \boldsymbol{\Psi }_{n}\left( \boldsymbol{\theta }_{\boldsymbol{g}, \alpha }\right) \boldsymbol{P}_{\alpha }^{\ast }(\boldsymbol{\theta }_{\boldsymbol{g}, \alpha }) \sqrt{n}\left( {\partial H_{n, \alpha }\left( \boldsymbol{\theta }\right) \over \partial \boldsymbol{\theta }}\right)_{\boldsymbol{\theta } = \boldsymbol{\theta }_{\boldsymbol{g}, \alpha }} + o_p(1).
\end{aligned}
\end{equation*}

Now, applying the previous lemma, we know that
$$ \boldsymbol{P}_{\alpha }^{\ast }(\boldsymbol{\theta }_{\boldsymbol{g}, \alpha }) \boldsymbol{\Psi }_{n}\left( \boldsymbol{\theta }_{\boldsymbol{g}, \alpha }\right) \boldsymbol{P}_{\alpha }^{\ast }(\boldsymbol{\theta }_{\boldsymbol{g}, \alpha }) = -\boldsymbol{P}_{\alpha }^{\ast }(\boldsymbol{\theta }_{\boldsymbol{g}, \alpha }),$$
and thus,

$$ 2n \left[ H_{n}\left( \widetilde{\boldsymbol{\theta }}_{\alpha }\right) - H_{n, \alpha }\left( \boldsymbol{\theta }_{\boldsymbol{g}, \alpha }\right) \right] = \sqrt{n} \left( {\partial H_{n, \alpha }\left( \boldsymbol{\theta }\right) \over \partial \boldsymbol{\theta }}\right)_{\boldsymbol{\theta } = \boldsymbol{\theta }_{\boldsymbol{g}, \alpha }}^T \boldsymbol{P}_{\alpha }^{\ast }(\boldsymbol{\theta }_{\boldsymbol{g}, \alpha }) \sqrt{n}\left( {\partial H_{n, \alpha }\left( \boldsymbol{\theta }\right) \over \partial \boldsymbol{\theta }}\right)_{\boldsymbol{\theta } = \boldsymbol{\theta }_{\boldsymbol{g}, \alpha }} + o_p(1).$$

On the other hand,
\begin{eqnarray*}
H_{n, \alpha }\left( \boldsymbol{\theta }_{\boldsymbol{g}, \alpha }\right)
& = & H_{n}\left( \widehat{\boldsymbol{\theta }}_{\alpha }\right) + \left( {\partial H_{n, \alpha }\left( \boldsymbol{\theta }\right) \over \partial \boldsymbol{\theta }}\right)_{\boldsymbol{\theta } = \widehat{\boldsymbol{\theta }}_{\alpha }} \left( \boldsymbol{\theta }_{\boldsymbol{g}, \alpha } - \widehat{\boldsymbol{\theta }}_{\alpha } \right) \\
& & + {1\over 2} \left( \boldsymbol{\theta }_{\boldsymbol{g}, \alpha }- \widehat{\boldsymbol{\theta }}_{\alpha }\right)^T \left( {\partial^2 H_{n, \alpha }\left( \boldsymbol{\theta }\right) \over \partial \boldsymbol{\theta } \partial \boldsymbol{\theta }^T}\right)_{\boldsymbol{\theta } = \widehat{\boldsymbol{\theta }}_{\alpha }} \left( \boldsymbol{\theta }_{\boldsymbol{g}, \alpha }-\widehat{\boldsymbol{\theta }}_{\alpha } \right) + o(||\boldsymbol{\theta }_{\boldsymbol{g}, \alpha }-\widehat{\boldsymbol{\theta }}_{\alpha }||^2).
\end{eqnarray*}
Now, taking into account that
$$ \left( {\partial^2 H_{n, \alpha }\left( \boldsymbol{\theta }\right) \over \partial \boldsymbol{\theta } \partial \boldsymbol{\theta }^T}\right)_{\boldsymbol{\theta } = \widehat{\boldsymbol{\theta }}_{\alpha }}  \longrightarrow \left( {\partial^2 H_{n, \alpha }\left( \boldsymbol{\theta }\right) \over \partial \boldsymbol{\theta } \partial \boldsymbol{\theta }^T}\right)_{\boldsymbol{\theta } = \boldsymbol{\theta }_{\boldsymbol{g}, \alpha }} \longrightarrow \boldsymbol{\Psi }_{n}\left( \boldsymbol{\theta }_{\boldsymbol{g}, \alpha }\right) ,$$
and
$$ \left( {\partial H_{n, \alpha }\left( \boldsymbol{\theta }\right) \over \partial \boldsymbol{\theta }}\right)_{\boldsymbol{\theta } = \widehat{\boldsymbol{\theta }}_{\alpha }} =0,$$
we conclude that

$$ 2n \left[ H_{n}\left( \widehat{\boldsymbol{\theta }}_{\alpha }\right) - H_{n, \alpha }\left( \boldsymbol{\theta }_{\boldsymbol{g}, \alpha }\right) \right] = - \sqrt{n} \left( \widehat{\boldsymbol{\theta }}_{\alpha } -
\boldsymbol{\theta }_{\boldsymbol{g}, \alpha }\right)^T \boldsymbol{\Psi }_{n}\left( \boldsymbol{\theta }_{\boldsymbol{g}, \alpha }\right) \sqrt{n} \left( \widehat{\boldsymbol{\theta }}_{\alpha } -
\boldsymbol{\theta }_{\boldsymbol{g}, \alpha }\right) + o_p(1).$$

Applying $\left( \frac{\partial H_{n, \alpha }\left( \boldsymbol{\theta }\right) }{\partial \boldsymbol{\theta }}\right) _{\boldsymbol{\theta }=\widehat{\boldsymbol{\theta }}_{\alpha }} = \boldsymbol{0},$  we have by Taylor
$$ \boldsymbol{0} = n^{1/2} \left( \frac{\partial H_{n, \alpha }\left( \boldsymbol{\theta }\right) }{\partial \boldsymbol{\theta }^{T}}\right) _{\boldsymbol{\theta =\theta }_{\boldsymbol{g}, \alpha }} + \boldsymbol{\Psi }_{n}(\boldsymbol{\theta}_{\boldsymbol{g}, \alpha }) n^{1/2} \left( \widehat{\boldsymbol{\theta }}_{\alpha }-\boldsymbol{\theta }_{\boldsymbol{g}, \alpha }\right) +o_{p}(1), $$
so that
$$ n^{1/2} \left( \widehat{\boldsymbol{\theta }}_{\alpha }-\boldsymbol{\theta }_{\boldsymbol{g}, \alpha }\right) = - n^{1/2} \boldsymbol{\Psi }_{n}(\boldsymbol{\theta}_{\boldsymbol{g}, \alpha })^{-1} \left( \frac{\partial H_{n, \alpha }\left( \boldsymbol{\theta }\right) }{\partial \boldsymbol{\theta }^{T}}\right) _{\boldsymbol{\theta =\theta }_{\boldsymbol{g}, \alpha }} +o_{p}(1).$$
Hence,
$$ 2n \left[ H_{n}\left( \widehat{\boldsymbol{\theta }}_{\alpha }\right) - H_{n, \alpha }\left( \boldsymbol{\theta }_{\boldsymbol{g}, \alpha }\right) \right] = - \sqrt{n} \left( {\partial H_{n, \alpha }\left( \boldsymbol{\theta }\right) \over \partial \boldsymbol{\theta }}\right)_{\boldsymbol{\theta } = \boldsymbol{\theta }_{\boldsymbol{g}, \alpha }}^T \boldsymbol{\Psi }_{n}\left( \boldsymbol{\theta }_{\boldsymbol{g}, \alpha }\right)^{-1}  \sqrt{n}\left( {\partial H_{n, \alpha }\left( \boldsymbol{\theta }\right) \over \partial \boldsymbol{\theta }}\right)_{\boldsymbol{\theta } = \boldsymbol{\theta }_{\boldsymbol{g}, \alpha }} + o_p(1).$$

But as $\boldsymbol{P}^{\ast }(\boldsymbol{\theta }_{\boldsymbol{g}, \alpha })=\boldsymbol{Q}_{\alpha }(\boldsymbol{\theta }_{\boldsymbol{g}, \alpha })\boldsymbol{M}(\boldsymbol{\theta }_{\boldsymbol{g}, \alpha })^{T}\boldsymbol{\Psi }_{n}\left( \boldsymbol{\theta }_{\boldsymbol{g}, \alpha }\right) ^{-1} - \boldsymbol{\Psi }_{n}\left( \boldsymbol{\theta }_{\boldsymbol{g}, \alpha }\right)^{-1},$ we obtain

\begin{equation*}
	\begin{aligned}
2n\left[ H_{n, \alpha }\left( \widehat{\boldsymbol{\theta }}_{\alpha }\right) - H_{n, \alpha }\left( \widetilde{\boldsymbol{\theta }}_{\alpha }\right) \right]
 = &
- \sqrt{n}\left( \frac{\partial H_{n, \alpha }\left( \boldsymbol{\theta }\right) }{\partial \boldsymbol{\theta }^{T}}\right) _{\boldsymbol{\theta =\theta }_{\boldsymbol{g}, \alpha }}^T\boldsymbol{Q}_{\alpha }(\boldsymbol{\theta }_{\boldsymbol{g}, \alpha })\boldsymbol{M}(\boldsymbol{\theta }_{\boldsymbol{g}, \alpha })^{T}\boldsymbol{\Psi }_{n}\left( \boldsymbol{\theta }_{\boldsymbol{g}, \alpha }\right) ^{-1}\\
& \times \sqrt{n}\left( \frac{\partial H_{n, \alpha }(\boldsymbol{\theta })}{\partial \boldsymbol{\theta }}\right) _{\boldsymbol{\theta =\theta }_{\boldsymbol{g}, \alpha }}
+o_{p}(1).
	\end{aligned}
\end{equation*}

Finally we have,
\begin{equation*}
\sqrt{n}\left( \frac{\partial H_{n, \alpha }\left( \boldsymbol{\theta }\right) }{\partial \boldsymbol{\theta }^{T}}\right) _{\boldsymbol{\theta =\theta }_{\boldsymbol{g}, \alpha }}\underset{n\rightarrow \infty }{\overset{L}{\longrightarrow }}N(\boldsymbol{0}_{k},\boldsymbol{\Omega }_{n}\left( \boldsymbol{\theta }_{\boldsymbol{g}, \alpha }\right) ),
\end{equation*}
and thus the asymptotic distribution of $2n\left[ H_{n, \alpha }\left( \widehat{\boldsymbol{\theta }}_{\alpha }\right) - H_{n, \alpha }\left( \widetilde{\boldsymbol{\theta }}_{\alpha }\right) \right] $ coincides with the distribution of the random variable

\begin{equation*}
{\sum\limits_{i=1}^{r}}\lambda _{i}(\boldsymbol{\theta }_{\boldsymbol{g}, \alpha })Z_{i}^{2},
\end{equation*}
where $Z_{1},\ldots ,Z_{r}$ are independent standard normal variables, $\lambda _{1}(\boldsymbol{\theta }_{\boldsymbol{g}, \alpha }),\ldots ,\lambda _{r}(\boldsymbol{\theta }_{\boldsymbol{g}, \alpha })$ are the nonzero eigenvalues of $- \boldsymbol{Q}_{\alpha }(\boldsymbol{\theta }_{\boldsymbol{g}, \alpha })\boldsymbol{M}(\boldsymbol{\theta }_{\boldsymbol{g}, \alpha })^{T}\boldsymbol{\Psi }_{n}\left( \boldsymbol{\theta }_{\boldsymbol{g}, \alpha }\right) ^{-1}\boldsymbol{\Omega }_{n}\left( \boldsymbol{\theta }_{\boldsymbol{g}, \alpha }\right) $ and

\begin{equation*}
r = \mathrm{rank}\left( \boldsymbol{Q}_{\alpha }(\boldsymbol{\theta }_{\boldsymbol{g}, \alpha })\boldsymbol{M}(\boldsymbol{\theta }_{\boldsymbol{g}, \alpha })^{T}\boldsymbol{\Psi }_{n}\left( \boldsymbol{\theta }_{\boldsymbol{g}, \alpha }\right)^{-1}\boldsymbol{\Omega }_{n}\left( \boldsymbol{\theta }_{\boldsymbol{g}, \alpha }\right) \right) .
\end{equation*}

For more details see \ Corollary 2.1 in \cite{digu85}. This finishes the proof.
\end{proof}

The above result provides a way to asymptotically compute the probability of over-fitting, which is of great interest in model selection theory.

%\begin{remark}
%It is interesting to note that under the assumption of the previous theorem the fitting parameter, $\boldsymbol{\theta }_{\boldsymbol{g}, \alpha },$ belongs to the restricted model and therefore this model must be chosen because the full model overfits the data. If $\boldsymbol{\theta }_{\boldsymbol{g}, \alpha }$ does not belong to the restricted model we must choose the larger model.
%\end{remark}

%\begin{example}
	\subsection{Example: The RP-based model selection under the multiple linear regression model and restricted parameter spaces.}
We shall consider the MLRM as defined in Section \ref{sec:RPMLRM} and we are interested in comparing a full model with a restricted model under the restrictions
\begin{equation*}
\beta _{p-r+1}=...=\beta _{p}=0.
\end{equation*}

In this case the model parameter is $\boldsymbol{\theta} = \left( \beta _{0},...,\beta _{p},\sigma \right) $ and the function $\boldsymbol{m}(\boldsymbol{\theta })$ defining the restrictions is
\begin{equation*}
\boldsymbol{m}(\boldsymbol{\theta })=\boldsymbol{m}\left( \beta_{0},...,\beta _{p},\sigma \right) =(\beta _{p-r+1},...,\beta _{p}).
\end{equation*}

Consequently, its derivative is given by
\begin{equation*}
\boldsymbol{M}(\boldsymbol{\theta })=\frac{\partial \boldsymbol{m}(\boldsymbol{\theta })}{\partial \boldsymbol{\theta }}=\left(
\begin{array}{c} \boldsymbol{0}_{(p-r+1)\times r} \\ \boldsymbol{I}_{r\times r} \\ \boldsymbol{0}_{1\times r} \end{array}
\right) .
\end{equation*}

Let us expressed the design matrix $\mathbb{X}$ as
\begin{equation*}
\mathbb{X=}\left( \mathbb{X}_{1}\text{,}\mathbb{X}_{2}\right) ,
\end{equation*}
with $\mathbb{X}_{1}$ a $n\times (p-r+1)$ matrix and $\mathbb{X}_{2}$ a $n\times r$ matrix. It is clear that $\mathbb{X}_{1}$ is the design matrix for the restricted model and $\mathbb{X}_{2}$ corresponds to the design matrix for the full model whose parameters are not in the small model. The matrices $\boldsymbol{\Psi }_{n}\left( \boldsymbol{\beta },\sigma \right) $ and $\boldsymbol{\Omega }_{n}\left( \boldsymbol{\beta },\sigma \right) $ given in Eq. (\ref{EqsSR}) can be rewritten, using the notation $\mathbb{X}_{1}$ and $\mathbb{X}_{2},$ as

\begin{equation*}
\boldsymbol{\Psi }_{n}\left( \boldsymbol{\beta },\sigma \right) = K_{1}\left( \alpha +1\right) ^{-\frac{3}{2}}\left[
\begin{array}{ccc}
\frac{1}{n}\mathbb{X}_{1}^{T}\mathbb{X}_{1} & \frac{1}{n}\mathbb{X}_{1}^{T}\mathbb{X}_{2} & 0 \\
\frac{1}{n}\mathbb{X}_{2}^{T}\mathbb{X}_{1} & \frac{1}{n}\mathbb{X}_{2}^{T}\mathbb{X}_{2} & 0 \\
0 & 0 & \frac{2}{\alpha +1}
\end{array}
\right] ,
\end{equation*}
being $K_{1}$ as defined in (\ref{K1}) and

\begin{equation*}
\boldsymbol{\Omega }_{n}\left( \boldsymbol{\beta },\sigma \right) =K_{1}^{2}\sigma ^{2}\frac{1}{\left( 2\alpha +1\right) ^{3/2}}\left[
\begin{array}{ccc}
\frac{1}{n}\mathbb{X}_{1}^{T}\mathbb{X}_{1} & \frac{1}{n}\mathbb{X}_{1}^{T}\mathbb{X}_{2} & 0 \\
\frac{1}{n}\mathbb{X}_{2}^{T}\mathbb{X}_{1} & \frac{1}{n}\mathbb{X}_{2}^{T}\mathbb{X}_{2} & 0 \\
0 & 0 & \frac{(3\alpha ^{2}+4\alpha +2)}{(\alpha +1)^{2}(2\alpha +1)}
\end{array}
\right] .
\end{equation*}

Now, the inverse of the matrix $\boldsymbol{\Psi }_{n}\left( \boldsymbol{\beta },\sigma \right) $ is given by
\begin{equation*}
\boldsymbol{\Psi }_{n}^{-1}\left( \boldsymbol{\beta },\sigma \right) =K_{1}\left( \alpha +1\right) ^{3/2}\left[
\begin{array}{ccc} n\boldsymbol{A}_{11} & n\boldsymbol{A}_{12} & 0 \\ n\boldsymbol{A}_{21} & n\boldsymbol{A}_{22} & 0 \\ 0 & 0 & \frac{\alpha +1}{2}\end{array}
\right] ,
\end{equation*}
with
\begin{eqnarray*}
\boldsymbol{A}_{11} & = & \left( \mathbb{X}_{1}^{T}\mathbb{X}_{1}\right)^{-1}+\left( \mathbb{X}_{1}^{T}\mathbb{X}_{1}\right) ^{-1}\mathbb{X}_{1}^{T}\mathbb{X}_{2}\boldsymbol{D}^{-1}\mathbb{X}_{2}^{T}\mathbb{X}_{1}\left(
\mathbb{X}_{1}^{T}\mathbb{X}_{1}\right) ^{-1}, \\
\boldsymbol{A}_{12} & = & -\left( \mathbb{X}_{1}^{T}\mathbb{X}_{1}\right) ^{-1}\mathbb{X}_{1}^{T}\mathbb{X}_{2}\boldsymbol{D}^{-1}, \\
\boldsymbol{A}_{21} & = & -\boldsymbol{D}^{-1}\mathbb{X}_{2}^{T}\mathbb{X}_{1}\left( \mathbb{X}_{1}^{T}\mathbb{X}_{1}\right) ^{-1}, \\
\boldsymbol{A}_{22} & = &\boldsymbol{D}^{-1},
\end{eqnarray*}
being
\begin{equation*}
\boldsymbol{D=}\mathbb{X}_{2}^{T}\mathbb{X}_{2}-\mathbb{X}_{2}^{T}\mathbb{X}_{1}\left( \mathbb{X}_{1}^{T}\mathbb{X}_{1}\right) ^{-1}\mathbb{X}_{1}^{T}\mathbb{X}_{2}.
\end{equation*}

Therefore, we have that the matrix $\boldsymbol{\Psi }_{n}^{-1}\left( \boldsymbol{\beta },\sigma \right)$ can be computed as
\begin{eqnarray*}
\boldsymbol{\Psi }_{n}^{-1}\left( \boldsymbol{\beta },\sigma \right) \boldsymbol{M}(\boldsymbol{\beta },\sigma ) &=&K_{1}^{-1}\left( \alpha +1\right)^{3/2}\left[
\begin{array}{ccc} n\boldsymbol{A}_{11} & n\boldsymbol{A}_{12} & 0 \\ n\boldsymbol{A}_{21} & n\boldsymbol{A}_{22} & 0 \\ 0 & 0 & \frac{\alpha +1}{2} \end{array}
\right] \left(
\begin{array}{c} \boldsymbol{0}_{(p-r)\times r} \\ \boldsymbol{I}_{r\times r} \\ \boldsymbol{0}_{r} \end{array}
\right)  \\
& = & K_{1}^{-1}\left( \alpha +1\right) ^{3/2}n\left[
\begin{array}{c} -\left( \mathbb{X}_{1}^{T}\mathbb{X}_{1}\right) ^{-1}\mathbb{X}_{1}^{T}\mathbb{X}_{2}\boldsymbol{D}^{-1} \\ \boldsymbol{D}^{-1} \\ 0 \end{array}
\right] .
\end{eqnarray*}

On the other hand,
\begin{equation*}
\left( \boldsymbol{M}(\boldsymbol{\beta },\sigma )^{T}\boldsymbol{\Psi }_{n}^{-1}\left( \boldsymbol{\beta },\sigma \right) \boldsymbol{M}(\boldsymbol{\beta },\sigma )\right) ^{-1}=\frac{K_{1}\left( \alpha +1\right) ^{-3/2}}{n}\boldsymbol{D},
\end{equation*}
and
\begin{equation*}
\boldsymbol{M}(\boldsymbol{\beta },\sigma )^{T}\boldsymbol{\Psi }_{n}^{-1}\left( \boldsymbol{\beta },\sigma \right) \boldsymbol{\Omega }_{n}\left( \boldsymbol{\beta },\sigma \right) =
\left( \alpha +1\right) ^{3/2}\frac{K_{1}\sigma ^{2}}{(2\alpha +1)^{3/2}}\left( \boldsymbol{0,\boldsymbol{I}_{r\times r},0}\right) ,
\end{equation*}
and so, multiplying the above expressions we obtain that
\begin{equation*}
\boldsymbol{Q}_{\alpha }(\boldsymbol{\beta },\sigma )=\boldsymbol{\Psi }_{n}^{-1}\left( \boldsymbol{\beta },\sigma \right) \boldsymbol{M}(\boldsymbol{\beta },\sigma )\left[ \boldsymbol{M}(\boldsymbol{\beta },\sigma )^{T}\boldsymbol{\Psi }_{n}^{-1}(\boldsymbol{\beta },\sigma )\boldsymbol{M}(\boldsymbol{\beta },\sigma )\right]^{-1} =\left[
\begin{array}{c} -\left( \mathbb{X}_{1}^{T}\mathbb{X}_{1}\right) ^{-1}\mathbb{X}_{1}^{T} \mathbb{X}_{2} \\ \boldsymbol{I}_{r\times r} \\ 0 \end{array}
\right] ,
\end{equation*}
and
\begin{equation*}
\boldsymbol{Q}_{\alpha }(\boldsymbol{\beta },\sigma )\boldsymbol{M}(\boldsymbol{\beta },\sigma )^{T}\boldsymbol{\Psi }_{n}\left( \boldsymbol{\beta },\sigma \right) ^{-1}\boldsymbol{\Omega }_{n}\left( \boldsymbol{\beta },\sigma \right) = \left( \alpha +1\right) ^{3/2}\frac{K_{1}\sigma_{\boldsymbol{g}, \alpha } ^{2}}{(2\alpha +1)^{3/2}}\left(
\begin{array}{ccc}
\mathbf{0} & \left( \mathbb{X}_{1}^{T}\mathbb{X}_{1}\right) ^{-1}\mathbb{X}_{1}^{T}\mathbb{X}_{2} & \mathbf{0} \\ \mathbf{0} & \boldsymbol{I}_{r\times r} & \mathbf{0} \\ \mathbf{0} & \mathbf{0} & \mathbf{0}
\end{array}
\right) .
\end{equation*}

Consequently, in this case we can compute the $r-$first eigenvalues as
\begin{equation*}
\lambda _{1}(\boldsymbol{\theta }_{\boldsymbol{g}, \alpha })=\ldots =\lambda _{r}(\boldsymbol{\theta }_{\boldsymbol{g}, \alpha })=\left( \alpha +1\right) ^{3/2}\frac{K_{1}\sigma _{\boldsymbol{g}, \alpha }^{2}}{(2\alpha +1)^{3/2}},
\end{equation*}
and hence,
$$ \sum_{i=1}^r \lambda_i(\boldsymbol{\theta }_{\boldsymbol{g}, \alpha }) Z_i^2 = - \left( \alpha +1\right) ^{3/2}\frac{K_{1}\sigma _{\boldsymbol{g}, \alpha }^{2}}{(2\alpha +1)^{3/2}} \chi^2_r.$$

On the other hand, we have
$$ \boldsymbol{\Omega }_{n}\left( \widehat{\boldsymbol{\beta }}_{\alpha }, \widehat{\sigma }_{\alpha }\right) \boldsymbol{\Psi }_{n}^{-1}\left( \widehat{\boldsymbol{\beta }}_{\alpha }, \widehat{\sigma }_{\alpha }\right) =  K_1 \sigma_{\boldsymbol{g}, \alpha }^2 {(\alpha +1)^{3/2}\over (2\alpha +1)^{3/2}}
\left[ \begin{array}{ccc}
\boldsymbol{I} & \boldsymbol{0} &  \boldsymbol{0} \\ \boldsymbol{0} & \boldsymbol{I} & \boldsymbol{0} \\  \boldsymbol{0} &  \boldsymbol{0} & {(3\alpha^2 + 4\alpha +2) \over 2(2\alpha +1)(\alpha +1)}
\end{array} \right] ,$$
and hence the trace of the above matrix is given by
\begin{equation*}
trace\left( \boldsymbol{\Omega }_{n}\left( \widehat{\boldsymbol{\beta }}_{\alpha }, \widehat{\sigma }_{\alpha }\right) \boldsymbol{\Psi }_{n}^{-1}\left( \widehat{\boldsymbol{\beta }}_{\alpha }, \widehat{\sigma }_{\alpha }\right) \right) \rightarrow
\sigma _{\boldsymbol{g}, \alpha }^{2}K_{1} {(\alpha +1)^{3/2}\over (2\alpha +1)^{3/2}} \left( (p+1) +\frac{\left( 3\alpha ^{2}+4\alpha +2\right) }{2\left( 2\alpha +1\right) \left( \alpha +1\right) }\right) ,
\end{equation*}
and

\begin{equation*}
trace\left( \boldsymbol{\Omega }_{n}^R\left( \widetilde{\boldsymbol{\beta }}_{\alpha },\widetilde{\sigma }_{\alpha }\right) \right) (\boldsymbol{\Psi }_{n}^{R})^{-1}\left( \widetilde{\boldsymbol{\beta }}_{\alpha },\widetilde{\sigma }_{\alpha }\right) \rightarrow
\sigma _{\boldsymbol{g}, \alpha }^{2}K_{1}{(\alpha +1)^{3/2}\over (2\alpha +1)^{3/2}} \left( \left( p-r+1\right) +\frac{\left( 3\alpha ^{2}+4\alpha +2\right) }{2\left( 2\alpha +1\right) \left( \alpha +1\right) }\right) .
\end{equation*}

Therefore,
\begin{equation*}
trace\left( \boldsymbol{\Omega }_{n}\left( \widehat{\boldsymbol{\beta }}_{\alpha },\widehat{\sigma }_{\alpha }\right) \boldsymbol{\Psi }_{n}^{-1}\left( \widehat{\boldsymbol{\beta }}_{\alpha },\widehat{\sigma }_{\alpha }\right) \right) - trace\left( \boldsymbol{\Omega }_{n}^R\left( \widetilde{\boldsymbol{\beta }}_{\alpha }, \widetilde{\sigma }_{\alpha }\right) (\boldsymbol{\Psi }_{n}^{R})^{-1}\left( \widetilde{\boldsymbol{\beta }}_{\alpha },\widetilde{\sigma }_{\alpha }\right) \right) \rightarrow
 \sigma_{\boldsymbol{g}, \alpha }^{2}K_{1}{(\alpha +1)^{3/2}\over (2\alpha +1)^{3/2}} r.
\end{equation*}

Finally, the asymptotic probability of selecting the restricted model when this model is correct is

$$ \Pr \left( 2n\left( RP_{NH} (M^{(s)}_{1},..., M^{(s)}_{n}, \widehat{\boldsymbol{\theta }}_{\alpha }) -RP_{NH}(M^{(s)}_{1},..., M^{(s)}_{n}, \widetilde{\boldsymbol{\theta }}_{\alpha })\right) >0\right) \rightarrow $$
$$ \Pr \left( \left( -\alpha +1\right) ^{3/2}\frac{K_{1}\sigma _{\boldsymbol{g}, \alpha }^{2}}{(2\alpha +1)^{3/2}}\chi _{r}^{2}+ 2 \left( \alpha +1\right) ^{3/2}\frac{K_{1}\sigma _{\boldsymbol{g}, \alpha }^{2}}{(2\alpha +1)^{3/2}}r>0\right) = $$ $$=\Pr \left( \left( \alpha +1\right) ^{3/2}\frac{K_{1}\sigma _{\boldsymbol{g}, \alpha }^{2}}{(2\alpha +1)^{3/2}} \left( 2r- \chi _{r}^{2} \right) >0 \right) = \Pr \left( \chi _{r}^{2}<2 r \right) .$$

%Note that this result the same as the one appearing in \cite{kuha18} for the same problem but considering DPD instead of RP.
%\end{example}

\section{Simulation Study}

To evaluate the performance of the $RP_{NH}$-criterion introduced in this paper, we consider the situation of a polynomial regression model. We take the model

$$ Y_i= X_i+2X_i^2  -X_i^3 + X_i^4 + \epsilon_i, i=1, ..., n,$$
where $\epsilon_i \sim {\cal  N}(0,1)$ and the variables $X_i$ are fixed and chosen uniformly in the interval [-2, 2]. Next, we take $n=100$, so that

$$X_i = -2 + {4\over 102}(i+1), i=1, ..., 100.$$

We consider several theoretical models aiming to fit this data. These models are given by the degree of the polynomial defining the model. Note that the regression coefficients adopt the same expression as in MLRM, just taking $X^i$ as $X_i,$ and thus we can use the formulas developed in the previous sections. In our case, we have considered six different models, varying from constants (degree 0) to polynomials of degree 5. Thus defined, each model is characterized by the degree, denoted by $p$.

We take 1000 different sample data $(Y^s, X^s), s=1, ..., 1000$ and for each sample, we select the best fitting model according to several criteria. We have considered $AIC, BIC, AIC_c$ and the $RP_{NH}$-criterion for different values of the tuning parameter, namely $\alpha =0.01, 0.02, 0.04, 0.07, 0.1, 0.2, 0.4, 0.5, 0.7$ and 1.

In Table \ref{tabsim1} we have written the number of times that each model is selected for each model selection criterion. From these results, it can be seen that $BIC$ seems to be the best fitting selection criterion, the other model selection criteria having a similar performance.

\begin{table}[h]
\begin{center}
%\resizebox{\textwidth}{!}{
\begin{tabular}{|c|cccccc|}
\hline $p$   &  0 &   1 &   2 &   3 &    4 &   5 \\
\hline $AIC$ & 0 &   0 &  0 &   0 & 836  & 164  \\
$BIC$        & 0   & 0   & 0  &  0  & 967  & 33 \\
$AIC_c$      & 0   & 0   & 0  &  0  & 864  & 136 \\
$RPNH_{0.01}$& 0   & 0   & 0  &  0  & 822  & 178 \\
$RPNH_{0.02}$& 0   & 0   & 0  &  0  & 822  & 178 \\
$RPNH_{0.04}$& 0   & 0   & 0  &  0  & 826  & 174 \\
$RPNH_{0.1}$& 0   & 0   & 0  &  0  & 822  & 178 \\
$RPNH_{0.2}$& 0   & 0   & 0  &  0  & 834  & 166 \\
$RPNH_{0.4}$& 0   & 0   & 0  &  0  & 842  & 158 \\
$RPNH_{0.5}$& 0   & 0   & 0  &  0  & 841  & 159 \\
$RPNH_{0.7}$& 0   & 0   & 0  &  0  & 838  & 162 \\
$RPNH_{1.0}$& 0   & 0   & 0  &  0  & 837  & 163 \\
\hline
\end{tabular}
%}
\caption{Results for uncontaminated data.}
\label{tabsim1}
\end{center}
\end{table}

As it was explained throughout the paper, we expect $RP_{NH}$ to be a robust selection criterion. To check this hypothesis, we have considered a situation of contamination. Thus, we consider the previous model but we introduce contamination in some of the data. More concretely, we define

$$ \epsilon_i \sim {\cal U}(\min_i (X_i+2X_i^2  -X_i^3 + X_i^4) -r, \max_i (X_i+2X_i^2  -X_i^3 + X_i^4) + r),$$
for some of the data chosen at random. Here, $r$ is a constant measuring the strength of contamination, in the sense that the bigger $r,$ the strongest the contamination. We have considered three valus $r=1, 5, 10.$ Moreover, we have varied the proportion of data affected by contamination. In this study, we have chosen the proportion of contamination as $0.05, 0.10, 0.20, 0.30.$

Again, we have obtained the best fitting model according different model selection criteria, and we have conducted this experiment 1000 times. The number of times that each model is selected for each combination of contamination and strength of contamination $r$ is given in Tables \ref{tabsim2}, \ref{tabsim3}, \ref{tabsim4} and \ref{tabsim5}. The left part of each table corresponds to $r=1,$ the center part for $r=5$ and the right part for $r=10.$

\begin{table}[h]
\begin{center}
	
\resizebox{\textwidth}{!}{
\begin{tabular}{|c|cccccc|cccccc|cccccc|}
	\hline
		& \multicolumn{6}{c}{$r=1$} & \multicolumn{6}{c}{$r=5$} & \multicolumn{6}{c|}{$r=10$} \\
\hline $p$  &  0 &   1 &   2 &   3 &    4 &   5 & 0 &   1 &   2 &   3 &    4 &   5 & 0 &   1 &   2 &   3 &    4 &   5\\
\hline $AIC$ & 0  & 0   & 1  & 19  & 659  & 321 & 0 &   0 &  10 &  27 & 622  &341  & 0 &   0 &   9 &  38 & 616  & 337 \\
$BIC$        & 0  & 0   & 16 & 64  & 802  & 118 & 0 &   0 &  39 &  84 & 751  & 126 & 0 &   0 &  57 &  96 & 715  & 132 \\
$AIC_c$      & 0  & 0   & 1  & 22  & 694  & 283 & 0 &   0 &  12 &  33 & 651  & 304 & 0 &   0 &  12 &  43 & 634  & 311 \\
$RPNH_{0.01}$ & 0  &  0  &  3 &  14 & 844  &139  & 0  &  0  &  5 &  18 & 830  & 147 & 0  &  0  &  9 &  22 & 812  &157 \\
$RPNH_{0.02}$ &  0 &  0  &  0 &  13 & 866  & 121 & 0  &  0  &  2 &  47 & 833  & 118 & 0  &  0  &  6 &  62 & 810  &122 \\
$RPNH_{0.04}$ &  0 &   0 &   2&   20&  844 & 134 & 0  &  0  &  1 &  18 & 833  & 148 & 0  &  0  &  1 &  18 & 833  &148 \\
$RPNH_{0.1}$ & 0  & 0   & 0  &  0  & 835  & 165 & 0 &   0 &   0 &   0 & 836  &164  & 0 &   0 &   0 &   0 & 830  & 170\\
$RPNH_{0.2}$ & 0  & 0   & 0  &  0  & 829  & 171 & 0 &   0 &   0 &   0 & 833  & 167 & 0 &   0 &   0 &   0 & 833  & 167\\
$RPNH_{0.4}$ & 0  & 0   & 0  &  0  & 837  & 163 & 0 &   0 &   0 &   0 & 835  & 165 & 0 &   0 &   0 &   0 & 839  & 161 \\
$RPNH_{0.5}$ & 0  & 0   & 0  &  0  & 837  & 163 & 0 &   0 &   0 &   0 & 848  & 152 & 0 &   0 &   0 &   0 & 834  & 166\\
$RPNH_{0.7}$ & 0  & 0   & 0  &  0  & 842  & 158 & 0 &   0 &   0 &   0 & 836  & 164 & 0 &   0 &   0 &   0 & 831  & 169\\
$RPNH_{1.0}$ & 0  & 0   & 0  &  0  & 838  & 162 & 0 &   0 &   0 &   0 & 836  & 164 & 0 &   0 &   0 &   0 & 830  & 170\\
\hline
\end{tabular}
}
\caption{Results for contamination degree of $5\%$}
\label{tabsim2}
\end{center}
\end{table}

\begin{table}[h]
\begin{center}
\resizebox{\textwidth}{!}{
\begin{tabular}{|c|cccccc|cccccc|cccccc|}
	\hline
		& \multicolumn{6}{c}{$r=1$} & \multicolumn{6}{c}{$r=5$} & \multicolumn{6}{c|}{$r=10$} \\
\hline $p$  &  0 &   1 &   2 &   3 &    4 &   5 &  0 &   1 &   2 &   3 &    4 &   5 &  0 &   1 &   2 &   3 &    4 &   5\\
\hline $AIC$ & 0  & 0   & 17 & 64  & 591  & 328 & 0 &   0 &  18 &  82 & 575 & 325   & 0  &  0  & 47  & 106 & 538  & 309\\
$BIC$        & 0  & 0   & 94 & 155 & 629  & 122 & 0 &   0 &  95 & 180 & 611 & 114   & 0  &  0  &153  & 178 & 558  & 111\\
$AIC_c$      & 0  & 0   & 21 &  75 & 621  & 283 & 0 &   0 &  24 &  93 & 601 & 282   & 0  &  0  & 55  & 123 & 556  & 266\\
$RPNH_{0.01}$ &  0 &   0 &  24&   37&  770 & 169 & 0  &  0  & 26 &  48 & 750  & 176 & 0  &  0  & 37 &  61 & 705  &197 \\
$RPNH_{0.02}$ &  0 &   0 &  19&   30&  800 & 151 & 0  &  0  & 24 &  40 & 780  & 156 & 0  &  0  & 30 &  60 & 747  &163 \\
$RPNH_{0.04}$ &  0 &   0 &  16&   60&  809 & 115 & 0  &  0  & 19 & 100 & 764  & 117 & 0  &  0  & 23 &  94 & 770  &113 \\
$RPNH_{0.1}$ & 0  & 0   & 0  &  5  & 845  & 150 & 0 &   0 &   0 &   1 & 839 & 160   & 0  &  0  &  0  &  1  & 851  & 148\\
$RPNH_{0.2}$ & 0  & 0   & 0  &  0  & 829  & 171 & 0 &   0 &   0 &   0 & 835 & 165   & 0  &  0  &  0  &  0  & 844  & 156\\
$RPNH_{0.4}$ & 0  & 0   & 0  &  0  & 829  & 171 & 0 &   0 &   0 &   0 & 840 & 160   & 0  &  0  &  0  &  0  & 850  & 150 \\
$RPNH_{0.5}$ & 0  & 0   & 0  &  0  & 824  & 176 & 0 &   0 &   0 &   0 & 845 & 155   & 0  &  0  &  0  &  0  & 841  & 159 \\
$RPNH_{0.7}$ & 0  & 0   & 0  &  0  & 831  & 169 & 0 &   0 &   0 &   0 & 833 & 167   & 0  &  0  &  0  &  0  & 834  & 166 \\
$RPNH_{1.0}$ & 0  & 0   & 0  &  0  & 841  & 159 & 0 &   0 &   0 &   0 & 835 & 165   & 0  &  0  &  0  &  0  & 833  & 167\\
\hline
\end{tabular}
}
\caption{Results for a contamination degree of $10\%$.}
\label{tabsim3}
\end{center}
\end{table}

\begin{table}[h]
\begin{center}
	\resizebox{\textwidth}{!}{
\begin{tabular}{|c|cccccc|cccccc|cccccc|}
	\hline
		& \multicolumn{6}{c}{$r=1$} & \multicolumn{6}{c}{$r=5$} & \multicolumn{6}{c|}{$r=10$} \\
\hline $p$  &  0 &   1 &   2 &   3 &    4 &   5 &  0 &   1 &   2 &   3 &    4 &   5 &  0 &   1 &   2 &   3 &    4 &   5 \\
\hline $AIC$ & 0  & 0   & 52 & 134 & 509  & 305 & 0  &  0  & 76  & 176 & 464  & 284 &  0 &   0 & 148 & 169 & 397  & 286 \\
$BIC$        & 0  & 0   &238 & 210 & 440  & 112 & 0  &  0  &278  & 234 & 398  & 90  & 0  &  0  & 367 & 223 & 325  & 85\\
$AIC_c$      & 0  & 0   & 64 & 154 & 512  & 270 & 0  &  0  & 91  & 191 & 476  & 242 & 0  &  0  & 179 & 180 & 400  & 241\\
$RPNH_{0.01}$ & 0  &  0  & 41 &  92 & 596  & 271 & 0  &  0  & 43 &  93 & 561  & 303 & 0  &  0  & 52 &  95 & 514  & 339 \\
$RPNH_{0.02}$ & 0  &  0  & 37 &  85 & 625  & 253 & 0  &  0  & 39 &  92 & 589  & 280 & 0  &  0  & 47 &  90 & 561  & 302 \\
$RPNH_{0.04}$ & 0  &  0  & 29 &  75 & 676  & 220 & 0  &  0  & 32 &  88 & 661  & 219 & 0  &  0  & 43 &  93 & 648  & 216 \\
$RPNH_{0.1}$ & 0  & 0   & 42 & 214 & 631  & 113 & 0  &  0  & 41  & 138 & 693  & 128 & 0  &  0  & 20  & 64  & 810  & 106\\
$RPNH_{0.2}$ & 0  & 0   & 0  &  0  & 836  & 164 & 0  &  0  &  0  &  0  & 831  & 169 & 0  &  0  &  0  &  0  & 849  & 151\\
$RPNH_{0.4}$ & 0  & 0   & 0  &  0  & 837  & 163 & 0  &  0  &  0  &  0  & 833  & 167 & 0  &  0  &  0  &  0  & 840  & 160\\
$RPNH_{0.5}$ & 0  & 0   & 0  &  0  & 836  & 164 & 0  &  0  &  0  &  0  & 829  & 171 & 0  &  0  &  0  &  0  & 840  & 160\\
$RPNH_{0.7}$ & 0  & 0   & 0  &  0  & 845  & 155 & 0  &  0  &  0  &  0  & 827  & 173 & 0  &  0  &  0  &  0  & 849  & 151\\
$RPNH_{1.0}$ & 0  & 0   & 0  &  0  & 834  & 166 & 0  &  0  &  0  &  0  & 823  & 177 & 0  &  0  &  0  &  0  & 836  & 164\\
\hline
\end{tabular}
}
\caption{Results for a contamination degree of $20\% $.}
\label{tabsim4}
\end{center}
\end{table}

\begin{table}[h]
\begin{center}
	\resizebox{\textwidth}{!}{
\begin{tabular}{|c|cccccc|cccccc|cccccc|}
	\hline
		& \multicolumn{6}{c}{$r=1$} & \multicolumn{6}{c}{$r=5$} & \multicolumn{6}{c|}{$r=10$} \\
\hline $p$  &  0 &   1 &   2 &   3 &    4 &   5 &  0 &   1 &   2 &   3 &    4 &   5 &  0 &   1 &   2 &   3 &    4 &   5\\
\hline $AIC$ & 0  & 0   &112 & 178 & 433  &277  & 0  &  0  & 114 & 209 & 373  & 304 & 0  &  0  & 192 & 183 & 374  & 251\\
$BIC$        & 0  & 0   &327 & 256 & 327  & 90  & 0  &  0  & 385 & 240 & 276  & 99  & 0  &  0  & 457 & 212 & 253  & 78\\
$AIC_c$      & 0  & 0   &136 & 191 & 436  & 237 & 0  &  0  & 137 & 233 & 368  & 262 & 0  &  0  & 219 & 189 & 371  & 221\\
$RPNH_{0.01}$ & 0  &  0  & 51 &  79 & 519  &351  & 0  &  0  & 55 &  90 & 488  & 367 & 0  &  0  & 58 &  90 & 428  & 424 \\
$RPNH_{0.02}$ & 0  &  0  & 46 &  77 & 540  &337  & 0  &  0  & 48 &  84 & 520  &348  & 0  &  0  & 52 &  87 & 472  & 389 \\
$RPNH_{0.04}$ & 0  &  0  & 44 &  81 & 573  &302  & 0  &  0  & 46 &  80 & 555  & 319 & 0  &  0  & 53 &  78 & 533  & 336 \\
$RPNH_{0.1}$ & 0  & 0   & 70 & 187 & 550  & 193 & 0  &  0  & 63  & 221 & 537  & 179 & 0  &  0  & 55  & 139 & 628  & 178\\
$RPNH_{0.2}$ & 0  & 0   & 17 &  68 & 774  & 141 & 0  &  0  & 11  & 13  & 817  & 159 & 0  &  0  &  2  &  8  & 854  & 136\\
$RPNH_{0.4}$ & 0  & 0   & 0  &  0  & 856  & 144 & 0  &  0  &  0  &  0  & 833  & 167 & 0  &  0  &  0  &  0  & 841  & 159\\
$RPNH_{0.5}$ & 0  & 0   & 0  &  0  & 845  & 155 & 0  &  0  &  0  &  0  & 830  & 170 & 0  &  0  &  0  &  0  & 832  & 168\\
$RPNH_{0.7}$ & 0  & 0   & 0  &  0  & 834  & 166 & 0  &  0  &  0  &  0  & 815  & 185 & 0  &  0  &  0  &  0  & 826  & 174\\
$RPNH_{1.0}$ & 0  & 0   & 0  &  0  & 828  & 172 & 0  &  0  &  1  &  1  & 813  & 185 & 0  &  0  &  0  &  0  & 841  & 159\\
\hline
\end{tabular}
}
\caption{Results for a contamination degree of $30\%$.}
\label{tabsim5}
\end{center}
\end{table}

From the results in these tables, it can be seen that the performance of $AIC, BIC$ and $AIC_c$ dramatically decrease, in the sense that the proportion of times obtaining the true degree $p=4$ decreases if contamination is present. As expected, the bigger the rate of contaminated data, the poorer the performance. Note however that they are not very affected for different values of $r$.

On the other hand, the results are quite similar to the uncontaminated case for $RP_{NH}$ and big values of the tuning parameter. This was the expected result and it follows the same behavior as other situations where RP has been considered. The best behavior appears for $\alpha =0.4$ and $\alpha =0.5,$ where the efficiency is good and the performance in terms of robustness is very good.

Finally, in order to test if this is the usual behavior of these methods, we have repeated this study for different values of the polynomial regression, each coefficient varying in $\{ -2, -1, 0, 1, 2\} .$ This leads to 3125 different models for each value of $r= 1, 5, 10,$ so that we have 9 375 different situations. And for all of them we can extract the same conclusions.

\section{Real data example}

In this section we analyze a set of real data at the light of this new model selection tool based on RP. We consider the problem proposed in \cite{drsm81} and later studied in \cite{tokatr20}. The dependent variable $Y$ measures the heat evolved in calories per gram as a function of four ingredients: tricalcium aluminate ($X_1$), tricalcium silicate ($X_2$), tetracalcium alumino-ferrite ($X_3$) and dicalcium silicate ($X_4$). The data are given in Table \ref{tableEx1}. It is assumed that $Y$ can be written in terms of $X_1, X_2, X_3, X_4$ as a MLRM. We have considered the $RP_{NH}$ procedure to select the best model for different values of the tuning parameter.

\begin{table}[h]
\begin{center}
\begin{tabular}{|cccc|c|}
\hline $X_1$ & $X_2$ & $X_3$ & $X_4$ & $Y$ \\
\hline 7     & 26    & 6     & 60    & 78.5 \\
       1     & 29    & 15    & 52    & 74.3 \\
      11     & 56    & 8     & 20    & 104.3 \\
      11     & 31    & 8     & 47    & 87.6 \\
       7     & 52    & 6     & 33    & 95.9 \\
      11     & 55    & 9     & 22    & 109.2 \\
       3     & 71    & 17    & 6     & 102.7 \\
       1     & 31    & 22    & 44    & 72.5 \\
       2     & 54    & 18    & 22    & 93.1 \\
      21     & 47    & 4     & 26    & 115.9 \\
       1     & 40    & 23    & 34    & 83.8 \\
      11     & 66    & 9     & 12    & 113.3 \\
      10     & 68    & 8     & 12    & 109.4 \\
\hline
\end{tabular}
\caption{The Hald cement data.}
\label{tableEx1}
\end{center}
\end{table}

Considering different subsets of independent variables, we obtain 15 different multiple linear models and  the goal is to select the best one. However, it is known that at least two independent variables are needed because cement needs a combination of at least two reactants. Hence, we can remove the four simple linear regression models and we finally consider 11 possible models.

\begin{table}[h]
\begin{center}
\resizebox{\textwidth}{!}{
\begin{tabular}{|c|ccccc|}
\hline               & $RPNH_{0.01}$ & $RPNH_{0.02}$ & $RPNH_{0.04}$ & $RPNH_{0.05}$ & $RPNH_{0.07}$ \\
\hline    $X_1, X_2$ &  2.4179    &  2.3655        & 2.2642      & 2.2170      & 2.1280 \\
          $X_1, X_3$ &  3.8824    &  4.3871        & 4.1321      & 4.0138      & 3.7933 \\
          $X_1, X_4$ &  2.5408    &  2.4827        & 2.3738      & 2.3226      & 2.2261 \\
          $X_2, X_3$ &  3.3638    &  3.2836        & 3.1140      & 3.0349      & 2.8868 \\
          $X_2, X_4$ &  3.7164    &  3.8865        & 3.6579      & 3.5523      & 3.3565 \\
          $X_3, X_4$ &  2.9519    &  2.8785        & 2.7413      & 2.6771      & 2.5564 \\
$X_1, X_2, X_3$      & 2.4024     &  2.3493        & 2.2495      & 2.2026      & 2.1141 \\
$X_1, X_2, X_4$      & 2.4013     &  2.3484        & 2.2490      & 2.2023      & 2.1142 \\
$X_1, X_3, X_4$      & 2.4295     &  2.3759        & 2.2752      & 2.2279      & 2.1386 \\
$X_2, X_3, X_4$     &  2.6091     &  2.5490        & 2.4363      & 2.3834      & 2.2837 \\
$X_1, X_2, X_3, X_4$ & 2.4747     &  2.4197        & 2.3164      & 2.2679      & 2.1765 \\
\hline Best model    & $(X_1, X_2, X_4)$ & $(X_1, X_2, X_4)$ & $(X_1, X_2, X_4)$ & $(X_1, X_2, X_4)$ &$(X_1, X_2, X_3)$ \\
\hline               &                   &                   &                   &                   &                  \\
\hline               & $RPNH_{0.1}$ & $RPNH_{0.2}$ & $RPNH_{0.4}$ & $RPNH_{0.5}$ & $RPNH_{0.7}$ \\
\hline    $X_1, X_2$ &  2.0064    &  1.6822        & 1.2656      & 1.1249      & 0.9197 \\
          $X_1, X_3$ &  3.4984    &  2.7476        & 1.8439      & 1.5662      & 1.2019 \\
          $X_1, X_4$ &  2.0946    &  1.7454        & 1.3004      & 1.1517      & 0.9411 \\
          $X_2, X_3$ &  2.6873    &  2.1710        & 1.5474      & 1.3482      & 1.0702 \\
          $X_2, X_4$ &  3.0967    &  2.4472        & 1.7055      & 1.4771      & 1.1616 \\
          $X_3, X_4$ &  2.3930    &  1.9647        & 1.4322      & 1.2638      & 1.0347 \\
$X_1, X_2, X_3$      & 1.9933     &  1.6716        & 1.2598      & 1.1264      & 0.9173 \\
$X_1, X_2, X_4$      & 1.9939     &  1.6735        & 1.2623      & 1.1240      & 0.9228 \\
$X_1, X_3, X_4$      & 2.0167     &  1.6921        & 1.2756      & 1.1352      & 0.9985 \\
$X_2, X_3, X_4$     &  2.1482     &  1.7891        & 1.3340      & 1.1824      & 1.0089 \\
$X_1, X_2, X_3, X_4$ & 2.0521     &  1.7221        & 1.3014      & 1.1601      & 0.9548 \\
\hline Best model    & $(X_1, X_2, X_3)$ & $(X_1, X_2, X_3)$ & $(X_1, X_2, X_3)$ & $(X_1, X_2, X_4)$ &$(X_1, X_2, X_3)$ \\
\hline
\end{tabular}
}
\caption{Results for the Hald cement data.}
\label{tableEx1sol}
\end{center}
\end{table}

We have applied the $RP_{NH}$-criterion defined in (\ref{ali}) for different values of the tuning parameter to select the most appropriate model. The solution is given in Table \ref{tableEx1sol}. As it can be seen in this table, the combinations of $X_1, X_2, X_3$ and $X_1, X_2, X_4$ seem to be the best candidates, with tiny differences between them. These results are similar to the conclusions obtained in \cite{tokatr20}. Remark also the good performance of model $X_1, X_2.$

%\subsection{The cell survival data}
%
%This problem appears in Efron and Tibshirani (1993). A radiologist has run an experiment involving 14 bacterial plates. The plates are exposed to several doses of radiation ($X$) and the proportion of surviving cells ($Y$) is measured. The data are given in Table \ref{tableEx2}.
%
%{\small
%\begin{table}[h]
%\begin{center}
%\begin{tabular}{|c|cccccccccccccc|}
%\hline Plate    & $1$  & $2$  & $3$  & $4$  & $5$  & $6$  & $7$  & $8$  & $9$  & $10$ &  $11$  & $12$ & $13$  & $14$  \\
%\hline $X$      &1.175 &1.175 &2.350 &2.350 &4.700 &4.700 &4.700 &7.050 &7.050 &9.050 & 9.400  &9.400 &14.100 &14.100 \\
%\hline $log(Y)$ &-0.821&-0.598&-1.833&-2.040&-3.219&-3.219&-2.794&-5.298&-5.745&-6.812&-8.805  &-8.568&-4.962 & -9.721 \\
%\hline
%\end{tabular}
%\caption{The cell survival data.}
%\label{tableEx2}
%\end{center}
%\end{table}
%}
%
%Note that datum 13 seems to be an outlier. A model relating linearly $log(Y)$ and $X$ is proposed. In Efron and Tibshirani (1993) it is established that a linear model fits the data if datum 13 is removed, while a a quadratic model arises with this datum. After a bootstrap study of this problem, the conclusion is that a linear model is more adequate.
%
%\begin{table}[h]
%\begin{center}
%\begin{tabular}{|c|c|}
%\hline              & $p$ \\
%\hline $RPNH_{0.1}$ &
%\end{tabular}
%\caption{The best model for cell survival data.}
%\label{tableEx2Sol}
%\end{center}
%\end{table}

\section{Conclusions}

In this paper we have developed a new procedure for model selection for independent but not identically distributed observations aiming to compete with other methods based on maximum likelihood in terms of efficiency but being more robust agaisnt outlying data. For this purpose, we have considered RP, a tool that has proved itself to provide robust estimations in many statistical problems. We have developed a model selection criterion, the $RP_{NH}$-criterion, extending the well-known AIC. Besides, we have shown that the sample estimator is an unbiased estimator. Next, we have considered the case of having a restricted model and we have developed a procedure to decide whether the large model is more appropriate for modeling the available data. As an example of application, we have developed the MLRM when we aim to find the best model fitting a set of data. We have conducted a simulation study that shows that this new procedure works very well under contamination, i.e. simulations suggest that the procedure is robust. Besides, it seems that the cost in terms of efficiency is reduced. Finally, we have applied this new procedure in a situation with real data.

\section*{Acknowledgements}

This work was supported by the Spanish Grant PID2021-124933NB-I00.

%\bibliographystyle{unsrt}

%\bibliography{C:/Biblio/bibliography}

\end{document}